\documentclass[11pt]{article}

\setlength{\topmargin}{-0.5cm}
\setlength{\evensidemargin}{0.5cm}
\setlength{\oddsidemargin}{0.5cm}
\setlength{\textheight}{22cm}
\setlength{\textwidth}{15cm}

\usepackage{amssymb,amscd,amsthm,amsmath}
\usepackage[dvipdfm,colorlinks,breaklinks,
pdfstartview={FitH -32768},
bookmarks,bookmarksnumbered,bookmarksopen,
pdftitle={Weak Homotopy Equivalence Type Result},
pdfkeywords={2000 Mathematics Subject Classification: 46L55.},
pdfauthor={Masaki Izumi and Hiroki Matui}]{hyperref}

\newtheorem{theorem}{Theorem}[section]
\newtheorem{lemma}[theorem]{Lemma}
\newtheorem{proposition}[theorem]{Proposition}
\newtheorem{cor}[theorem]{Corollary}

\theoremstyle{definition}
\newtheorem{definition}[theorem]{Definition}
\newtheorem{example}[theorem]{Example}

\theoremstyle{remark}
\newtheorem{remark}[theorem]{Remark}

\theoremstyle{conjecture}

\theoremstyle{problem}
\newtheorem{problem}[theorem]{Problem}

\numberwithin{equation}{section}

\newcommand{\C}{\mathbb{C}}

\newcommand{\K}{\mathbb{K}}

\newcommand{\N}{\mathbb{N}}

\newcommand{\T}{\mathbb{T}}
\newcommand{\Z}{\mathbb{Z}}

\newcommand{\cO}{\mathcal{O}}

\newcommand{\cN}{\mathcal{N}}

\newcommand\Aut{\operatorname{Aut}}

\newcommand\id{\operatorname{id}}
\newcommand\Ad{\operatorname{Ad}}
\newcommand\Hom{\operatorname{Hom}}
\newcommand\Map{\operatorname{Map}}

\newcommand\Pext{\operatorname{Pext}}

\newcommand{\Lip}{\operatorname{Lip}}

\title{A weak homotopy equivalence type result\\
related to Kirchberg algebras}

\author{Masaki Izumi
\thanks{Supported in part by JSPS KAKENHI Grant Number JP15H03623}\\
Graduate School of Science \\
Kyoto University \\
Sakyo-ku, Kyoto 606-8502, Japan 
\and
Hiroki Matui 
\thanks{Supported in part by JSPS KAKENHI Grant Number JP18K03321} \\
Graduate School of Science \\
Chiba University \\
Inage-ku, Chiba 263-8522, Japan}

\begin{document} 
\maketitle
\begin{abstract} We obtain a weak homotopy equivalence type result between two topological groups associated with 
a Kirchberg algebra: the unitary group of the continuous asymptotic centralizer and the loop group of the automorphism group 
of the stabilization. 
This result plays a crucial role in our subsequent work on the classification of poly-$\mathbb{Z}$ group actions on Kirchberg algebras. 
As a special case, we show that the $K$-groups of the continuous asymptotic centralizer are isomorphic to 
the $KK$-groups of the Kirchberg algebra. 
\end{abstract}
\section{Introduction} 
Purely infinite separable simple nuclear $C^*$-algebras, now called Kirchberg algebras, form one of the most prominent 
classes of classifiable $C^*$-algebras, and the celebrated Kirchberg-Phillips classification theorem says that 
their Morita equivalence classes are completely determined by $KK$-equivalence (see \cite{P00}, \cite{R2002}). 
Based on the classification theorem, Dadarlat \cite{D} identified the homotopy groups of their automorphism groups 
with relevant $KK$-groups, which led to subsequent work of Dadarlat-Pennig \cite{DP15}, \cite{DP-II}, \cite{DP-I} 
on novel and beautiful interplay between algebraic topology and the classification theory of $C^*$-algebras. 
In this paper we further investigate topological features of the automorphism groups of Kirchberg algebras, 
motivated by our ongoing project of the classification of discrete amenable group actions on Kirchberg algebras 
\cite{IM2010}, \cite{IM1}, \cite{IM2}. 

For two topological spaces $X$ and $Y$, a continuous map $f:X\to Y$ is said to be a weak homotopy equivalence 
if it induces isomorphisms of the homotopy groups $f_*:\pi_i(X,x)\to \pi_i(Y,f(x))$ for any $i$ and for any $x\in X$. 
The main purpose of this paper is to show a weak homotopy equivalence type result between the two topological groups 
associated with a unital Kirchberg algebra $A$: 
the unitary group $U(A_\flat)$ of the continuous asymptotic centralizer algebra 
$$A_\flat:=(C^b([0,1),A)/C_0([0,1),A))\cap A',$$
and the loop group $\Omega\Aut(A\otimes \K)$ of the automorphism group of the stabilization $A\otimes \K$ of $A$. 
More precisely, for a compact metrizable space $X$ we construct a group homomorphism 
$$\Pi_{A,X}:[X,U(A_\flat)]\to [X,\Omega \Aut(A\otimes \K)],$$
which is natural in $X$, and is an isomorphism if $X$ is the sphere $S^i$ (Theorem \ref{main}).  
Our construction does not show that the map $\Pi_{A,X}$ is induced by a continuous map, and 
we have not established a genuine weak homotopy equivalence. 

Our main result Theorem \ref{main} is essential in our analysis \cite{IM2} of poly-$\Z$ group actions on Kirchberg algebras. 
Generally speaking, it is much easier to handle unitary groups than automorphism groups for $C^*$-algebras;
the natural topology of the unitary group of a unital $C^*$-algebra is given by the operator norm, which allows simple 
perturbation arguments via functional calculus, while the point norm topology of the automorphism group is much less flexible. 
Thanks to Theorem \ref{main}, we can transfer topological information of $\Aut(A\otimes \K)$ to that of 
$U(A_\flat)$, and perform analysis on the latter. 

After constructing the map $\Pi_{A,X}$ and stating the main result and its direct consequences in Section 2, 
we prove Theorem \ref{main} in Section 3. 
The first step is to show that the $K$-groups $K_*(A_\flat)$ of $A_\flat$ are isomorphic to the $KK$-groups $KK(A,S^*A)$ 
for $*=0,1$, by using asymptotic morphisms arising from projections and unitaries in $A_\flat$. 
Probably this kind of idea goes back to Kirchberg's work \cite{Kir}, and has been shared among specialists as a folklore. 
In Section 4, we apply the same idea to the central sequence algebras to compute their $K$-groups too. 

Nakamura's homotopy theorem \cite[Theorem 7]{N} is one of the most powerful technical tools when we work on the asymptotic centralizers 
of Kirchberg algebras. 
In Appendix, we prove a refinement of it, Theorem \ref{refinedNakamura}, which is used in the proof of our main result as well as 
in our companion papers \cite{IM1}, \cite{IM2}.

\bigskip

\textbf{Acknowledgement. }
Masaki Izumi would like to thank 
Isaac Newton Institute for Mathematical Sciences 
for its hospitality. 
Masaki Izumi and Hiroki Matui thank the referee 
for careful reading and for giving several useful comments 
that made this manuscript much more accessible to readers.

\section{Construction of $\Pi_{A,X}$} 
For a unital $C^*$-algebra $B$, we denote by $U(B)$ the unitary group of $B$. 
When $B$ is non-unital, we define $U(B)$ to be the set of unitaries $u$ in the unitization of $B$ 
satisfying $u-1\in B$. 
We denote by $\K$ the set of compact operators on a separable infinite dimensional Hilbert space. 
The automorphism group $\Aut(B\otimes \K)$ of $B\otimes \K$ is a topological group 
equipped with the point norm topology, and it is a Polish group if $B$ is separable.  
We set 
$$B^\infty=\ell^\infty(\N,B)/c_0(\N,B).$$
Then $B$ is embedded into $B^\infty$ as the image of the constant sequences, and we denote $B_\infty=B^\infty\cap B'$. 
For a continuous version, we use the following shorthand notation: 
$$C^bB=C^b([0,1),B),\quad C_0B=C_0([0,1),B),$$ 
$$B^\flat=C^bB/C_0B,\quad B_\flat=B^\flat\cap B'.$$ 
We denote by $q_B$ the quotient map $q_B:C^bB\to B^\flat$, 
though we often omit it if there is no possibility of confusion. 
We denote by $SB$ the suspension of $B$, that is, $SB=B\otimes C_0(0,1)$. 

A based space $(X,x_0)$ is a topological space $X$ with a fixed base point $x_0\in X$. 
When $X$ is a compact Hausdorff space, we denote 
$C(X,x_0)=C_0(X\setminus \{x_0\})$, which is the set of continuous functions on $X$ vanishing at $x_0$. 
We denote by $I$ the closed unit interval $[0,1]$, and the circle $S^1$ is often identified with $I/\partial I$. 
The reduced suspension $\Sigma X$ is defined by  
$$X\times I/((X\times \partial I)\cup (\{x_0\}\times I)),$$
whose base point is $\Sigma x_0=(X\times \partial I)\cup (\{x_0\}\times I)\in \Sigma X$. 
Then we can identify $C(\Sigma X,\Sigma x_0)$ with $SC(X,x_0)$. 
A topological group is always regarded as a based space with a base point $e$, the neutral element.  

For based spaces $(X,x_0)$ and $(Y,y_0)$, a based map $f:X\to Y$ is a continuous map satisfying $f(x_0)=y_0$. 
We denote by $\Map(X,Y)$ the set of based maps from $X$ to $Y$, and by $[X,Y]$ the set of based homotopy classes 
of based maps from $X$ to $Y$. 
To introduce topology in $\Map(X,Y)$, we follow the standard procedure in algebraic topology (see 
\cite[Chapter 6]{DK} for example). 
In this paper we treat only the case where $X$ is a compact metrizable space and $Y$ is a metrizable space, 
and the topology of $\Map(X,Y)$ in this case is nothing but the usual compact open topology. 

We denote by $\Omega X$ the based loop space of $X$, that is, 
$$\Omega X=\{f\in \Map(I,X);\; f(0)=f(1)=x_0\},$$
whose base point is a constant loop. 

Let $(X,x_0)$ be a pointed compact Hausdorff space, and let $B$ be a unital $C^*$-algebra. 
Then $f\in \Map(X,U(B))$ is regarded as an element of $U(C(X,B))$ whose image under the 
evaluation map at $x_0$ is $1_B$, the unit of $B$. 
Thus the map associating the $K_1$-class of $f$ in $K_1(C(X)\otimes B)$ to the homotopy class $[f]$ of $f$ in $[X,U(B)]$ is 
a natural homomorphism from $[X,U(B)]$ to $K_1(C(X,x_0)\otimes B)$. 
The next lemma follows from \cite[Lemma 2.4]{BRR}. 

\begin{lemma}\label{identification} Let $B$ be a unital $C^*$-algebra satisfying the following condition: 
for any separable $C^*$-subalgebra $D\subset B$ there exists a unital embedding of the Cuntz algebra $\cO_\infty$ into $B^\infty\cap D'$. 
Then for any based compact metric space $(X,x_0)$,   
the natural map from $[X,U(B)]$ to $K_1(C(X,x_0)\otimes B)$ is an isomorphism. 
\end{lemma}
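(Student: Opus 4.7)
The plan is to deduce the statement from \cite[Lemma 2.4]{BRR}, which establishes the analogous isomorphism for unital $C^*$-algebras equipped with sufficient $\cO_\infty$-absorption. The central-sequence hypothesis here, namely that $\cO_\infty$ unitally embeds into $B^\infty \cap D'$ for every separable $D \subset B$, is precisely what is needed to realize every class of $K_1(C(X,x_0) \otimes B)$ by a unitary already lying in the unitization of $C(X,x_0) \otimes B$ itself (rather than in a matrix algebra over it), and to run the parallel compression at the level of homotopies.

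For surjectivity, any class in $K_1(C(X,x_0) \otimes B)$ is represented by a unitary $u \in M_n((C(X,x_0) \otimes B)^\sim)$ with $u(x_0) = 1_n$. Applying the hypothesis to the separable subalgebra $D \subset B$ generated by the matrix entries of $u$ yields isometries $s_1, \dots, s_n$ in $B^\infty$ with $\sum_i s_i s_i^* = 1$ and commuting with $D$; a standard selection argument then promotes these to isometries in $B$ almost commuting with the image of $u$ and with $\sum_i s_i s_i^*$ close to $1$. Setting $S = (s_1, \dots, s_n)$ as a row operator, the conjugate $v = S u S^*$ lies (up to small perturbation) in $(C(X,x_0) \otimes B)^\sim$ and represents $[u]$; since $u(x_0) = 1_n$ one obtains $v(x_0) = 1$, so $v$ corresponds to a based map $X \to U(B)$ realizing the given class. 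Injectivity follows by applying the same compression to a unitary in $M_n((C(X,x_0) \otimes C(I) \otimes B)^\sim)$ witnessing a $K_1$-homotopy between two based maps $f_0, f_1 \colon X \to U(B)$; the compressed path provides a based homotopy from $f_0$ to $f_1$.

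The main obstacle is that the isometries one obtains from the central-sequence hypothesis only approximately commute with the finite data they are chosen for, whereas the compression must ultimately produce an honest unitary in $C(X,x_0) \otimes B$. Compactness of $X$ together with continuity of $u$ reduces the required commutation to finitely many matrix entries evaluated at finitely many points, and the resulting compressed operator need only be corrected by a small unitary perturbation to land exactly in the target algebra without changing its $K_1$-class. This perturbation argument, together with the verification that the above construction respects homotopies on both sides, is the substance of \cite[Lemma 2.4]{BRR}, and the central-sequence form of our hypothesis is exactly what is needed to invoke it.
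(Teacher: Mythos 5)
Your proposal is essentially the paper's own proof: the paper disposes of this lemma by simply quoting \cite[Lemma 2.4]{BRR}, whose hypothesis is exactly the central-sequence $\cO_\infty$-embedding condition stated here, and you do the same while sketching the compression argument that underlies that citation. The minor imprecisions in your sketch (the $\cO_\infty$-isometries only give $\sum_i s_is_i^*\leq 1$, so one must add $1-\sum_i s_is_i^*$ to get a unitary, and connecting a unitary to its compression within $U(B)$ is the genuinely nontrivial step, not a small perturbation) are immaterial, since you defer precisely those points to \cite[Lemma 2.4]{BRR}, just as the paper does.
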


In what follows, we assume that $A$ is a unital Kirchberg algebra and $(X,x_0)$ is a based compact metric space.  
Then we can apply the above lemma to $B=A_\flat$ (see \cite[Lemma 5.3]{IM1} for example), and we identify 
$[X,U(A_\flat)]$ with $K_1(C(X,x_0)\otimes A_\flat)$. 
In particular, we have $\pi_0(U(A_\flat))=K_1(A_\flat)$, and for $n>0$, 
$$\pi_n(U(A_\flat))\cong \left\{
\begin{array}{ll}
K_0(A_\flat) , &\quad \textrm{if $n$ is odd}, \\
K_1(A_\flat) , &\quad \textrm{if $n$ is even},
\end{array}
\right.
$$ by Bott periodicity.   

We now construct a group homomorphism $\Pi_{A,X}:[X,U(A_\flat)]\to [X,\Omega\Aut(A\otimes \K)]$.

\begin{lemma}\label{lift} Let $A$ be a unital $C^*$-algebra, let $(X,x_0)$ be a based compact Hausdorff space,  
and let $u\in \Map(X,U(A_\flat))$. 
\begin{itemize}
\item [$(1)$] There exists a lifting $\tilde{u}\in \Map(X,U(C^bA))$ satisfying $q_A(\tilde{u}(x))=u(x)$ for all $x\in X$. 
\item [$(2)$] Treating $\tilde{u}$ in $(1)$ as a function of two variables $(x,t)\in X\times [0,1)$, we have 
$$\lim_{t\to1}\sup_{x\in X}\|a\tilde{u}(x,t)-\tilde{u}(x,t)a\|=0,\quad \forall a\in A.$$
\item [$(3)$] If $\tilde{u}'\in \Map(X,U(C^bA))$ is another lifting of $u$, then 
$$\lim_{t\to1}\sup_{x\in X}\|\tilde{u}(x,t)-\tilde{u}'(x,t)\|=0.$$
In particular $u$ determines the unique element $q_{C(X,A)}(\tilde{u})\in C(X,A)^\flat\cap A'$. 
\end{itemize}
\end{lemma}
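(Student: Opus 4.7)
Strategy: All three parts rest on a single identification. Since $X$ is compact metric and $C(X)$ is nuclear, tensoring the exact sequence $0\to C_0A\to C^bA\to A^\flat\to 0$ by $C(X)$ preserves exactness, so $C(X,A^\flat)\cong C(X,C^bA)/C(X,C_0A)$. Moreover, a straightforward compactness/uniform-continuity argument shows that $C(X,C_0A)$ is exactly the set of $g\in C(X,C^bA)$ with $\sup_{x\in X}\|g(x,t)\|\to 0$ as $t\to 1$. This characterization drives the uniform estimates in (2) and (3).

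For (1), view $u$ as a unitary in $C(X,A^\flat)$ with $u(x_0)=1$. By surjectivity of $C(X,C^bA)\to C(X,A^\flat)$, lift $u$ (only as an element, not yet as a unitary) to some $a\in C(X,C^bA)$. Since $q_A(a(x))$ is unitary for every $x$, $a^*a-1\in C(X,C_0A)$, so $\sup_{x\in X}\|a(x,t)^*a(x,t)-1\|\to 0$. Choose $t_0<1$ so that $a(x,t)^*a(x,t)$ is uniformly invertible on $X\times[t_0,1)$, and define
\[
\tilde u(x,t)=a(x,t)\bigl(a(x,t)^*a(x,t)\bigr)^{-1/2}
\]
for $t\in[t_0,1)$, extended by $\tilde u(x,t)=\tilde u(x,t_0)$ on $[0,t_0]$. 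This is a continuous map $X\to U(C^bA)$ lifting $u$. To restore the basepoint condition, replace $\tilde u$ by $x\mapsto\tilde u(x)\tilde u(x_0)^{-1}$; since $u(x_0)=1$, this does not alter $q_A\circ\tilde u$.

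Parts (2) and (3) then follow directly from the characterization of $C(X,C_0A)$. For (2), since $u(x)\in A_\flat\subseteq A'$, each fixed $a\in A$, viewed as a constant element of $C(X,C^bA)$, satisfies $q_A([\tilde u,a])=[u,a]=0$ in $C(X,A^\flat)$; hence $[\tilde u,a]\in C(X,C_0A)$, giving $\sup_{x}\|[\tilde u(x,t),a]\|\to 0$. For (3), if $\tilde u'$ is a second lift, then $\tilde u-\tilde u'\in C(X,C^bA)$ maps to $0$ in $C(X,A^\flat)$, so it lies in $C(X,C_0A)$, yielding the desired uniform bound.

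The main obstacle is really just part (1), namely producing the initial continuous lift $a$ of $u$. Once the identification $C(X,A^\flat)=C(X,C^bA)/C(X,C_0A)$ is in hand, this is immediate from surjectivity of the quotient map; alternatively one could invoke a Bartle--Graves continuous cross-section of the Banach-space surjection $q_A:C^bA\to A^\flat$ and apply it pointwise. After that, the polar decomposition trick and the uniform-convergence characterization of $C(X,C_0A)$ dispose of the remaining assertions without incident.
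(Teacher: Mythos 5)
Your proposal is correct and follows essentially the same route as the paper: produce a continuous lift (the paper uses Bartle--Graves; your primary route via exactness of $0\to C(X,C_0A)\to C(X,C^bA)\to C(X,A^\flat)\to 0$, with Bartle--Graves as fallback, is an equivalent repackaging), repair it to a unitary by polar decomposition on $X\times[t_0,1)$ using compactness of $X$, fix the base point by right multiplication with $\tilde u(x_0)^*$, and deduce (2) and (3) from the fact that elements of $C(X,C_0A)$ vanish uniformly in $x$ as $t\to1$ --- which is exactly the content of the paper's equicontinuity argument. One small repair in (1): invertibility of $a^*a$ alone only makes $a(a^*a)^{-1/2}$ an isometry; you also need $aa^*$ uniformly invertible near $t=1$ (equivalently, $a(x,t)$ invertible) to get a unitary, and this follows by the identical argument since $q_A(a(x))q_A(a(x))^*=1$ gives $aa^*-1\in C(X,C_0A)$ as well. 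Also note the lemma only assumes $X$ compact Hausdorff, not metric, but nothing in your argument uses metrizability.
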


\begin{proof} (1) 
Thanks to the Bartle-Graves selection theorem \cite{BG}, there exists a continuous lifting $\tilde{u}\in C(X,C^bA)$ 
with $q_A(\tilde{u}(x))=u(x)$ for all $x\in X$. 
We treat $\tilde{u}$ as a function with two variables $(x,t)\in X\times [0,1)$. 

For each $x\in X$, we have 
$$\lim_{t\to 1}\|\tilde{u}(x,t)^*\tilde{u}(x,t)-1\|=\lim_{t\to 1} \|\tilde{u}(x,t)\tilde{u}(x,t)^*-1\|=0,$$
and there exits $t_x\in [0,1)$ satisfying  
$$\sup_{t\in [t_x,1)}\|\tilde{u}(x,t)^*\tilde{u}(x,t)-1\|<1/2,\quad \sup_{t\in [t_x,1)}\|\tilde{u}(x,t)\tilde{u}(x,t)^*-1\|<1/2.$$
Since  
$$\lim_{z\to x}\sup_{t\in [0,1)}\|\tilde{u}(z,t)-\tilde{u}(x,t)\|=0,$$
there exists a neighbourhood $U_x$ of $x$ in $X$ satisfying 
$$\|\tilde{u}(y,t)^*\tilde{u}(y,t)-1\|<1/2,\quad \|\tilde{u}(y,t)\tilde{u}(y,t)^*-1\|<1/2,$$
for all $y\in U_x$ and all $t_x \leq t<1$.  
Since $X$ is compact, this implies that there exists $0 \leq t_0<1$ satisfying
$$\|\tilde{u}(x,t)^*\tilde{u}(x,t)-1\|<1/2,\quad \|\tilde{u}(x,t)\tilde{u}(x,t)^*-1\|<1/2,$$
for all $t_0\leq t<1$ and all $x\in X$. 
By replacing $\tilde{u}(x,t)$ with $\tilde{u}(x,t)|\tilde{u}(x,t)|^{-1}$ for $t_0\leq t<1$, $x\in X$, 
we may assume $\tilde{u}\in U(C(X,C^bA))$. 
Since $q_A(\tilde{u}(x_0,\cdot))=u(x_0)=1$, we have 
$$\lim_{t\to1}\|\tilde{u}(x_0,t)-1\|=0.$$
Thus by replacing $\tilde{u}(x,t)$ with  $\tilde{u}(x,t)\tilde{u}(x_0,t)^*$, we obtain the desired lifting of $u$.

(2) For $a\in A$ we set $f_t(x):=\|a\tilde{u}(x,t)-\tilde{u}(x,t)a\|$. 
Since $a\tilde{u}-\tilde{u}a\in C(X,C^b A)$ and $q_A(a\tilde{u}(x)-\tilde{u}(x)a)=0$, 
the family $\{f_t\}_{t\in [0,1)}$ of continuous functions on $X$ is equicontinuous, which converges to 0 pointwisely. 
Thus the convergence is uniform, and we get the statement.  

(3) The statement follows from a similar argument as in the proof of (2). 
\end{proof}

Since the unitary group $U(M(A\otimes \K))$ of the stable multiplier algebra $M(A\otimes \K)$ is contractible in norm topology 
(see \cite[Theorem 16.8]{W-O}), 
we can choose a norm continuous map $U$ from $X\times [0,1)$ to $U(M(A\otimes \K))$ satisfying 
$U(x,t)=\tilde{u}(x,t)\otimes 1_{\K}$ for all $1/2\leq t<1$, $x\in X$, $U(x_0,t)=1$ for all $t\in [0,1)$, 
and $U(x,0)=1$ for all $x\in X$. 
For $a\in A\otimes \K$, we set  
$$\rho^u_{(x,t)}(a)=U(x,t)aU(x,t)^*,$$ 
for $x\in X$, $t\in [0,1)$, and  
we set $\rho^u_{(x,1)}(a)=a$ for $x\in X$. 
Thanks to Lemma \ref{lift},(2), we see that $\rho^u$ belongs to 
$\Map(\Sigma X, \Aut(A\otimes \K))$. 
We often identify the two spaces $\Map(\Sigma X,\Aut(A\otimes \K))$ and $\Map(X,\Omega\Aut(A\otimes \K))$.

\begin{lemma} \label{well-defined}
The homotopy class $[\rho^u]\in [X,\Omega\Aut(A\otimes \K)]$ does not depend on 
either the choice of $\tilde{u}$ or that of $U$, and it depends only on the homotopy class 
$[u]\in [X,U(A_\flat)]$. 
\end{lemma}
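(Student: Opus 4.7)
The proof reduces to three independence statements: (i) $[\rho^u]$ does not depend on $U$ when $\tilde{u}$ is fixed; (ii) it does not depend on $\tilde{u}$ when $u$ is fixed; (iii) it is invariant under based homotopies of $u$. The common tool throughout is the norm-contractibility of $U(M(A\otimes\K))$ that was already invoked in the construction of $U$. For (i), if $U^0, U^1$ are two admissible choices for the same $\tilde{u}$, they agree on the closed subset $K := (X\times\{0\}) \cup (\{x_0\}\times [0,1)) \cup (X\times [1/2,1))$, so $V := U^1 (U^0)^*$ is a norm-continuous map $X\times [0,1/2]\to U(M(A\otimes\K))$ equal to $1$ on $K\cap(X\times [0,1/2])$. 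By norm-contractibility of the target the space of such maps is path-connected, so there is a norm-continuous path $V_s$ from $V_0 = 1$ to $V_1 = V$ with each $V_s$ equal to $1$ on the same boundary subset. Then $U^s := V_s U^0$ on $X\times [0,1/2]$, extended by $\tilde{u}\otimes 1_\K$ on $X\times [1/2,1)$, defines a continuous family of admissible choices, and $\Ad U^s$ supplies the required homotopy in $\Map(\Sigma X,\Aut(A\otimes\K))$.

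For (ii), given two liftings $\tilde{u}_0,\tilde{u}_1$ with corresponding $U_0,U_1$, step (i) allows us to replace $U_1$ by $WU_0$, where $W$ on $X\times [1/2,1)$ equals $(\tilde{u}_1\tilde{u}_0^*)\otimes 1_\K$ and is extended to a norm-continuous map on $X\times [0,1)$ with $W(x,0) = W(x_0,t) = 1$. The key input is Lemma \ref{lift}(3): $\|\tilde{u}_1(x,t)\tilde{u}_0(x,t)^*-1\|\to 0$ uniformly in $x$ as $t\to 1$, so $W$ extends continuously by $1$ across $t = 1$ and thereby descends to a based continuous map $\Sigma X\to U(M(A\otimes\K))$. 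Contractibility of the target yields a based null-homotopy $W_s$ from $1$ to $W$. Setting $U_s := W_s U_0$, each $U_s$ is an admissible choice: the conditions $U_s(x,0) = U_s(x_0,t) = 1$ follow since $W_s$ equals $1$ at the basepoint of $\Sigma X$, and norm-continuity on $X\times [0,1)$ is clear. The extension of $\Ad U_s$ to $\id$ at $t = 1$ follows from the estimate $\|aU_s - U_s a\|\leq 2\|a\|\,\|W_s - 1\| + \|aU_0 - U_0 a\|$ together with joint continuity of $W_\bullet$ on the compact space $\Sigma X\times I$, which forces $\|W_s(x,t) - 1\|\to 0$ uniformly in $(x,s)$ as $t\to 1$, while the second term vanishes uniformly by Lemma \ref{lift}(2) applied to $U_0$.

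For (iii), given a based homotopy $u\in \Map(X\times I, U(A_\flat))$ between $u_0$ and $u_1$, apply Lemma \ref{lift}(1) to the compact metric space $X\times I$ to obtain a joint lifting $\tilde{u}:X\times I\to U(C^bA)$ (after normalizing via multiplication by $\tilde{u}(x_0,s,\cdot)^*$ so that $\tilde{u}(x_0,s,t) = 1$ for all $s,t$); then construct the corresponding multiplier unitary $U$ on $X\times I\times [0,1)$ in the same manner. The resulting family $(\rho^{u_s})_{s\in I}$ is a continuous map into $\Map(\Sigma X, \Aut(A\otimes\K))$ whose endpoints, by (i) and (ii), represent the original classes $[\rho^{u_0}]$ and $[\rho^{u_1}]$, completing the proof. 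The main technical difficulty throughout is the mismatch between the norm topology on $U(M(A\otimes\K))$ and the point-norm topology on $\Aut(A\otimes\K)$: at every stage one must verify that the homotopies constructed via contractibility preserve the continuity of $\Ad U$ at $t = 1$, and this is handled uniformly by parts (2) and (3) of Lemma \ref{lift} together with compactness of the relevant parameter spaces.
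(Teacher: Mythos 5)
Your proof is correct and follows essentially the same route as the paper: the discrepancy unitary between two choices extends, via Lemma \ref{lift}(2),(3), to a based norm-continuous map $\Sigma X\to U(M(A\otimes\K))$, and contractibility of $U(M(A\otimes\K))$ together with continuity of $\Ad$ kills it, while homotopy invariance is obtained by running the construction over $X\times I$. The only difference is organizational: you separate independence of $U$ and of $\tilde{u}$ into two steps, whereas the paper treats both at once by writing ${\rho^u}'=\Ad V\circ\rho^u$ with $V=U'U^*$.
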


\begin{proof} 
Suppose that we make different choices and get $\tilde{u}'$, $U'$ and ${\rho^u}'$ instead of 
$u$, $U$ and $\rho^u$ respectively. 
Let $V(x,t)=U'(x,t)U(x,t)^*$, which is a continuous map from $X\times [0,1)$ to 
$U(M(A\otimes \K))$ equipped with the norm topology. 
Then we have $V(x,0)=1$ and 
$$\sup_{x\in X}\|V(x,t)-1\|=\sup_{x\in X}\|\tilde{u}(x,t)-\tilde{u}'(x,t)\|$$
for $1/2\leq t<1$. 
Thanks to Lemma \ref{lift},(3), we may regard $V$ as an element in 
$$\Map(\Sigma X,U(M(A\otimes \K)),$$ 
and ${\rho^u}'=\Ad V\circ \rho^u$. 
Since $U(M(A\otimes \K))$ is contractible, the two elements $\rho^u$ and ${\rho^u}'$ are homotopic in 
$\Map(\Sigma X, \Aut(A\otimes \K))$, and the map 
$$\Map(X, U(A_\flat))\ni u\mapsto [\rho^u]\in [\Sigma X,\Aut(A\otimes \K)],$$ 
does not depend on either the choice of $\tilde{u}$ or that of $U$. 

Repeating the same argument with $[0,1]\times X$ instead of $X$, we see that the homotopy class 
$[\rho^u]$ depends only on the homotopy class $[u]\in [X,U(A_\flat)]$. 
\end{proof}

\begin{definition} Let $A$ be a unital Kirchberg algebra, and let $(X,x_0)$ be a pointed compact metrizable space. 
We denote by $\Pi_{A,X}$ the map 
$$\Pi_{A,X}:[X,U(A_\flat)]\ni [u]\mapsto [\rho^u]\in [X,\Omega\Aut(A\otimes \K)],$$
constructed as above. 
When $X$ is the sphere $S^n$ for $n=0,1,\ldots,$ we denote $\Pi_{A,n}=\Pi_{A,S^n}$, which is a map  
from $\pi_n(U(A_\flat))$ to $\pi_n(\Omega\Aut(A\otimes \K))=\pi_{n+1}(\Aut(A\otimes \K))$. 
\end{definition}

\begin{remark}
We can immediately see from the construction that $\Pi_{A,X}$ is a group homomorphism, and it is natural in $X$. 
\end{remark}

Now we state the main theorem of this paper. 

\begin{theorem}\label{main} Let $A$ be a unital Kirchberg algebra. 
Then $\Pi_{A,n}$ is an isomorphism from $\pi_n(U(A_\flat))$ to $\pi_n(\Omega\Aut(A\otimes \K))=\pi_{n+1}(\Aut(A\otimes \K))$ 
for any non-negative integer $n$. 
\end{theorem}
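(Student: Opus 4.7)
My plan is to reduce both sides of $\Pi_{A,n}$ to $KK$-theoretic invariants of $A$, and then verify that $\Pi_{A,n}$ corresponds, under these identifications, to a canonical isomorphism between them. For the source, Lemma~\ref{identification} applied to $X=S^n$ together with Bott periodicity identifies $\pi_n(U(A_\flat))$ with $K_1(A_\flat)$ or $K_0(A_\flat)$ according to the parity of $n$. I would then carry out the identification $K_*(A_\flat)\cong KK(A,S^*A)$ flagged in the introduction as the first step of Section~3: a projection (respectively unitary) in $A_\flat$ lifts to an asymptotically central path $\tilde p(t)$ (respectively $\tilde u(t)$) in $C^bA$, and such a path defines an asymptotic morphism out of $A$ whose Connes--Higson $E$-theory class represents an element of $KK(A,A)$ (respectively $KK(A,SA)$).

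For the target, I would invoke Dadarlat's computation \cite{D}, which for a Kirchberg algebra $A$ identifies $\pi_{n+1}(\Aut(A\otimes\K))$ with an appropriate $KK$-group of $A$; modulo Bott periodicity, this matches the source. It then remains to show that $\Pi_{A,n}$ realizes the natural isomorphism between the two $KK$-descriptions. The geometric heart of the matter is that the loop $t\mapsto\Ad U(\cdot,t)$, viewed through its mapping torus construction as a $C^*$-algebra over $S^1$ with fiber $A\otimes\K$, packages the same asymptotic morphism data out of $A$ that was used to build the $K_*(A_\flat)$-class from $\tilde u$. I would make this precise first in the base cases $n=0$ and $n=1$, treating $\Pi_{A,0}$ explicitly on $K_1$-classes represented by single unitaries in $A_\flat$ and $\Pi_{A,1}$ on $K_0$-classes represented by loops of unitaries, and then propagate to all $n$ using naturality of $\Pi_{A,X}$ in $X$ together with Bott periodicity.

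The main obstacle is establishing the surjectivity and injectivity of $\Pi_{A,n}$ after the abstract identification of groups. Surjectivity asks that every loop in $\Aut(A\otimes\K)$ be homotopic to one arising from an asymptotically central unitary path in $C^bA$ via the construction of $\rho^u$, while injectivity asks that a null-homotopy of $\rho^u$ be lifted back to a null-homotopy of unitaries in $A_\flat$. Both depend crucially on the refined Nakamura homotopy theorem~\ref{refinedNakamura} proved in the Appendix, which converts automorphism-level homotopies into unitary-level ones in the asymptotic centralizer. Checking in detail that these lifts are compatible with the $KK$-dictionary on both sides --- in particular that the passage to the stable multiplier algebra $M(A\otimes\K)$ does not spoil the identification with the asymptotic morphism arising from $\tilde u$ --- is where the subtleties enter, and this is the step I would expect to require the most care.
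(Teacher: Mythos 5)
Your overall outline --- compose $\Pi_{A,n}$ with Dadarlat's $\overline{\chi}$, establish low-degree base cases, and bootstrap by Bott periodicity --- is indeed the paper's strategy, but two of the steps you lean on would not go through as described. First, surjectivity in the base case: you assert that both injectivity and surjectivity of $\Pi_{A,0}$ rest on the refined Nakamura theorem (Theorem \ref{refinedNakamura}). That theorem converts approximately central paths of unitaries into paths with controlled Lipschitz constant, which is exactly what the \emph{injectivity} argument needs after the Kirchberg--Phillips theorem produces an asymptotic unitary equivalence between $\psi^u$ and $j\oplus 0$; it cannot, however, manufacture from a prescribed class $x\in KK(A,SA)$ an asymptotically central unitary realizing it. The paper's surjectivity argument has a completely different engine: every $x\in KK(A,SA)$ is realized as the invariant $\Phi(\alpha)$ of a $\Z^2$-action with trivial $KK$-data (\cite[Proposition 8.4]{IM2010}), and the unitary $u(t)=v(t)^*\alpha_{(0,1)}(v(t))$ is extracted from an approximate innerness path for $\alpha_{(1,0)}$. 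Without an existence input of this kind, your surjectivity step is empty.

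Second, the inductive step: Bott periodicity on $[X,U(A_\flat)]\cong K_1(C(X,x_0)\otimes A_\flat)$ is a $K$-theoretic operation, not one induced by any based map $\Sigma^2X\to X$, so ``naturality of $\Pi_{A,X}$ in $X$ together with Bott periodicity'' does not yield that $\Pi$ intertwines the two Bott maps. This compatibility is precisely Lemma \ref{Bott}, and its proof requires real work: the explicit asymptotic-morphism formula $\overline{\chi}\circ\Pi_X([u])=KK(\psi^u)-KK(j_{\Sigma X})$ of Theorem \ref{KK} (your ``mapping torus'' gloss is not a substitute for the homotopy $\Psi_t$ constructed there), plus a version of the whole construction over $C(S^2)\otimes A$ as in Remark \ref{generalization}. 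Relatedly, your plan to treat $n=1$ directly as a second base case is left unsubstantiated, and the paper deliberately avoids it: instead of analyzing loops of unitaries it proves the projection-level isomorphism $K_0(A_\flat)\cong KK(A,A)$ (Lemma \ref{K0}) and checks the Bott square at $n=-1$ by the elementary identity $P(s,u_p(x))=e_B(s,x)\otimes p+e_{11}\otimes(1-p)$, which is far easier than an injectivity/surjectivity analysis in degree one. As it stands, then, the proposal identifies the right target groups but omits the mechanisms (the $\Z^2$-action realization, Theorem \ref{KK}, and Lemma \ref{Bott}) that make the comparison work.
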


\begin{remark}\label{Dadarlat}
Dadarlat \cite[Corollary 5.11]{D} showed that $\pi_{n+1}(\Aut(A\otimes \K))$ is isomorphic to $KK(A,S^{n+1}A)$. 
Since we need his map, 
$$\overline{\chi}:[X,\Aut(A\otimes \K)]\rightarrow KK(A,C(X,x_0)\otimes A),$$
we recall its definition here. 
For a $C^*$-algebra $B$ and a compact Hausdorff space $Y$, we denote by $j_{B,Y}$ the homomorphism from 
$B$ to $C(Y)\otimes B$ given by $j_{B,Y}(b)= 1\otimes b$. 
We regard $\alpha\in \Map(X,\Aut(A\otimes \K))$ as a homomorphism from $A$ to $C(X)\otimes A\otimes \K$, giving rise to 
$KK(\alpha)\in KK(A,C(X)\otimes A)$, which depends only on the homotopy class of $\alpha$. 
Since $\alpha_{x_0}=\id_{A\otimes \K}$, the difference $KK(\alpha)-KK(j_{A\otimes \K,X})$ falls in $KK(A,C(X,x_0)\otimes A)$, 
and this is the definition of $\overline{\chi}([\alpha])$. 
Dadarlat \cite[Thorem 5.9]{D}) showed that $\overline{\chi}$ is a bijection whenever the pair $(A,X)$ 
is $KK$-continuous. 
He also showed that if $X$ is an $H'$-space (also called co-$H$-space), it is a group homomorphisms (see \cite[Theorem 4.6]{D}). 
Since $\Sigma X$ is an $H'$-space, the map 
$$\overline{\chi}\circ \Pi_{A,X}:[X,U(A_\flat)]\to KK(A,SC(X,x_0)\otimes A)$$ 
is a group homomorphism.
\end{remark}

Our main theorem together with Dadarlat's result includes the following statement as a special case, 
though we will directly prove it later as we need it for the proof of our main result. 
 
\begin{cor} Let $A$ be a unital Kirchberg algebra. 
Then $K_*(A_\flat)$ is isomorphic to $KK(A,S^*A)$ for $*=0,1$. 
\end{cor}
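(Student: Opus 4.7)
The plan is to construct mutually inverse natural homomorphisms between $K_*(A_\flat)$ and $KK(A,S^*A)$ for $*=0,1$ using asymptotic morphisms built from projections and unitaries in $A_\flat$, following the folklore idea attributed to Kirchberg in the introduction.

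For the forward map I would proceed as follows. Given a projection $p\in A_\flat$, apply Bartle-Graves to obtain a self-adjoint lifting $\tilde p\in C^bA$, and adjust it via functional calculus (as in the proof of Lemma \ref{lift}) so that $\|\tilde p(t)^2-\tilde p(t)\|\to 0$ and $\|[\tilde p(t),a]\|\to 0$ for every $a\in A$ as $t\to 1$. Then $\phi_t(a):=\tilde p(t)a\tilde p(t)$ is a continuous asymptotic $*$-homomorphism from $A$ to $A$, and its Connes-Higson $KK$-class in $KK(A,A)$ depends only on $[p]\in K_0(A_\flat)$, by the analogue of Lemma \ref{lift}(3) together with standard homotopy arguments for asymptotic morphisms; this yields a group homomorphism $K_0(A_\flat)\to KK(A,A)$. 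For $K_1$, a unitary $u\in A_\flat$ with unitary lifting $\tilde u\in U(C^bA)$ (Lemma \ref{lift}(1)) defines an asymptotic $*$-homomorphism $A\otimes C(S^1)\to A$ by $a\otimes f\mapsto a\,f(\tilde u(t))$, whose asymptotic multiplicativity follows from Lemma \ref{lift}(2); restricting to the augmentation ideal $SA=A\otimes C_0(S^1\setminus\{1\})$ and applying Connes-Higson yields a class in $KK(SA,A)\cong KK(A,SA)$, giving a homomorphism $K_1(A_\flat)\to KK(A,SA)$.

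Bijectivity is where the Kirchberg hypothesis becomes essential. Since $A\cong A\otimes \cO_\infty$, there is a unital embedding of $\cO_\infty$ into $A_\flat$ (see the remark after Lemma \ref{identification}), providing an infinite family of mutually orthogonal isometries inside $A_\flat$. Given any asymptotic morphism representing a class in $KK(A,S^*A)$ (such a representative exists by Connes-Higson/Dadarlat-Loring since $A$ is nuclear), I would use these isometries to re-sum it as a single projection, respectively unitary, in $A_\flat$, producing an inverse on representatives. For injectivity, if the forward image of an element of $K_*(A_\flat)$ vanishes then the associated asymptotic morphism is null-homotopic, and the refined Nakamura homotopy theorem (Theorem \ref{refinedNakamura} from the Appendix) can be invoked to upgrade this into a norm-continuous path in $A_\flat$ through projections or unitaries joining the original element to a trivial one.

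The main obstacle will be the surjectivity. Translating an abstract asymptotic morphism $A\rightsquigarrow A$ into a projection of $A_\flat$ (rather than merely of $A^\flat$) with a prescribed $K$-class requires uniform-in-$t$ estimates guaranteeing that the resulting element commutes with \emph{all} of $A$ modulo $C_0A$, not just with a countable dense subset, combined with a delicate combinatorial packaging through the isometries coming from $\cO_\infty\subset A_\flat$. Verifying that the map so constructed is a genuine two-sided inverse, together with the additional suspension bookkeeping needed in the $K_1$ case, is expected to be the technically most involved part of the argument.
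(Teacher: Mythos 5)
Your forward maps coincide with the ones the paper actually uses (the asymptotic morphisms $\phi^p$ of Lemma \ref{K0} and $\phi^u|_{SA}$, identified with $\overline{\chi}\circ\Pi$ via Theorem \ref{KK}), but the bijectivity -- which is the entire mathematical content of the corollary -- is left as a sketch that does not work as stated, and you yourself flag surjectivity as unresolved. The paper's mechanism is quite specific and is missing from your proposal. For $K_0$-surjectivity one does not ``re-sum an abstract asymptotic morphism through isometries'': given $x\in KK(A,A)$, one uses the Cuntz standard form to pick unital endomorphisms $\rho_1,\rho_2$ with $KK(\rho_1)=x$ and $KK(\rho_1)+KK(\rho_2)=KK(\id)$, forms $\sigma=S_1\rho_1(\cdot)S_1^*+S_2\rho_2(\cdot)S_2^*$, and then invokes the Kirchberg--Phillips \emph{uniqueness} theorem: since $KK(\sigma)=KK(\id)$, $\sigma$ is asymptotically unitarily equivalent to the identity via a continuous unitary path $z(t)$, and it is exactly this equivalence that forces $p=z^*S_1S_1^*z$ to commute asymptotically with \emph{all} of $A$, i.e.\ to lie in $A_\flat$ rather than merely $A^\flat$. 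This is the step your ``delicate combinatorial packaging'' would have to reproduce, and nothing in your outline supplies it. For $K_1$-surjectivity the paper does not use a resummation argument at all: it imports the realization result of \cite[Proposition 8.4]{IM2010}, producing a $\Z^2$-action $\alpha$ with $KK(\alpha_g)=KK(\id)$ whose invariant $\Phi(\alpha)$ equals the prescribed class, and obtains the unitary $u(t)=v(t)^*\alpha_{(0,1)}(v(t))\in U(A_\flat)$ from an approximate innerness path for $\alpha_{(1,0)}$; no analogue of this input appears in your plan.

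The injectivity sketch has a similar gap. For $K_0$, the refined Nakamura theorem is not needed: $KK(\phi^p)=KK(\phi^q)$ gives, again by the Kirchberg--Phillips uniqueness theorem, an asymptotic unitary equivalence between the compressions by isometries $v,w$ with ranges $p,q$, and the implementing unitary yields $wuv^*\in A_\flat$ realizing a Murray--von Neumann equivalence $p\sim q$ inside $A_\flat$. For $K_1$, ``the asymptotic morphism is null-homotopic, so Nakamura applies'' skips precisely the hard step: Theorem \ref{refinedNakamura} takes as input a continuous path of unitaries that is already asymptotically central and joins $1$ to $u$; to produce it, the paper first upgrades $KK(\psi^u)=KK(j)$ to an asymptotic unitary equivalence (uniqueness theorem again), and then performs the diagonalization of $V_t(s)R(s)\operatorname{diag}(u(t)^*,1)R(s)^{-1}$ against the Bott projection to extract the almost-central path $v(s,t)$ with $v(0,t)=1$ and $v(1,\cdot)\equiv u$ modulo $C_0A$. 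Without these two steps (asymptotic unitary equivalence from the classification theorem, and the explicit extraction of the commuting path), neither direction of your argument closes.
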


Since $\Pi_{A,X}$ is natural in $X$, Theorem \ref{main} and the Bott periodicity (see Lemma \ref{Bott})
imply the following statement via the Puppe exact sequence for cofibrations.  

\begin{cor} Let $A$ be a unital Kirchberg algebra, and let $(X,x_0)$ be a pointed finite CW-complex with $x_0$ a 0-cell. 
Then the map $\Pi_{A,X}:[X,U(A_\flat)]\to [X,\Omega\Aut(A\otimes \K)]$ is a group isomorphism. 
\end{cor}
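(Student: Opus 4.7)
The plan is to induct on the number of cells of $X$, combining the Puppe cofibration sequence with the Five Lemma. Both functors $X \mapsto [X, U(A_\flat)]$ and $X \mapsto [X, \Omega\Aut(A\otimes \K)]$ take values in abelian groups — the former by Lemma \ref{identification}, the latter because $\Omega\Aut(A\otimes\K)$ is a loop space — and $\Pi_{A, -}$ is a natural transformation between them that is an isomorphism on every sphere $S^n$ by Theorem \ref{main}.

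To produce the long exact sequences on each side, observe that a cofibration $X' \hookrightarrow X$ yields a short exact sequence of $C^*$-algebras
$$0 \to C(X/X', *) \otimes A_\flat \to C(X, x_0) \otimes A_\flat \to C(X', x_0) \otimes A_\flat \to 0.$$
The six-term $K$-theory exact sequence, combined with Lemma \ref{identification} and Bott periodicity, yields the long exact sequence on the $U(A_\flat)$ side. On the other side, applying $[-, \Omega\Aut(A\otimes\K)]$ to the Puppe cofibration sequence $X' \to X \to X/X' \to \Sigma X' \to \Sigma X \to \cdots$ gives the corresponding exact sequence of groups.

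Now induct on the total number of cells. The base case $X = \{x_0\}$ is trivial. For the inductive step, write $X = X' \cup e^n$ so that $X/X' \cong S^n$, and consider the commutative ladder comparing the two long exact sequences. The four outer vertical maps in the relevant five-term stretch are $\Pi_{A, S^n}$, $\Pi_{A, S^{n-1}}$, $\Pi_{A, X'}$, and $\Pi_{A, \Sigma X'}$. The first two are isomorphisms by Theorem \ref{main}, and the last two by the induction hypothesis, using that $\Sigma X'$ is a pointed finite CW-complex whose total cell count equals that of $X'$ (so it falls under strictly smaller cell counts than $X$). The Five Lemma then delivers the desired isomorphism $\Pi_{A, X}$.

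The delicate point will be checking that the ladder truly commutes at the connecting maps. For morphisms induced by continuous maps of spaces this is immediate from naturality of $\Pi_{A,-}$, but the connecting morphism on the $K$-theoretic side is induced by the ideal inclusion $C(X/X', *) \otimes A_\flat \subset C(X, x_0) \otimes A_\flat$, while the corresponding morphism on the automorphism-group side is the Puppe connecting map through $\Sigma X'$. Both ultimately arise from the same geometric cofibration $X' \hookrightarrow X$, so verifying their compatibility should reduce to unwinding the definitions of $\rho^u$ and of the identification in Lemma \ref{identification}, together with a standard pinch-map argument for the Puppe boundary.
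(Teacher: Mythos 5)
Your overall strategy (cell induction, Puppe sequence, five lemma, Theorem \ref{main} on spheres) is the right one, but the ladder you propose does not exist as stated, and the point you call ``delicate'' is in fact the main obstruction. A cofibration sequence extends only to the left, towards suspensions: applying $[-,\Omega\Aut(A\otimes\K)]$ to $X'\to X\to X/X'\to\Sigma X'\to\cdots$ gives an exact sequence that terminates at $[X',\Omega\Aut(A\otimes\K)]$, so there is no fifth term to the right and no column ``$\Pi_{A,S^{n-1}}$'' supplied by topology; yet a term to the right of $[X',\cdot]$ with an injective vertical map is indispensable for the surjectivity half of the five (four) lemma at the $[X,\cdot]$ spot. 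To create such a term you must de-loop, writing $[Z,\Omega\Aut(A\otimes\K)]=[\Sigma Z,\Aut(A\otimes\K)]$ and continuing with $[X',\Omega\Aut(A\otimes\K)]\to[X/X',\Aut(A\otimes\K)]\cong[S^{n-1},\Omega\Aut(A\otimes\K)]$, while on the other side the continuation you invoke is the $C^*$-algebraic boundary map $K_1(C(X',x_0)\otimes A_\flat)\to K_0(C_0(X\setminus X')\otimes A_\flat)\cong [S^{n-1},U(A_\flat)]$. The commutativity of the resulting square is not an instance of naturality of $\Pi_{A,-}$: it compares a Bott-periodicity identification on the $U(A_\flat)$ side with a loop-space shift on the automorphism side, and verifying it is essentially the content of Lemma \ref{Bott}, which is proved in the paper by a genuine asymptotic-morphism computation, not by unwinding definitions and a pinch-map argument.

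The paper's proof is arranged precisely to avoid this issue: it first proves the statement for all suspensions $\Sigma^nX$ with $n\geq1$, by induction on the number of cells, because for a suspension the five-term stretch $[\Sigma^{n+1}X',\cdot]\to[\Sigma^n(X/X'),\cdot]\to[\Sigma^nX,\cdot]\to[\Sigma^nX',\cdot]\to[\Sigma^{n-1}(X/X'),\cdot]$ lies entirely inside the Puppe sequence, every horizontal map is induced by a continuous map of spaces, and hence every square commutes by naturality of $\Pi_{A,-}$ alone. (Note also that you do not need the six-term $K$-theory sequence on the left-hand side at all: $U(A_\flat)$ is a topological group, so $[-,U(A_\flat)]$ applied to the Puppe sequence is already an exact sequence of groups; using the $K$-theoretic boundary maps would force you to check, in addition, that they agree with the Puppe maps under the identification of Lemma \ref{identification}.) The case of $X$ itself is then deduced from that of $\Sigma^2X$ via Lemma \ref{Bott} together with the bijectivity of Dadarlat's $\overline{\chi}$. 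If you insist on a direct induction on $X$, you must prove the boundary-square compatibility described above; in other words, you cannot bypass Lemma \ref{Bott}.
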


\begin{proof} Note that the inclusion map from any subcomplex into $X$ is a cofibration \cite[Theorem 6.22, 6.23]{DK}. 
Since $\Pi_{A,X}$ is natural in $X$, the statement for $\Sigma^nX$ with $n\geq 1$, instead of $X$, 
follows from an induction argument with respect to 
the number of cells using Theorem \ref{main}, Lemma \ref{Bott} and the Puppe exact sequence for cofibrations \cite[Theorem 6.42]{DK}. 
Now the statement for $X$ follows from Lemma \ref{Bott}. 
\end{proof}

Before starting the proof of Theorem \ref{main}, we observe that the map $\Pi_{A,X}$ depends only on 
the Morita equivalence class of $A$ as long as $A$ is unital. 
Let $p\in A$ be a non-zero projection. 
Then \cite[Theorem 4.23]{Br} implies that there exists an isometry $V\in M(A\otimes \K)$ whose range projection 
is $p\otimes 1$. 
Thus we get an explicit isomorphism $A\otimes \K\ni x\mapsto VxV^* \in pAp\otimes \K$, and a group isomorphism 
$\Theta:\Aut(pAp\otimes \K)\to \Aut(A\otimes \K)$ given by $\Theta(\alpha)(a)=V^*\alpha(VaV^*)V$, which induces 
an isomorphism 
$$\Theta_*:[X,\Omega\Aut(pAp\otimes \K)]\to [X,\Omega \Aut(A\otimes \K)].$$ 

\begin{lemma}\label{Morita} Let the notation be as above. 
\begin{itemize}
\item[$(1)$] The map $\theta:A_\flat \ni x \mapsto px\in (pAp)_\flat$ is an isomorphism. 
\item[$(2)$] $\Pi_{A,X}=\Theta_*\circ \Pi_{pAp,X}\circ \theta_*$ holds, where 
$\theta_*:[X,U(A_\flat)]\to [X,U((pAp)_\flat)]$ 
is the isomorphism induced by $\theta$. 
\end{itemize}
\end{lemma}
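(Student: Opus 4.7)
For (1), the map $\theta$ is well-defined: if $x\in A_\flat$ has lift $\tilde x\in C^b([0,1),A)$, asymptotic commutativity with $p\in A$ gives $p\tilde x(t)\equiv p\tilde x(t)p\pmod{C_0([0,1),A)}$, so the image lies in $(pAp)^\flat$, and belongs to $(pAp)_\flat$ because $x$ commutes with all of $A$. Injectivity comes from fullness of $p$: since $A$ is simple and unital, write $1=\sum_{i=1}^na_ipb_i$ with $a_i,b_i\in A$; asymptotic commutativity of $x$ with each $b_i$ then yields $\tilde x(t)\equiv\sum_ia_i(p\tilde x(t))b_i$ modulo $C_0$, which forces $x=0$ whenever $\theta(x)=0$. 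For surjectivity, the isometry $V\in M(A\otimes\K)$ transports the problem to the stable setting: $\Ad V$ induces an isomorphism between the stabilized asymptotic centralizers of $A\otimes\K$ and $pAp\otimes\K$, and under the natural identification of $A_\flat$ (resp.\ $(pAp)_\flat$) with its stabilized counterpart via the ampliation $x\mapsto x\otimes 1_{M(\K)}$, the map $\theta$ corresponds to $\Ad V$ via the identity
\[
V(\tilde x\otimes 1)V^*-p\tilde x\otimes 1=-[\tilde x\otimes 1,V]V^*,
\]
together with the fact that $[\tilde x\otimes 1,V]\,a\to 0$ in norm for every $a\in A\otimes\K$ (the matrix columns of $V$ lie in $A$).

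For (2), fix a unitary lift $\tilde u\in\Map(X,U(C^b A))$ of $u$ and a norm-continuous path $U:X\times[0,1)\to U(M(A\otimes\K))$ with $U(x,t)=\tilde u(x,t)\otimes 1_\K$ on $[1/2,1)$; also take the lift $\tilde w$ of $\theta u=pu$ obtained by unitarizing $p\tilde u$ as in Lemma \ref{lift},(1), and $U'(x,t)\in U(M(pAp\otimes\K))$ with $U'(x,t)=\tilde w(x,t)\otimes 1_\K$ on $[1/2,1)$. The definition of $\Theta$ directly gives $\Theta\circ\rho^{\theta u}_{(x,t)}=\Ad(V^*U'(x,t)V)$, and the relation $V^*(p\otimes 1)=V^*$ yields $V^*U'(x,t)V\approx V^*(\tilde u(x,t)\otimes 1)V$ on $[1/2,1)$ modulo a term vanishing uniformly in $x$ as $t\to 1$. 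Setting $W(x,t):=V^*U'(x,t)V\cdot U(x,t)^*$ so that $\Theta\circ\rho^{\theta u}=(\Ad W)\circ\rho^u$, the claim reduces to showing $\Ad W$ is nullhomotopic in $\Map(\Sigma X,\Aut(A\otimes\K))$.

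The identity
\[
V^*(\tilde u\otimes 1)V(\tilde u^*\otimes 1)-1=V^*[\tilde u\otimes 1,V](\tilde u^*\otimes 1)
\]
shows that $W$ may fail to approach $1$ in norm as $t\to 1$, yet the asymptotic centrality of $\tilde u\otimes 1$ against every element of $A\otimes\K$ forces $\Ad W(x,t)\to\id$ in point-norm uniformly in $x$, so $\Ad W$ does define an element of $\Map(\Sigma X,\Aut(A\otimes\K))$. I would produce the nullhomotopy by connecting $V$ to $1$ through a norm-continuous path $V_s$ of isometries in $M(A\otimes\K)$ (which exists by proper infiniteness of $M(A\otimes\K)$) and propagating the $\Ad$-construction along this path, suitably reconciling the corner structure of $U'$ with the interpolating projections $V_sV_s^*$. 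The main obstacle is this final step: because $W$ is not norm-continuous at $t=1$, the clean argument of Lemma \ref{well-defined} does not apply verbatim, and the homotopy must be produced by combining point-norm continuity of $\Ad W$ with contractibility of the isometry space in $M(A\otimes\K)$.
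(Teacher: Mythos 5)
Your reduction of (2) to the nullhomotopy of $\Ad W$ with $W(x,t)=V^*U'(x,t)V\,U(x,t)^*$ is exactly the paper's setup, and your injectivity argument for (1) via fullness of $p$ is a legitimate alternative to the paper's; but both halves of your proposal have a genuine gap. In (1), surjectivity is the problem: the ``natural identification'' of $A_\flat$ with the asymptotic centralizer of $A\otimes\K$ via $x\mapsto x\otimes 1_{M(\K)}$ does not exist as stated, because a lift $\tilde x(t)\otimes 1_{M(\K)}$ is a multiplier and not an element of $A\otimes\K$, so the ampliation does not even land in $(A\otimes\K)^\flat$; moreover $(A\otimes\K)^\flat\cap(A\otimes\K)'$ contains the large annihilator ideal of $A\otimes\K$ and is not isomorphic to $A_\flat$, so making your transport-by-$\Ad V$ argument honest would require Kirchberg's corrected central sequence algebra $F(\cdot)$ and its stability under $\otimes\K$, none of which you set up. The paper does something much lighter: choose an isometry $v\in A$ with $vv^*\leq p$ (possible since $A$ is purely infinite simple), check that $v^*bv\in A_\flat$ for $b\in(pAp)_\flat$ (commute $b$ past $vav^*\in pAp$), and verify that $b\mapsto v^*bv$ is a two-sided inverse of $\theta$, using $pv=v$ and $b=pbp$. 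You should replace your surjectivity argument by this explicit inverse.

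In (2), the step you yourself call the ``main obstacle'' is precisely what is missing, and the route you sketch does not work as stated: for an interpolating isometry $V_s$ whose range projection is not $p\otimes1$, the element $V_s^*U'(x,t)V_s$ is not a unitary (since $U'$ lives in the corner $(p\otimes1)M(A\otimes\K)(p\otimes1)$), so ``propagating the $\Ad$-construction'' along the path is undefined, and point-norm convergence of $\Ad W(x,t)$ alone gives you no topology in which to exploit contractibility of a unitary or isometry space. The missing idea is to work with the strict topology on $U(M(A\otimes\K))$: by Lemma \ref{lift},(3) one has $\sup_{x}\lVert\tilde u'(x,t)-p\tilde u(x,t)\rVert\to0$, so for $t\geq1/2$ the unitary $W(x,t)$ agrees up to a norm-small error with $V^*(\tilde u(x,t)\otimes1)V(\tilde u(x,t)^*\otimes1)$, and the commutator estimates against $a$, $Va$ and $aV^*$ (all in $A\otimes\K$) show that $W(x,t)a\to a$ and $aW(x,t)\to a$ uniformly in $x$ as $t\to1$. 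Hence $W$ extends to an element $\widetilde W\in\Map(\Sigma X,U(M(A\otimes\K)))$ for the strict topology with $\widetilde W(\cdot,1)=1$, and $\Theta(\rho^{pu}_{x,t})=\Ad\widetilde W(x,t)\circ\rho^u_{x,t}$ on all of $X\times I$. Since $U(M(A\otimes\K))$ is contractible in the strict topology and $\Ad:U(M(A\otimes\K))\to\Aut(A\otimes\K)$ is continuous from the strict to the point-norm topology, $\Ad\widetilde W$ is nullhomotopic and $[\Theta(\rho^{pu})]=[\rho^u]$; this is the same scheme as Lemma \ref{well-defined}, with the strict topology replacing the norm topology, and it is the step your proposal leaves open.
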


\begin{proof} (1) It is clear that $\theta$ is a homomorphism. 
We choose an isometry $v\in A$ with $vv^*\leq p$. 
Let $b\in (pAp)_\flat$. 
Then for any $a\in A$, we get 
$$v^*bva=v^*b(vav^*)v=v^*(vav^*)bv=av^*bv,$$
and $v^*bv\in A_\flat$. 
Thus the inverse of $\theta$ is given by $pA_\flat p\ni b\mapsto v^*bv\in A_\flat$, 
and $\theta$ is an isomorphism. 

(2) For $u\in \Map(X,U(A_\flat))$, let $\tilde{u}$, $U$ and $\rho^u$ be as in the proof of Lemma \ref{lift}. 
(1) implies $pu\in \Map(X,U((pAp)_\flat))$. 
We choose a lifting $\tilde{u}'\in \Map(X,(pAp)^\flat)$ of $pu$, and a continuous map $U':X\times [0,1)\to U(M(pAp\otimes \K))$ 
satisfying $U'(x,t)=\tilde{u}'(x,t)\otimes 1_{\K}$ for any $x\in X$, $1/2\leq t<1$, and $U'(x,0)=p\otimes 1_{\K}$ 
for any $x\in X$. 
We define $$\rho^{pu}\in \Map(X,\Omega\Aut(pAp\otimes \K))$$ as before using $U'$. 
To prove the statement it suffices to show 
$$[\Theta(\rho^{pu})]=[\rho^u]\in [X,\Omega \Aut(A\otimes \K)].$$  

For $a\in A\otimes \K$ and $(x,t)\in X\times [0,1)$, we have 
$$\Theta(\rho^{pu}_{x,t})(a)=\Ad (V^*U'(x,t)V)(a)=\Ad W(x,t)\circ \rho^u_{x,t}(a),$$
where $W(x,t)=V^*U'(x,t)VU(x,t)^*$. 
Note that $W:X\times [0,1)\to U(M(A\otimes \K))$ is a norm continuous map satisfying $W(x_0,t)=1$ and $W(x,0)=1$. 
We claim that $W$ extends to a continuous map $\widetilde{W}:X\times I\to U(M(A\otimes \K))$ in the strict topology with  
$\widetilde{W}(x,1)=1$ for $x\in X$. 
Indeed, note that for $1/2\leq t<1$, we have $W(x,t)=V^*(\tilde{u}'(x,t)\otimes 1)V(\tilde{u}(x,t)^*\otimes 1)$, and 
the proof of Lemma \ref{lift} implies 
$$\lim_{t\to 1}\sup_{x\in X}\|\tilde{u}'(x,t)-p\tilde{u}(x,t)\|=0.$$
Thus we get 
$$\lim_{t\to 1}\sup_{x\in X}\|W(x,t)-V^*(\tilde{u}(x,t)\otimes 1)V(\tilde{u}(x,t)^*\otimes 1)\|=0,$$
and to verify the claim it suffices to show
$$\lim_{t\to 1}\sup_{x\in X}\|V^*(\tilde{u}(x,t)\otimes 1)V(\tilde{u}(x,t)^*\otimes 1)a-a\|=0,$$ 
$$\lim_{t\to 1}\sup_{x\in X}\|aV^*(\tilde{u}(x,t)\otimes 1)V(\tilde{u}(x,t)^*\otimes 1)-a\|=0,$$
for any $a\in A\otimes \K$. 
Note that Lemma \ref{lift},(2) implies  
$$\lim_{t\to 1}\sup_{x\in X}\|[\tilde{u}(x,t)\otimes 1,a]\|=0$$
for any $a\in A\otimes \K$. 
Since 
\begin{align*}
\lefteqn{\|V^*(\tilde{u}(x,t)\otimes 1)V(\tilde{u}(x,t)^*\otimes 1)a-a\|} \\
 & \leq \|[\tilde{u}(x,t)^*\otimes 1,a]\|
+\|V^*(\tilde{u}(x,t)\otimes 1)Va(\tilde{u}(x,t)^*\otimes 1)-a\|\\
 &= \|[\tilde{u}(x,t)^*\otimes 1,a]\|+\|[\tilde{u}(x,t)\otimes 1,Va]\|,
\end{align*}
\begin{align*}
\lefteqn{\|aV^*(\tilde{u}(x,t)\otimes 1)V(\tilde{u}(x,t)^*\otimes 1)-a\|} \\
 &\leq \|[aV^*,(\tilde{u}(x,t)\otimes 1)]\|+\|(\tilde{u}(x,t)\otimes 1)a(\tilde{u}(x,t)^*\otimes 1)-a\| \\
 &=\|[aV^*,(\tilde{u}(x,t)\otimes 1)]\|+\|[\tilde{u}(x,t)\otimes 1,a]\|,
\end{align*}
we get the claim. 

Thanks to the claim, we get $\Theta(\rho^{pu}_{x,t})=\Ad\widetilde{W}(x,t)\circ \rho^u_{x,t}$ 
for any $(x,t)\in X\times I$, and we may regard $\widetilde{W}$ as an element of $\Map(\Sigma X,U(M(A\otimes \K)))$, 
where $U(M(A\otimes \K))$ is equipped with the strict topology. 
Since $U(M(A\otimes \K))$ is contractible in the strict topology and the map $\Ad :U(M(A\otimes \K))\to \Aut(A\otimes \K)$ is continuous, we get the statement. 
\end{proof}

\section{Proof of Theorem \ref{main}}
Throughout this section, we assume that $A$ is a unital Kirchberg algebra and $(X,x_0)$ is a pointed compact metrizable 
space. 
Recall that $A$ is said to be in the Cuntz standard form if the $K_0$-class of $1_A$ is 0 in $K_0(A)$. 
Thanks to Lemma \ref{Morita}, we may and do assume that $A$ is in the Cuntz standard form in order to prove Theorem \ref{main}. 
We suppress $A$ (and sometimes even $X$) in $\Pi_{A,X}$ and $j_{A,X}$ to avoid heavy notation. 

For $u\in \Map(X,U(A_\flat))$, we choose $\tilde{u}$ and $U$ as in the proof of Lemma \ref{lift}. 
We often identify $u$ and $\tilde{u}$ if there is no possibility of confusion.  

Since we need an alternative description of the map $\overline{\chi}\circ \Pi_{X}$ discussed in Remark \ref{Dadarlat}, 
we work on asymptotic morphisms arising from $u\in \Map(X,U(A_\flat))$. 
The reader is referred to \cite[Section 24]{Bl} for the basics of asymptotic morphisms. 
For the parameter space of asymptotic morphisms, we adopt $[0,1)$ rather than the usual space $[1,\infty)$. 
Recall that two asymptotic morphisms $\phi$ and $\psi$ from $B$ to $D$ are asymptotically unitarily equivalent 
if there exists a continuous family $\{u_t\}_{t\in [0,1)}$ in $U(D)$ satisfying 
$$\lim_{t\to 1}\|u_t\phi_t(x)u_t^*-\psi_t(x)\|=0$$
for any $x\in B$.  

Let $\T=\{z\in \C;\; |z|=1\}.$
We denote by $\phi^u=(\phi^u_t)$ the ucp asymptotic morphism from 
$C(\T,A)$ to $C(X,A)$ corresponding to the homomorphism 
$$C(\T,A)\ni f\otimes a\mapsto f(u)a\in (C(X)\otimes A)^\flat.$$ 
Since $f(u(x_0))a=f(1)a$, the restriction of $\phi^u$ to $SA$ 
may be regarded as an asymptotic morphism from $SA$ to $C(X,x_0)\otimes A$.   
We denote by $KK(\phi^u|_{SA})$ the $KK$-class of $\phi^u$ restricted to $SA$, which is in  
$KK(SA,C(X,x_0)\otimes A)$.  

We denote by $b_{KK}$ the isomorphism from $KK(SA,B)$ to $KK(A,SB)$ given by the composition of two maps 
$$b_{KK}:KK(SA,B)\rightarrow KK(SSA,SB)\rightarrow KK(A,SB),$$ 
where the first one is given by suspension and the second one is given by the Bott periodicity.
We explicitly construct an asymptotic morphism giving the class $b_{KK}(KK(\phi^u|_{SA}))\in KK(A,SC(X,x_0)\otimes A)$. 

For $s\in [0,1]$ and $z\in \T$, we set 
$$W(s,z)=R(s)\left(
\begin{array}{cc}
\overline{z} &0  \\
0 &1 
\end{array}
\right)
R(s)^{-1}
\left(
\begin{array}{cc}
z &0  \\
0 &1 
\end{array}
\right),
$$
where
$$R(s)=\left(
\begin{array}{cc}
\sqrt{1-s} &-\sqrt{s}  \\
\sqrt{s} & \sqrt{1-s}
\end{array}
\right).
$$ 
Then the Bott projection $P(s,z)$ is given by 
$$P(s,z)=W(s,z)eW(s,z)^*,$$
where 
$$e=\left(
\begin{array}{cc}
1 &0  \\
0 &0 
\end{array}
\right).
$$
For $a\in A$, $t\in [0,1)$ and $(x,s)\in X\times I$, we set 
$$\psi^u_t(a)(x,s)=P(s,\tilde{u}(x,t))\left(
\begin{array}{cc}
a &0  \\
0 &a 
\end{array}
\right).$$ 
Then $\psi^u=(\psi^u_t)$ is an asymptotic morphism from $A$ to $M_2(C(\Sigma X)\otimes A)$, 
and we denote by $KK(\psi^u)\in KK(A,C(\Sigma X)\otimes A)$ the $KK$-class determined by $\psi^u$. 
Since 
$$\psi^u_t(a)(x,0)=\psi^u_t(a)(x,1)=\psi^u_t(a)(x_0,s)=\left(
\begin{array}{cc}
a &0  \\
0 &0 
\end{array}
\right),
$$
the difference $KK(\psi^u)-KK(j_{\Sigma X})$ falls in $KK(A,SC(X,x_0)\otimes A)$.

\begin{theorem}\label{KK} Let $A$ be a unital Kirchberg algebra, 
and let $(X,x_0)$ be a pointed compact metrizable space. 
Then we have 
$$b_{KK}(KK(\phi^u|_{SA}))=KK(\psi^u)-KK(j_{\Sigma X})=\overline{\chi}\circ \Pi_{X}([u]),$$ 
in $KK(A,SC(X,x_0)\otimes A)$.
\end{theorem}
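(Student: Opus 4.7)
I would prove the two equalities in the statement separately.

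For the first equality $b_{KK}(KK(\phi^u|_{SA})) = KK(\psi^u) - KK(j_{\Sigma X})$, the plan is to unfold the Bott isomorphism explicitly. The classical description of Bott periodicity asserts that for any unitary $v$ in a unital $C^*$-algebra $D$, the image of $[v] \in K_1(D)$ under the Bott map is the $K_0$-class $[P(s, v)] - [e] \in K_0(SD)$. This description extends to the level of asymptotic morphisms: given $\phi : C(\T, A) \to B$, one computes $b_{KK}(KK(\phi|_{SA}))$ by substituting $\phi_t(z)$ (where $z$ is the coordinate function on $\T$) for $v$ in the Bott projection formula, obtaining an asymptotic morphism from $A$ to $M_2(SB)$. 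Specializing to $\phi = \phi^u$, for which $\phi^u_t(z) = \tilde u(x, t)$, this produces precisely $\psi^u - j_{\Sigma X}$. A routine check of the asymptotic morphism axioms and of the vanishing at the base point of $\Sigma X$ completes this step.

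For the second equality $KK(\psi^u) - KK(j_{\Sigma X}) = \overline{\chi} \circ \Pi_X([u])$, the strategy is to view both sides as arising from inner conjugations by continuous families of unitaries. The key observation is the factorization
$$\psi^u_t(a)(x, s) = W(s, \tilde u(x, t))(a \otimes e_{11})W(s, \tilde u(x, t))^{*},$$
so that $\psi^u$ is implemented by conjugation by the $M_2(A)$-valued unitary $W$. On the other hand, $\overline{\chi} \circ \Pi_X([u])$ is, by definition, the $KK$-class of the homomorphism $a \mapsto \rho^u(a \otimes e_{11}) = \Ad U(a \otimes e_{11})$ minus the $KK$-class of the constant embedding. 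I would then construct an explicit norm-continuous family of unitaries in $M(A \otimes \K)$, built from $W$ and $U$ via a suitable reparametrization of the $(s, t)$-variables, implementing an asymptotic unitary equivalence between the asymptotic morphism $\psi^u$ (viewed in $M(A\otimes \K)$ via a fixed embedding $M_2 \hookrightarrow \K$) and the homomorphism $a \mapsto \rho^u(a \otimes e_{11})$. Since $U(M(A \otimes \K))$ is contractible in the strict topology, an argument parallel to Lemma \ref{well-defined} then yields the equality of $KK$-classes after subtracting the trivial part.

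The main obstacle will be the second equality, specifically the careful matching of the two different parametrizations of $\Sigma X$: $\psi^u$ uses a suspension parameter $s \in I$ which is independent of the asymptotic parameter $t \in [0, 1)$, whereas in $\rho^u$ these two roles are played by a single parameter $t$ that moves inside $\Sigma X$ while also realizing the asymptotic-centralizer property of $\tilde u$. Rewriting the two constructions on a common parameter space, while preserving both the boundary conditions defining $\Sigma X$ and the asymptotic-centralizer behavior of $\tilde u$ as $t \to 1$, will require delicate bookkeeping of the implementing unitaries.
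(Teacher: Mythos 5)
Your plan for the first equality is the same as the paper's, which indeed treats it as routine. For the second equality you take a genuinely different route. The paper does not construct an asymptotic unitary equivalence between $\psi^u$ and $a\mapsto\rho^u(a\otimes e_{11})$: it conjugates $\psi^u$ by the explicit unitaries $\mathrm{diag}(v(x,sf(t)),v(x,sf(t))^*)\,W(s,\tilde{u}(x,t))^*$ to reach the ucp asymptotic morphism $\mu^u_t(a)(x,s)=\Ad v(x,sf(t))\bigl((1-s)a+s\tilde{u}(x,t)^*a\tilde{u}(x,t)\bigr)$, and then spends the bulk of the proof building a two-parameter homotopy $\Psi_t(a)(r,x,s)$ of ucp asymptotic morphisms joining $\mu^u$ to $\rho^{u,0}$, checking equicontinuity and asymptotic multiplicativity by hand. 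Your single-step unitary-equivalence strategy can be made to work and is arguably shorter, but all the content sits in your ``delicate bookkeeping'': if you implement it by unitaries of the form $\mathrm{diag}(\tilde{u}(x,\sigma_t(s)),\tilde{u}(x,\sigma_t(s))^*)\,W(s,\tilde{u}(x,t))^*$, the reparametrization must satisfy $\sigma_t(0)=0$ and $\sigma_t(1)=t$ (so that the unitaries are constantly $1$ on the collapsed set and define elements of $U(M_2(C(\Sigma X)\otimes A))$) and $\sigma_t\to\mathrm{id}$ uniformly on $[0,1]$ (e.g.\ $\sigma_t(s)=st$); then $\Ad\tilde{u}(x,\sigma_t(s))(a)\to\rho^{u,0}_{x,s}(a)$ uniformly, by splitting $[0,1]$ into a region near $1$ where asymptotic centrality applies and a compact region where uniform continuity of $\tilde{u}$ applies. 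In contrast, the paper-style coupling $\sigma_t(s)=f^{-1}(sf(t))$ does \emph{not} give an asymptotic unitary equivalence, since for fixed $0<s<1$ it pushes $\sigma_t(s)$ into the asymptotically central region while $\rho^{u,0}_{x,s}$ remains $\Ad\tilde{u}(x,s)$; this failure is exactly what the paper's extra homotopy $\Psi$ is there to repair, so the choice of $\sigma_t$ is the one point where your argument could genuinely break.

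Two details you should make explicit. First, your factorization $\psi^u_t(a)(x,s)=W(s,\tilde{u}(x,t))(a\otimes e_{11})W(s,\tilde{u}(x,t))^*$ is only an asymptotic identity, with error controlled by $\lVert[\tilde{u}(x,t),a]\rVert$; that is harmless but should be stated. Second, as in the paper you need the reduction to the Cuntz standard form so that one may normalize $\tilde{u}(x,0)=1$ and take $U=\tilde{u}\otimes 1$; without this, the implementing unitaries fail to be constant on $X\times\{0\}$ and $\overline{\chi}\circ\Pi_X([u])$ is not literally computed by $a\mapsto\Ad\tilde{u}(x,s)(a)$, so the comparison would have to absorb the discrepancy on the low-$t$ part of $U$ into the contractible group $U(M(A\otimes\K))$, as you indicate.
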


\begin{proof}
It is routine work to show $b_{KK}(KK(\phi^u|_{SA}))=KK(\psi^u)-KK(j_{\Sigma X})$, and we concentrate on the proof of  
$KK(\psi^u)-KK(j_{\Sigma X})=\overline{\chi}\circ \Pi_{X}([u])$. 

Recall that $A$ is in the Cuntz standard form and it contains a unital copy of the Cuntz algebra $\cO_2$. 
Since $\tilde{u}(x,t)$ almost commutes with the unital copy of $\cO_2$ for $t$ sufficiently close to 1, the $K_1$-class 
of $\tilde{u}(\cdot,t)$ in $K_1(C(X)\otimes A)$ is trivial, and we may and do assume that $\tilde{u}(x,0)=1$. 
Thus we can define $\rho^{u,0}\in \Map(\Sigma X,\Aut(A))$ by 
$$\rho^{u,0}_{x,t}(a)=\left\{
\begin{array}{ll}
\Ad \tilde{u}(x,t) , &\quad  0\leq t<1 \\
\id , &\quad t=1
\end{array}
\right..
$$
For the definition of $\rho^u\in \Map(\Sigma X,\Aut(A\otimes \K))$, we can choose $U(x,t)$ 
of the form $\tilde{u}(x,t)\otimes 1$. 
With this choice, we have 
\begin{align*}
\overline{\chi}\circ \Pi_{X}([u])&=KK(\rho^u)-KK(j_{A\otimes \K,\Sigma X}) \\
 &=KK(\rho^{u,0}\otimes\id_\K)-KK(j_{A,\Sigma X}\otimes\id_\K)\\
 &=KK(\rho^{u,0})-KK(j_{A,\Sigma X}). 
\end{align*}
Thus our task is to show $KK(\psi^u)=KK(\rho^{u,0})$. 

We fix a homeomorphism $f:[0,1)\to [0,\infty)$, e.g. $f(t)=t/(1-t)$, and set 
$v(x,y)=\tilde{u}(x,f^{-1}(y))$ for $(x,y)\in X\times [0,\infty)$.  
Let 
$$V_t(x,s)=\left(
\begin{array}{cc}
v(x,sf(t)) &0  \\
0 &v(x,sf(t))^* 
\end{array}
\right)W(s,\tilde{u}(x,t))^*,
$$
for $t\in [0,1)$ and $(x,s)\in X\times I$. 
Then we have $V_t(x,0)=V_t(x,1)=V(x_0,s)=1$,
and $(V_t)_{t\in [0,1)}$ is a continuous family of unitaries in $M_2(C(\Sigma X)\otimes A)$. 
For $a\in A$, we have 
\begin{eqnarray*}\lefteqn{
V_t(x,s)\psi^u_t(a)V_t(x,s)^*}\\
&=&\Ad
\left(
\begin{array}{cc}
v(x,sf(t)) &0  \\
0 &v(x,sf(t))^* 
\end{array}
\right)
\Big(e
W(s,\tilde{u}(x,t))^*
\left(
\begin{array}{cc}
a &0  \\
0 &a 
\end{array}
\right)
W(s,\tilde{u}(x,t))
\Big).
\end{eqnarray*}
Since 
$$\lim_{t\to1}\sup_{(x,s)\in X\times [0,1]}\|[e,
W(s,\tilde{u}(x,t))^*
\left(
\begin{array}{cc}
a &0  \\
0 &a 
\end{array}
\right)
W(s,\tilde{u}(x,t))]\|=0,$$
the asymptotic morphism $A\ni a\mapsto V_t\psi^u_t(a)V_t^*$ is equivalent to 
\begin{eqnarray*}
\lefteqn{a\mapsto 
\Ad\left(
\begin{array}{cc}
v(x,sf(t)) &0  \\
0 &v(x,sf(t))^* 
\end{array}
\right)
\Big(e
W(s,\tilde{u}(x,t))^*
\left(
\begin{array}{cc}
a &0  \\
0 &a 
\end{array}
\right)
W(s,\tilde{u}(x,t))
e\Big) 
} \\
 &=&\Ad \left(
\begin{array}{cc}
v(x,sf(t)) &0  \\
0 &v(x,sf(t))^* 
\end{array}
\right) \\
&&
\Big(
\left(
\begin{array}{cc}
\tilde{u}(x,t)^* &0  \\
0 &0 
\end{array}
\right)
R(s)
\left(
\begin{array}{cc}
\tilde{u}(x,t)a\tilde{u}(x,t)^* &0  \\
0 &a 
\end{array}
\right)
R(s)^*
\left(
\begin{array}{cc}
\tilde{u}(x,t) &0  \\
0 &0 
\end{array}
\right)
\Big)\\
 &=&\Ad \left(
\begin{array}{cc}
v(x,sf(t)) &0  \\
0 &v(x,sf(t))^* 
\end{array}
\right) 
\Big(
\left(
\begin{array}{cc}
(1-s)a+s\tilde{u}(x,t)^*a\tilde{u}(x,t) &0  \\
0 &0 
\end{array}
\right)
\Big).
\end{eqnarray*}
Thus the $KK$-class of $\psi$ is the same as that of the asymptotic morphism $\mu^u_t:A\rightarrow C(\Sigma X)\otimes A$ given by 
$$\mu^u_t(a)(x,s)=\Ad v(x,sf(t))\big((1-s)a+s\tilde{u}(x,t)^*a\tilde{u}(x,t)\big).$$

We construct a homotopy between $\mu^u$ and $\rho^{u,0}$ now. 
For $a\in A$, $r\in [0,1]$, $x\in X$, $s\in [0,1)$, and $t\in [0,1)$, we set 
\begin{eqnarray*}\lefteqn{\Psi_t(a)(r,x,s)}\\
&=&\Ad v(x,(1-r)sf(t)+rf(s))\\
&&\big((1-s+rs)a+(s-rs)\Ad v(x,(1-r)f(t)+rf(s))^*(a)\big).
\end{eqnarray*}
Then for fixed $(r,x,s,t)\in I\times X\times [0,1)^2$ the map $A\ni a\mapsto \Psi_t(a)(r,x,s)$ is a ucp map, and 
for a fixed $a\in A$ the map $I\times X\times [0,1)^2\ni (r,x,s,t)\in \Psi_t(a)(r,x,s)\in A$ is continuous. 
We also have $\Psi_t(a)(r,x,0)=\Psi_t(a)(r,x_0,s)=a$. 

We first show  
$$\lim_{s\to 1}\sup_{r\in [0,1],\;x\in X}\|\Psi_t(a)(r,x,s)-a\|=0.$$
We have \begin{eqnarray*}
\lefteqn{\|\Psi_t(a)(r,x,s)-a\|} \\
 &\leq&(1-s+rs)\|[v(x,(1-r)sf(t)+rf(s)),a]\|\\
 &+&(s-rs)\|[v(x,(1-r)sf(t)+rf(s))v(x,(1-r)f(t)+rf(s))^*,a]\|.
\end{eqnarray*}
Let $0<\epsilon\leq 1$ be given. 
Since 
$$\lim_{y\to\infty}\sup_{x\in X}\|[v(x,y),a]\|=0,$$ 
there exits  $y_0\geq 0$ so that $\|[v(x,y),a]\|\leq \epsilon$ holds for every $y\geq y_0$. 
We choose $0\leq s_0<1$ satisfying $y_0\leq \epsilon f(s_0)$.  
Since $v(x,y)$ is uniformly continuous on $X\times [0,y_0+2]$, there exists $0<\delta<1$ such that 
$\|v(x,y_1)-v(x,y_2)\|<\epsilon$ holds for any $x\in X$ and $y_1,y_2\in [0,y_0+2]$ with $|y_1-y_2|<\delta$.   
Assume $\max\{s_0,1-\epsilon,1-\frac{\delta}{f(t)}\}< s<1$. 
Then for $0\leq r\leq \epsilon$, we have 
$$(1-s+rs)\|[v(x,(1-r)sf(t)+rf(s)),a]\|\leq 2(1-s+\epsilon)\|a\|\leq 4\epsilon\|a\|,$$ 
$$(s-rs)\|[v(x,(1-r)sf(t)+rf(s))v(x,(1-r)f(t)+rf(s))^*,a]\|\leq 2\epsilon\max\{1,\|a\|\}.$$
The first estimate is straightforward, and the second estimate is given as follows. 
Let $y_1=(1-r)sf(t)+rf(s)$, $y_2=(1-r)f(t)+rf(s)$. 
Then $y_1<y_2$ and $y_2-y_1=(1-r)f(t)(1-s)\leq \delta$. 
If $y_0\leq (1-r)sf(t)+rf(s)$, we get $\|[v(x,y_1),a]\|\leq \epsilon$ and $\|[v(x,y_2),a]\|\leq \epsilon$. 
If $(1-r)sf(t)+rf(s)<y_0$, we get $\|v(x,y_1)v(x,y_2)^*-1\|<\epsilon$, and we get the second estimate.
For $\epsilon\leq r\leq 1$, since we have $y_0<rf(s)$, we get
$$(1-s+rs)\|[v(x,(1-r)sf(t)+rf(s)),a]\|<\epsilon,$$
$$(s-rs)\|[v(x,(1-r)sf(t)+rf(s))v(x,(1-r)f(t)+rf(s))^*,a]\|<2\epsilon.$$
Thus we get 
$$\|\Psi_t(a)(r,x,s)-a\|\leq \epsilon\max\{3,6\|a\|,4\|a\|+2\},$$
and the claim is shown. 
Thus we may and do regard $\Psi_t$ as a ucp map from $A$ to 
$C([0,1]\times \Sigma X)\otimes A$ satisfying $\Psi_t(a)(r,\Sigma x_0)=a$. 

Next we show that $t\mapsto \Psi_t(a)\in C([0,1]\times \Sigma X)\otimes A$ is continuous. 
Let $t_1\in [0,1)$. 
For $a\in A$ and $\epsilon>0$, we choose $y_0$ as before. 
Since $v(x,y)$ is uniformly continuous on $X\times [0,y_0+f(t_1)+1]$, there exists $0<\delta_2<1$ satisfying 
$\|v(x,y)-v(x,z)\|\leq \epsilon$ for any $(x,y), (x,z)\in X\times [0,y_0+f(t_1)+1]$ with $|y-z|<\delta_2$. 
Since $f$ is continuous, we choose $0<\delta_3$ satisfying $|f(t)-f(t_1)|<\delta_2$ for any $t\in [0,1)$ with $|t-t_1|<\delta_3$. 
Then for any $(r,x,s)\in I\times X\times [0,1)$ and $t_2\in [0,1)$ with $|t_2-t_1|<\delta_3$, we have
\begin{align*}
\lefteqn{\|\Psi_{t_1}(a)(r,x,s)-\Psi_{t_2}(a)(r,x,s)\| } \\
&\leq \| \Ad v(x,y_1)(a)-\Ad v(x,y_2)(a) \|\\
 &+\|\Ad v(x,y_1)v(x,z_1)^*(a)-\Ad v(x,y_2)v(x,z_2)^*(a)\|\\
 &=\|[v(x,y_2)^*v(x,y_1),a]\|+\|[v(x,z_2)v(x,y_2)^*v(x,y_1)v(x,z_1)^*,a]\|, 
\end{align*}
where $y_i=(1-r)sf(t_i)+rf(s)$ and $z_i=(1-r)f(t_i)+rf(s)$.
If $y_0\leq rf(s)$, we get 
$$\|\Psi_{t_1}(a)(r,x,s)-\Psi_{t_2}(a)(r,x,s)\|\leq 6\epsilon.$$ 
Assume $rf(s)<y_0$. 
Then we have $y_1,y_2,z_1,z_2\in [0,y_0+f(t_1)+1]$, and $|y_1-y_2|<\delta_2$, $|z_1-z_2|<\delta_2$. 
Thus we get 
$$\|\Psi_{t_1}(a)(r,x,s)-\Psi_{t_2}(a)(r,x,s)\|\leq 6\epsilon \|a\|,$$
and so the map $t\mapsto \Psi_t(a)$ is continuous. 

Finally we show that $\Psi_t$ is asymptotically multiplicative. 
Let $a_1,a_2\in A$, and let $\epsilon>0$ be given. 
We choose $z_0\geq 0$ such that $\|[v(x,y),a_i]\|<\epsilon$ holds for any $x\in X$, $y\geq z_0$, and $i=1,2$. 
We choose $t_0\in (0,1)$ satisfying $z_0<\epsilon f(t_0)$. 
Let $t_0<t<1$. 
Then we have 
\begin{align*}
\lefteqn{\|\Psi_t(a_1a_2)(r,x,s)-\Psi_t(a_1)(r,x,s)\Psi_t(a_2)(r,x,s)\|} \\
 &=\|\big((1-s+rs)a_1a_2+(s-rs)\Ad v(x,(1-r)f(t)+rf(s))^*(a_1a_2)\big)\\
&-\big((1-s+rs)a_1+(s-rs)\Ad v(x,(1-r)f(t)+rf(s))^*(a_1)\big)\\
&\times\big((1-s+rs)a_2+(s-rs)\Ad v(x,(1-r)f(t)+rf(s))^*(a_2)\big)\|.
\end{align*}
Setting $b_i=\Ad v(x,(1-r)f(t)+rf(s))^*(a_i)-a_i$ for $i=1,2$, we get 
\begin{align*}
\lefteqn{\|\Psi_t(a_1a_2)(r,x,s)-\Psi_t(a_1)(r,x,s)\Psi_t(a_2)(r,x,s)\|} \\
&=\|(1-s+rs)a_1a_2+s(1-r)(a_1+b_1)(a_2+b_2)
-(a_1+s(1-r)b_1)(a_2+s(1-r)b_2)\|\\
&=s(1-r)(1-s(1-r))\|b_1b_2\|. 
\end{align*}
If $1-\epsilon\leq  r\leq 1$, we have 
$$\|\Psi_t(a_1a_2)(r,x,s)-\Psi_t(a_1)(r,x,s)\Psi_t(a_2)(r,x,s)\|\leq 4\|a_1\|\|a_2\|\epsilon.$$
If $0\leq r\leq 1-\epsilon$, we have $z_0<(1-r)f(t)+rf(s)$, and $\|b_i\|\leq \epsilon$ for $i=1,2$, 
and so   
$$\|\Psi_t(a_1a_2)(r,x,s)-\Psi_t(a_1)(r,x,s)\Psi_t(a_2)(r,x,s)\|\leq \epsilon^2.$$
This implies 
$$\lim_{t\to 1}\sup_{(r,x,s)\in [0,1]\times \Sigma X}\|\Psi_t(a_1a_2)(r,x,s)-\Psi_t(a_1)(r,x,s)\Psi_t(a_2)(r,x,s)\|=0,$$
and hence $\Psi_t$ is asymptotically multiplicative.   

Now we know that $\Psi_t$ is an asymptotic morphism from $A$ to $C([0,1]\times \Sigma X)\otimes A$. 
Since $\Psi_t(a)(0,x,s)=\mu^u_t(a)(x,s)$ and $\Psi_t(a)(1,x,s)=\rho^{u,0}_{x,s}(a)$, we get the desired homotopy. 
\end{proof}

\begin{remark}\label{generalization} Let $Y$ be another compact metrizable space. 
Then we can generalize our arguments so far to the following setting. 
Let $B=C(Y)\otimes A$, and let $\Aut_{C(Y)}(B\otimes \K)$ be the set of $C(Y)$-linear automorphisms of $B\otimes \K$. 
Then we can construct a group homomorphism 
$$\Pi^Y_{A,X}:[X,U(B^\flat\cap A')]\to [X,\Omega\Aut_{C(Y)}(B\otimes \K)]$$ that is natural in $X$. 
Now the Dadarlat map $\overline{\chi}$ is generalized to 
$$\overline{\chi}^Y:[X,\Omega\Aut_{C(Y)}(B\otimes \K)]\to KK(A,SC(X,x_0)\otimes B),$$ 
which is a group homomorphism. 
On the other hand, we can construct an asymptotic morphism $\psi^{u,Y}$ for $u\in \Map(X,U(B^\flat\cap A'))$ 
from $A$ to $M_2(C(\Sigma X)\otimes B)$ in the same way, and we get 
$$KK(\psi^{u,Y})-KK(j_{B,\Sigma X})=\overline{\chi}^Y\circ \Pi^Y_{A,X}([u])\in KK(A,SC(X,x_0)\otimes C(Y)\otimes A).$$ 
\end{remark}

Since $[X,U(A_\flat)]$ is identified with $K_1(C(X,x_0)\otimes A_\flat)$ thanks to Lemma \ref{identification}, the Bott periodicity gives rise to 
an isomorphism 
$$\beta:[X,U(A_\flat)]\rightarrow [\Sigma^2X,U(A_\flat)].$$
On the other hand, we denote by $\beta'$ the isomorphism 
$$\beta':KK(A,SC(X,x_0)\otimes A)\rightarrow KK(A,SC(\Sigma^2X,\Sigma^2x_0)\otimes A).$$
given by the the Bott periodicity in the $KK$-side. 

\begin{lemma}\label{Bott} 
Let $A$ be a unital Kirchberg algebra, and let $(X,x_0)$ be a pointed compact metrizable space. 
Then the following diagram is commutative. 
$$\begin{CD}
[X,U(A_\flat)]@>\beta>> [\Sigma^2X,U(A_\flat)]\\
@V{\overline{\chi}\circ \Pi_{X}}VV @VV{\overline{\chi}\circ\Pi_{\Sigma^2 X}} V\\
KK(A,SC(X,x_0)\otimes A)@>\beta'>>KK(A,SC(\Sigma^2X,\Sigma^2x_0)\otimes A)
\end{CD}
$$
\end{lemma}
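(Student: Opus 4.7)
The plan is to pass through the $KK$-theoretic description from Theorem \ref{KK}, which identifies $\overline{\chi}\circ \Pi_X([u])$ with $b_{KK}(KK(\phi^u|_{SA}))$, and then invoke naturality of Bott periodicity. Under this identification, commutativity of the diagram becomes the assertion
$$b_{KK}\bigl(KK(\phi^{\beta(u)}|_{SA})\bigr)=\beta'\bigl(b_{KK}(KK(\phi^u|_{SA}))\bigr)\quad\text{in }\; KK(A,SC(\Sigma^2X,\Sigma^2x_0)\otimes A).$$

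First, I would observe that $b_{KK}$, being two suspensions composed with Bott periodicity acting on the first argument of $KK$, is natural in the coefficient algebra and therefore commutes with the coefficient Bott map $\beta'$. Thus it suffices to prove the analogous identity
$$KK(\phi^{\beta(u)}|_{SA})=\beta''\bigl(KK(\phi^u|_{SA})\bigr)\quad\text{in }\; KK(SA,SC(\Sigma^2X,\Sigma^2x_0)\otimes A),$$
where $\beta''$ denotes Bott periodicity in the coefficient variable, using the identification $C(\Sigma^2X,\Sigma^2x_0)\cong S^2C(X,x_0)$.

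Next I would exhibit a concrete representative of $\beta(u)$. Under the Lemma \ref{identification} isomorphism $[X,U(A_\flat)]\cong K_1(C(X,x_0)\otimes A_\flat)$, Bott periodicity corresponds to Kasparov product with the standard Bott generator of $KK(\C,S^2\C)$. This product can be realized by an explicit unitary in $U(S^2C(X,x_0)\otimes A_\flat)$ built from $u$ together with the Bott projection $P(s,z)$ that already appears in the construction of $\psi^u$ in Theorem \ref{KK}. With such a representative in hand, the associated asymptotic morphism $\phi^{\beta(u)}|_{SA}$ factors, up to asymptotic unitary equivalence, as the external product of $\phi^u|_{SA}$ with the standard Bott asymptotic morphism from $\C$ to $S^2\C$. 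Multiplicativity of $KK$ under external products and the definition of $\beta''$ as Kasparov product with the Bott class then close the argument.

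The main obstacle I anticipate is this explicit factorization: one must choose a workable representative of $\beta(u)$ at the level of unitaries in $C^bA$, then exhibit an asymptotic unitary equivalence implementing the factorization of $\phi^{\beta(u)}|_{SA}$ as an external product. Given the Bott-projection formulas already in play in the proof of Theorem \ref{KK}, this should be a parameter-reshuffling exercise in the spirit of that proof rather than a new conceptual step, but the bookkeeping is nontrivial. Once the factorization is in place, the rest of the argument is pure naturality.
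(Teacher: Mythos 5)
Your reduction is sound and in fact mirrors the strategy of the paper: via Theorem \ref{KK} and the observation that $b_{KK}$, being built from first-variable operations, commutes with Bott periodicity in the coefficient algebra, the lemma does come down to showing that the unitary-picture Bott map $\beta$ is compatible, under $[u]\mapsto KK(\phi^u|_{SA})$ (equivalently $[u]\mapsto KK(\psi^u)-KK(j_{\Sigma X})$), with the coefficient Bott map; and the paper likewise proves this by writing down an explicit representative of $\beta([u])$ out of $u$ and the Bott projection. The gap lies precisely in the step you flag as the main obstacle, and as you state it the step would fail: there is no ``standard Bott asymptotic morphism from $\C$ to $S^2\C$''. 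Since $S^2\C\cong C_0(\R^2)$ contains no non-zero projections (nor does its asymptotic corona), the Bott generator of $KK(\C,S^2\C)$ is represented only by a formal difference, e.g.\ the Bott projection $e_B\in M_2(C(S^2))$ minus a trivial rank-one projection; consequently $\phi^{\beta(u)}|_{SA}$, a single asymptotic morphism, cannot be asymptotically unitarily equivalent to ``the external product of $\phi^u|_{SA}$ with'' that class. What must be proved is an identity of differences, and this forces two issues your sketch does not address.

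Concretely, any unitary representative of $\beta([u])$ has the shape $u'=u_1u_2^*$, where $u_1$ is built from $e_B$ and $u$ (with isometries in $A_\flat$ absorbing the matrix amplification) and $u_2$ is the corresponding trivial unitary; the factors $u_1,u_2$ are genuine maps only on $X\times S^2$, not on $\Sigma^2X$. To compare the class of $u'$ with the external product you must (i) split the class of $u'$ into the contributions of $u_1$ and $u_2$, which requires an additivity statement for unitaries with $C(S^2)$-coefficients, i.e.\ for elements of $\Map(X,U(C(S^2)\otimes A_\flat))$, and (ii) identify the resulting difference inside the direct summand $KK(A,SC(\Sigma^2X,\Sigma^2x_0)\otimes A)$ of $KK(A,SC(X,x_0)\otimes C(S^2)\otimes A)$ arising from the split exact sequence $0\to C_0(I^2\setminus\partial I^2)\to C(S^2)\to\C\to 0$. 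In the paper this is exactly the role of Remark \ref{generalization} with $Y=S^2$: the maps $\Pi^{S^2}_{A,X}$ and $\overline{\chi}^{S^2}$ are group homomorphisms, which legitimizes the passage from $[u']$ to $[u_1]-[u_2]$, and this is followed by an explicit computation of the Bott projections $P(r,u_1)$ and $P(r,u_2)$ and a conjugation by a $3\times 3$ unitary built from isometries in $A_\flat$ that recognizes the difference as $\beta'(KK(\psi^u)-KK(j_{\Sigma X}))$. Appealing to ``multiplicativity of $KK$ under external products'' does not substitute for these steps; once (i) and (ii) are supplied, your outline becomes essentially the paper's proof transported from the $\psi^u$-picture to the $\phi^u$-picture.
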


\begin{proof}  
We identify $\Sigma^2X$ with $(X\times I^2)/((\{x_0\}\times I^2)\cup (X\times \partial I^2))$, 
and in particular $S^2$ with $I^2/\partial I^2$. 
We denote by $\{e^{(k)}_{ij}\}$ the canonical system of matrix units of the matrix algebra $M_k(\C)$. 
We set $e_B(s_1,s_2)=P(s_1,e^{2\pi \sqrt{-1}s_2})$ for $(s_1,s_2)\in I^2$, which is the Bott projection. 
We choose two isometries $S_1,S_2\in A_\flat$ with mutually orthogonal ranges.  
We denote by $\Phi$ the homomorphism from $M_2(\C)$ to $A_\flat$ sending $e^{(2)}_{ij}$ to $S_iS_j^*$. 
We set $q=S_1S_1^*+S_2S_2^*$. 

For a given $u\in \Map(X,U(A_\flat))$, we define $u_1,u_2\in U(C(X\times I^2,A_\flat))$ by 
$$u_1(x,s_1,s_2)=\Phi(e_B(s_1,s_2))(S_1u(x)S_1^*+S_2u(x)S_2^*)+\Phi(1-e_B(s_1,s_2))+1-q,$$
\begin{eqnarray*}
u_2(x,s_1,s_2)&=&\Phi(e^{(2)}_{11})(S_1u(x)S_1^*+S_2u(x)S_2^*)+\Phi(1-e^{(2)}_{11})+1-q\\
&=&S_1u(x)S_1^*+S_2S_2^*+1-q. 
\end{eqnarray*}
Then $u_1(x_0,s_1,s_2)=u_2(x_0,s_1,s_2)=1$, and for $(s_1,s_2)\in \partial I^2$ we have 
$$u_1(x,s_1,s_2)=u_2(x,s_1,s_2)=S_1u(x)S_1^*+S_2S_2^*+1-q.$$
Thus the map $u':=u_1u_2^*$ belongs to $\Map(\Sigma^2X,U(A_\flat))$, and we have $\beta([u])=[u'].$
Theorem \ref{KK} implies 
$$\overline{\chi}\circ \Pi_{A,X}([u])=KK(\psi^{u})-KK(j_{\Sigma X})\in KK(A,SC(X,x_0)\otimes A),$$ 
$$\overline{\chi}\circ\Pi_{A,\Sigma^2 X}([u'])=KK(\psi^{u'})-KK(j_{\Sigma^3 X})\in KK(A,SC(\Sigma^2X,\Sigma^2 x_0)\otimes A).$$ 

Through the split exact sequence 
$$0\to C_0(I^2\setminus \partial I^2)\rightarrow C(S^2)\rightarrow \C\rightarrow 0,$$
we regard $KK(A,SC(\Sigma^2X,\Sigma^2x_0)\otimes A)$ as a subgroup of 
$KK(A,SC(X,x_0)\otimes B)$, where $B=C(S^2)\otimes A$. 
Since 
$$u_1,u_2\in \Map(X,U(C(S^2)\otimes A_\flat))\subset \Map(X,U(B^\flat\cap A')),$$ 
we can apply the argument in Remark \ref{generalization} with $Y=S^2$. 
As there exists an obvious homomorphism 
$$[\Sigma^2X,\Omega\Aut(A\otimes \K)]\to [X,\Omega\Aut_{C(S^2)}(B\otimes \K)],$$ 
and the diagram 
$$
\begin{CD}
[\Sigma^2X,\Omega\Aut(A\otimes \K)]@>>> [X,\Omega\Aut_{C(S^2)}(B\otimes \K)]\\
@V{\overline{\chi}}VV @VV{\overline{\chi}^{S^2}}V\\
KK(A,SC(\Sigma^2X,\Sigma^2x_0)\otimes A)@>\mathrm{inclusion}>>KK(A,SC(X,x_0)\otimes B)
\end{CD}$$
is commutative, we get 
\begin{align*}
\overline{\chi}\circ \Pi_{\Sigma^2X}([u'])&=\overline{\chi}^{S^2}\circ \Pi^{S^2}_{A,X}([u'])  \\
 &=\overline{\chi}^{S^2}\circ \Pi^{S^2}_{A,X}([u_1])-\overline{\chi}^{S^2}\circ \Pi^{S^2}_{A,X}([u_2])\\
 &=KK(\psi^{u_1,S^2})-KK(\psi^{u_2,S^2}). 
\end{align*}
Recall that the two asymptotic morphisms $\psi^{u_1,S^2}$ and $\psi^{u_1,S^2}$ arise from the homomorphisms 
from $A$ to $M_2(C(\Sigma X\times S^2)\otimes A^\flat)$ given by the left multiplication of projections 
$P(r,u_1(x,s_1,s_2))$ and  $P(r,u_2(x,s_1,s_2))$ respectively.

Direct computation yields 
\begin{align*}
\lefteqn{P(r,u_1(x,s_1,s_2))} \\
 &=\big((1\otimes S_1)P(r,u(x))(1\otimes S_1^*)+(1\otimes S_2)P(r,u(x))(1\otimes S_2^*)\big)(1\otimes \Phi(e_B(s_1,s_2))) \\
 &+e^{(2)}_{11}\otimes (\Phi(1-e_B(s_1,s_2))+1-q), 
\end{align*}
$$P(r,u_2(x,s_1,s_2))=(1\otimes S_1)P(r,u(x))(1\otimes S_1^*)+e^{(2)}_{11}\otimes (S_2S_2^*+1-q).$$
Note that the two projections $e^{(3)}_{11}\otimes 1\otimes q$ and $(e^{(3)}_{11}+e^{(3)}_{22})\otimes 1\otimes 1$ 
are equivalent in $M_3(\C)\otimes \C1_{M_2(\C)}\otimes A_\flat$, and their complements are also equivalent 
as they are full properly infinite projections with the same $K_0$-class. 
Thus there exists a partial isometry $V\in M_3(\C)\otimes \C1_{M_2(\C)}\otimes A_\flat$ satisfying 
$V^*V=1-e^{(3)}_{11}\otimes 1\otimes q$ and $VV^*=e^{(3)}_{33}\otimes 1\otimes 1$. 
We set  
$$W=e^{(3)}_{11}\otimes 1\otimes S_1^*+e^{(3)}_{21}\otimes 1\otimes S_2^*+V,$$
which is a unitary in $M_3(\C)\otimes \C 1_{M_2}\otimes A_\flat$. 
Thus the two asymptotic morphisms 
$$A\ni a\mapsto e_{11}^{(3)}\otimes \psi_t^{u_i,S^2}(a)\in M_3(\C)\otimes M_2(C(\Sigma X\times S^2)\otimes A),\quad i=1,2$$
are asymptotically unitarily equivalent to the ones corresponding to the homomorphisms from $A$ to 
$M_3(\C)\otimes M_2(C(\Sigma X\times S^2)\otimes A^\flat)$ given by the left multiplication of the projections 
$$W(e^{(3)}_{11}\otimes P(r,u_i(x,s_1,s_2)))W^*,\quad i=1,2,$$
respectively.  

Let $\varphi:M_2(\C)\to M_3(\C)$ be the embedding map given by $\varphi(e^{(2)}_{ij})=e^{(3)}_{ij}$.  
Then 
\begin{align*}
\lefteqn{W(e^{(3)}_{11}\otimes P(r,u_1(x,s_1,s_2)))W^*} \\
 &=\varphi(e_B(s_1,s_2))\otimes P(r,u(x))+\varphi(1-e_B(s_1,s_2))\otimes e^{(2)}_{11}\otimes 1\\
 &+V(e^{(3)}_{11}\otimes e^{(2)}_{11}\otimes (1-q))V^*,
\end{align*}
\begin{align*}
\lefteqn{W(e^{(3)}_{11}\otimes P(r,u_2(x,s_1,s_2)))W^*} \\
 &=e^{(3)}_{11}\otimes P(r,u(x))+e^{(3)}_{22}\otimes e^{(2)}_{11}\otimes 1 
 +V(e^{(3)}_{11}\otimes e^{(2)}_{11}\otimes (1-q))V^*.
\end{align*}
Now we can see 
\begin{align*}
\lefteqn{KK(\psi^{u_1,S^2})-KK(\psi^{u_2,S^2})} \\
 &=KK(e_B\otimes \psi^u)+KK((1_{M_2(C(S^2))}-e_B)\otimes j_{\Sigma X})\\
 &-KK(1_{C(S^2)}\otimes \psi^u)-KK(1_{C(S^2)}\otimes j_{\Sigma X}) \\
 &=KK(e_B\otimes \psi^u)-KK(1_{C(S^2)}\otimes \psi^u)+KK(1_{C(S^2)}\otimes j_{\Sigma X})-KK(e_B\otimes j_{\Sigma X})\\
 &=\beta'(KK(\psi^u)-KK(j_{\Sigma X}))\\
 &=\beta'\circ \overline{\chi}\circ \Pi_{X}([u]),
\end{align*}
which finishes the proof.  
\end{proof}

Thanks to Lemma \ref{Bott}, it suffices to show the statement of Theorem \ref{main} for $n=0$ and $n=1$ in order to 
prove the general statement. 
Recall that Dadarlat showed that $\overline{\chi}:\pi_1(\Aut(A\otimes \K))\to KK(A,SA)$ is an isomorphism. 

\begin{lemma}\label{K_12} Let $A$ be a unital Kirchberg algebra. 
Then $\Pi_0$ is an isomorphism from $\pi_0(U(A_\flat))=K_1(A_\flat)$ to $\pi_0(\Omega\Aut(A\otimes \K))=\pi_1(\Aut(A\otimes \K))$.
\end{lemma}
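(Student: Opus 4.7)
The plan is to exploit the commutative diagram afforded by Theorem \ref{KK} together with Dadarlat's isomorphism so as to reduce the statement to the bijectivity of a map defined via asymptotic morphisms arising from unitaries in $A_\flat$.

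First, I would specialize Theorem \ref{KK} to $X=S^0$. Since $C(S^0,x_0)\cong\C$ and $\Sigma S^0\cong S^1$, one obtains
$$\overline{\chi}\circ\Pi_0([u]) \;=\; b_{KK}\bigl(KK(\phi^u|_{SA})\bigr) \;\in\; KK(A,SA).$$
By Remark \ref{Dadarlat}, $\overline{\chi}:\pi_1(\Aut(A\otimes\K))\to KK(A,SA)$ is an isomorphism, and $b_{KK}:KK(SA,A)\to KK(A,SA)$ is the Bott periodicity isomorphism. Hence the lemma reduces to showing that the homomorphism $\Phi:K_1(A_\flat)\to KK(SA,A)$ defined by $\Phi([u])=KK(\phi^u|_{SA})$ is an isomorphism of abelian groups.

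Next, I would prove $\Phi$ is bijective, following the strategy announced in the introduction of realizing $KK$-classes through asymptotic morphisms built from unitaries in $A_\flat$. For surjectivity, given $x\in KK(SA,A)$, I would represent $x$ by an asymptotic morphism $SA\to A$, extend it to an asymptotic morphism $C(\T,A)\to A$ by sending the generating unitary $z\otimes 1$ to a suitable lift, and then use the presence of a unital copy of $\cO_\infty$ in $A_\flat$ commuting with $A$ to absorb this extension into a single unitary $u\in U(A_\flat)$, yielding $\Phi([u])=x$. For injectivity, if $\Phi([u])=0$, then the $KK$-triviality of $\phi^u|_{SA}$ combined with a stable uniqueness result for asymptotic morphisms into Kirchberg algebras and Nakamura's refined homotopy theorem (Theorem \ref{refinedNakamura}) would produce a norm-continuous path in $U(A_\flat)$ from $u$ to $1$, so that $[u]=0$ in $K_1(A_\flat)$.

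The hard part will be establishing that $\Phi$ is an isomorphism. Surjectivity rests on the absorption properties of $A_\flat$, but assembling a specific unitary whose functional calculus realizes a prescribed $KK$-class still demands careful bookkeeping in the passage from an abstract asymptotic representative to the functional-calculus form. Injectivity is the more delicate direction: converting the triviality of $KK(\phi^u|_{SA})$ into an explicit homotopy inside $U(A_\flat)$ is the crucial technical step, and this is precisely where Nakamura-type homotopy theorems become indispensable, as they allow one to upgrade $KK$-theoretic triviality to a genuine path in the unitary group of the continuous asymptotic centralizer.
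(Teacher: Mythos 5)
Your reduction via Theorem \ref{KK} and Dadarlat's isomorphism is the same first move as the paper's, and your injectivity outline (KK-triviality $\Rightarrow$ asymptotic unitary equivalence with the trivial morphism $\Rightarrow$ refined Nakamura) names the right ingredients. But there is a genuine gap on the surjectivity side. You need to show that \emph{every} class in $KK(A,SA)$ (equivalently $KK(SA,A)$) is realized by an asymptotic morphism of the very special form $f\otimes a\mapsto f(u)a$ with $u\in U(A_\flat)$, i.e.\ implemented by functional calculus of a single unitary that asymptotically commutes with $A$ and restricts to the identity on the copy of $A$. Your proposed mechanism --- represent the class by some asymptotic morphism $SA\to A$, extend it to $C(\T,A)$, and ``absorb'' it into a unitary using a unital copy of $\cO_\infty$ in $A_\flat$ --- supplies no such mechanism: $\cO_\infty$-absorption removes $K$-theoretic obstructions to equivalence of projections or morphisms, but it does not convert an arbitrary asymptotic morphism into one of functional-calculus form, and nothing in the sketch forces the restriction to $1\otimes A$ to be the identity. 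The paper's proof of surjectivity rests on a concrete external input you do not identify: by \cite[Proposition 8.4]{IM2010}, every $x\in KK(A,SA)$ arises as the obstruction invariant of a $\Z^2$-action $\alpha$ on $A$ with $KK(\alpha_g)=KK(\id)$; since $\alpha_{(1,0)}$ is then asymptotically inner, $\alpha_{(1,0)}=\lim_t\Ad v(t)$, the unitary $u(t)=v(t)^*\alpha_{(0,1)}(v(t))$ lies in $U(A_\flat)$ and satisfies $\overline{\chi}(\Pi_0([u]))=x$. Some input of this kind (or an equivalent realization theorem) is indispensable; without it your surjectivity argument does not go through.

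On injectivity, your sketch is compatible with the paper but leaves open exactly the step that carries the weight: after Kirchberg--Phillips uniqueness gives unitaries $V_t$ in $M_2(C(S^1)\otimes A)$ intertwining $\psi^u$ with $j\oplus 0$, one must extract from them a \emph{continuous path} $v(s,t)\in U(A)$ with $v(0,t)=1$, $v(1,\cdot)-u\in C_0A$, and $\sup_s\|[v(s,t),a]\|\to 0$, before Theorem \ref{refinedNakamura} can be invoked. In the paper this is done by an explicit computation with the Bott projection: one shows the unitary $D(s,t)=V_t(s)R(s)\,\mathrm{diag}(u(t)^*,1)\,R(s)^{-1}$ is asymptotically diagonal, reads off its $(1,1)$-corner $d_1(s,t)$, and sets $v(s,t)=d_1(0,t)^{-1}d_1(s,t)$. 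Declaring that ``a stable uniqueness result plus Nakamura'' produces the path does not substitute for this construction; as written, both halves of your bijectivity claim remain unproved at their decisive points, with the missing realization result for surjectivity being the more serious omission.
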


\begin{proof} 
Since $S^0=\{1,-1\}$ with base point 1, a map $f\in \Map(S^0,Y)$ is identified with $f(-1)\in Y$, and 
we identify $\Map(S^0,Y)$ with $Y$. 
Recall that we may and do assume that $A$ is in the Cuntz standard form. 

The surjectivity of $\Pi_0$ was essentially proved in \cite[Proposition 8.4]{IM2010}. 
Namely, for any $x\in KK(A,SA)$, there exists a $\Z^2$-action $\alpha$ on $A$ such that 
$KK(\alpha_g)=KK(\id)$ for any $g\in \Z^2$, and $x$ is realized as an invariant $\Phi(\alpha)=x$ defined in \cite[p.393]{IM2010}. 
We first recall the definition of $\Phi(\alpha)$. 
Since $\alpha_{(1,0)}\otimes \id_\K$ is homotopic to $\id$ in $\Aut(A\otimes \K)$, we choose a continuous path 
$\{\gamma_t\}_{t\in [0,1]}$ from $\id$ to $\alpha_{(1,0)}\otimes \id_{\K}$. 
Then 
$$\sigma_t=\gamma_t^{-1}\circ (\alpha_{(0,1)}\otimes \id_\K)\circ \gamma_t\circ (\alpha_{(0,1)}^{-1}\otimes \id_\K)$$ 
is a loop in $\Aut(A\otimes \K)$ satisfying $\sigma_0=\id$, and $\Phi(\alpha)$ in \cite{IM2010} is defined by $\overline{\chi}([\sigma])$, 
which does not depend on the choice of $\gamma$. 

Since $KK(\alpha_{(1,0)})=KK(\id)$, there exists a continuous family of unitaries $\{v(t)\}_{t\in [0,1)}$ in $A$ satisfying 
$\lim_{t\to 1}\Ad v(t)=\alpha_{(1,0)}$. 
As before we can choose a norm continuous family of unitaries $\{V(t)\}_{t\in [0,1)}$ in $M(A\otimes \K)$ satisfying 
$V(t)=v(t)\otimes 1$ for $1/2\leq t<1$ and $V(0)=1$. 
Now we can adopt $\Ad V(t)$ for $\gamma_t$, and with this choice 
$$\sigma_t=\Ad (V(t)^*(\alpha_{(0,1)}\otimes \id_\K)(V(t))),$$
for $t\in [0,1)$. 
Note that for $1/2<t<1$, we have 
$$V(t)^*(\alpha_{(0,1)}\otimes \id_\K)(V(t))=v(t)^*\alpha_{(0,1)}(v(t))\otimes 1.$$ 
Thus setting $u(t)=v(t)^*\alpha_{(0,1)}(v(t))$ for $1/2\leq t<1$, and $u(t)=u(1/2)$ for $0\leq t<1/2$, 
we get $u\in U(A_\flat)$ satisfying $\overline{\chi}(\Pi_{0}([u]))=x$. 

We prove the injectivity of $\Pi_{0}$ now. 
Assume that $u\in U(A_\flat)$ and $[u]$ is in the kernel of $\Pi_{0}$. 
As in the proof of Theorem \ref{KK}, we may and do assume $u(0)=1$. 
Since 
$$0=\overline{\chi}\circ \Pi_{X}([u])=KK(\psi^u)-KK(j),$$
we get $KK(\psi^u)=KK(j)$. 
By the Kirchberg-Phillips classification theorem \cite{P00}, the asymptotic morphism $(\psi^u_t)$ and $j\oplus 0$ are 
asymptotically unitarily equivalent, and there exists a unitary path $\{V_t\}_{t\in [0,1)}$ in 
$M_2(C(S^1)\otimes A)$ satisfying 
\begin{equation}\label{A1}
\lim_{t\to1}\sup_{s\in S^1}\|V_t(s)P(s,u(t))\left(
\begin{array}{cc}
a &0  \\
0 &a 
\end{array}
\right)V_t(s)^*-\left(
\begin{array}{cc}
a &0  \\
0 &0 
\end{array}
\right)\|
=0,
\end{equation}
for any $a\in A$. 
In particular, we have the following by setting $a=1$:
$$\lim_{t\to1}\sup_{s\in [0,1]}\|[V_t(s)R(s)\left(
\begin{array}{cc}
u(t)^* &0  \\
0 &1 
\end{array}
\right)R(s)^{-1}
,e]\|=0.$$
Let 
$$D(s,t)=V_t(s)R(s)\left(
\begin{array}{cc}
u(t)^* &0  \\
0 &1 
\end{array}
\right)R(s)^{-1}.$$
Then $D$ is asymptotically diagonal, that is, there exist $d_1,d_2\in C(I\times [0,1),U(A))$ satisfying 
$$\lim_{t\to1}\sup_{s\in I}\|D(s,t)-\left(
\begin{array}{cc}
d_1(s,t) &0  \\
0 &d_2(s,t) 
\end{array}
\right)\|=0.$$
Since $V_t(1)=V_t(0)$ and 
$$D(0,t)=V_t(0)\left(
\begin{array}{cc}
u(t)^* &0  \\
0 &1 
\end{array}
\right),
$$we get 
$$D(1,t)=V_t(1)\left(
\begin{array}{cc}
1 &0  \\
0 &u(t)^* 
\end{array}
\right)=D(0,t)\left(
\begin{array}{cc}
u(t) &0  \\
0 &u(t)^* 
\end{array}
\right),
$$
and 
$$\lim_{t\to 1}\|d_1(1,t)-d_1(0,t)u(t)\|=0.$$ 
(\ref{A1}) implies 
$$\lim_{t\to1}\sup_{s\in I}\|
D(s,t)eR(s)
\left(
\begin{array}{cc}
u(t)au(t)^* &0  \\
0 &a 
\end{array}
\right)
R(s)^*D(s,t)^*
-\left(
\begin{array}{cc}
a &0  \\
0&0 
\end{array}
\right)
\|=0.$$
Since 
$$\lim_{t\to 1}\|u(t)au(t)^*-a\|=0,$$
we may replace $u(t)au(t)^*$ with $a$ in the above formula, and we get 
$$\lim_{t\to1}\sup_{s\in I}\|[d_1(s,t),a]\|=0,$$
for all $a\in A$. 
Let $v(s,t)=d_1(0,t)^{-1}d_1(s,t)$. 
Then $v$ is continuous on $I\times [0,1)$, $v(0,t)=1$, and $v(1,\cdot)-u(\cdot)\in C_0 A$. 
Moreover 
$$\lim_{t\to1}\sup_{s\in [0,1]}\|[v(s,t),a]\|=0,$$ 
for all $a\in A$. 
Thus Theorem \ref{refinedNakamura}, which is a refinement of Nakamura's homotopy theorem, implies that $u$ is homotopic to 1 in $U(A_\flat)$. 
\end{proof}

Instead of proving the statement of Theorem \ref{main} for $n=1$, we directly construct an isomorphism 
from $K_0(A_\flat)$ to $KK(A,A)$ by using asymptotic morphisms. 

\begin{lemma}\label{K0} Let $A$ be a unital Kirchberg algebra. 
For a non-zero projection $p\in A_\flat$, we denote by $\phi^p=(\phi^p_t)_{t\in [0,1)}$ 
the asymptotic morphism from $A$ to itself arising from the homomorphism $A\ni a \mapsto pa\in A^\flat$. 
Let $KK(\phi^p)$ be the $KK$-class for $\phi^p$. 
Then the map 
$$K_0(A_\flat)\ni K_0(p)\mapsto KK(\phi^p)\in KK(A,A),$$ 
is a group isomorphism.
\end{lemma}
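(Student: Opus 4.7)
The plan is to construct the inverse map via Kirchberg--Phillips and verify bijectivity directly. First I would check that $KK(\phi^p)$ is well defined and factors through $K_0(A_\flat)$. Lift $p \in A_\flat$ to a self-adjoint element $\tilde p \in C^b A$; spectrally truncating near $\{0,1\}$ we may assume $\tilde p(t)$ is a genuine projection in $A$ for $t$ sufficiently close to $1$, and the formula $\phi^p_t(a) = \tilde p(t)a$ is an asymptotic $*$-homomorphism because $\|[\tilde p(t),a]\| \to 0$ (the defining property of $A_\flat$). Any two lifts differ by a path in $C_0 A$, and a straight-line homotopy of lifts yields a homotopy of asymptotic morphisms, so $KK(\phi^p)$ depends only on $p$. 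If $p\sim q$ in $A_\flat$ via a partial isometry $w \in A_\flat$, lifting $w$ (and completing to a unitary using equivalence of the complementary properly infinite projections) produces an asymptotic unitary equivalence between $\phi^p$ and $\phi^q$.

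Additivity $KK(\phi^{p_1+p_2}) = KK(\phi^{p_1}) + KK(\phi^{p_2})$ for orthogonal projections is immediate from the direct-sum interpretation of $KK$. Because $A$ is Kirchberg, $A_\flat$ contains approximately central copies of $\cO_\infty$ and is therefore properly infinite; every element of $K_0(A_\flat)$ is represented by a single projection in $A_\flat$, so $[p]\mapsto KK(\phi^p)$ descends to a homomorphism $K_0(A_\flat) \to KK(A,A)$.

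Next I would establish surjectivity. By Kirchberg--Phillips every $x\in KK(A,A)$ is realized as $KK(\alpha)$ for some $*$-homomorphism $\alpha\colon A\to A$. Using the absorption $A\cong A\otimes \cO_\infty$, one constructs a continuous path of isometries $v\colon [0,1)\to A$ satisfying $\|v(t)^* a\, v(t) - \alpha(a)\| \to 0$ and $\|[v(t)v(t)^*,a]\|\to 0$ for every $a\in A$. The range projections $v(t)v(t)^*$ then represent a projection $p\in A_\flat$, and $v(t)^*$ asymptotically conjugates $\phi^p$ onto $\alpha$, yielding $KK(\phi^p) = KK(\alpha) = x$. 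Injectivity follows from the uniqueness half of Kirchberg--Phillips: if $KK(\phi^p) = 0$, then $\phi^p$ is asymptotically unitarily equivalent to the zero asymptotic morphism after absorption by an infinite repeat, and translating the conjugating unitaries back through the definition of $\phi^p$ shows that $p$ is Murray--von Neumann equivalent in $A_\flat$ to an absorbable projection, forcing $[p]_0 = 0$.

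The main obstacle will be surjectivity, specifically the construction of the approximately central isometry path $v(t)$ from an arbitrary endomorphism $\alpha$. This is a delicate diagonal argument that genuinely exploits $\cO_\infty$-absorption in addition to pure infiniteness, and it is the place where the full strength of the Kirchberg algebra hypothesis enters. Once this technology is in hand, the remaining verifications are routine consequences of the classification and uniqueness theorems.
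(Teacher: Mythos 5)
Your overall skeleton (Kirchberg--Phillips existence for surjectivity, uniqueness for injectivity, and converting asymptotic unitary equivalence into Murray--von Neumann equivalence inside $A_\flat$) is the same as the paper's, but the decisive step of surjectivity is exactly the one you leave as a black box: you assert that from a homomorphism $\alpha$ with $KK(\alpha)=x$ ``one constructs'' a path of isometries $v(t)$ with $v(t)^*av(t)\to\alpha(a)$ and asymptotically central range projections, and you yourself flag this as the main obstacle. That construction is the content of the lemma, not a routine add-on, and as stated it cannot even begin for general $A$ and general $x$: compression by isometries forces $\alpha$ to be unital (apply it to $a=1$), and a unital endomorphism with $KK(\alpha)=x$ exists only when the $K_0$-component of $x$ sends $[1_A]$ to $[1_A]$; so one must first reduce, via Lemma \ref{Morita}, to the Cuntz standard form $[1_A]=0$, a reduction you never invoke.

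The paper closes this gap with a short trick that makes the asymptotic centrality automatic rather than the outcome of a ``delicate diagonal argument''. In the standard form, choose unital endomorphisms $\rho_1,\rho_2$ with $KK(\rho_1)=x$ and $KK(\rho_2)=KK(\id)-x$, and isometries $S_1,S_2\in A$ satisfying the $\cO_2$ relation; then $\sigma=S_1\rho_1(\cdot)S_1^*+S_2\rho_2(\cdot)S_2^*$ is unital with $KK(\sigma)=KK(\id)$, so the asymptotic uniqueness theorem yields a unitary $z\in C^bA$ with $\|z(t)az(t)^*-\sigma(a)\|\to 0$. Setting $p=z^*S_1S_1^*z$, the centrality of $p$ modulo $C_0A$ is immediate because $S_1S_1^*$ commutes with the range of $\sigma$, and $\phi^p$ is, up to the isometry $z^*S_1$, asymptotically $\rho_1$, whence $KK(\phi^p)=x$. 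In other words, the ``technology'' you defer to is unnecessary: only the uniqueness theorem applied to $\sigma$ versus $\id$, plus this complement trick, is used. Your injectivity sketch is also vaguer than needed (``equivalent to the zero morphism after absorption'', ``absorbable projection''); the clean argument is that $KK(\phi^p)=KK(\phi^q)$ gives, via uniqueness applied to the compressions $a\mapsto v(t)^*av(t)$ and $a\mapsto w(t)^*aw(t)$ where $v,w\in A^\flat$ are isometries with $vv^*=p$, $ww^*=q$, a unitary $u\in C^bA$ intertwining them asymptotically, and then $wuv^*$ lies in $A_\flat$ and implements $p\sim q$, so $K_0(p)=K_0(q)$.
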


\begin{proof} We may assume that $A$ is in the Cuntz standard form as before. 
Let $p$ be a non-zero projection in $A_\flat$. 
Since $p$ commutes with a unital copy of $\cO_2$ in $A^\flat$, 
the $K_0$-class of $p$ in $A^\flat$ is trivial. 
Since $1$ and $p$ are properly infinite projections in the same class in $K_0(A^\flat)$, 
there exists an isometry $v\in A^\flat$ whose range projection is $p$. 
We may assume that $v(t)$ is an isometry for every $t\in [0,\infty)$. 
We set $\psi_t^v(a)=v(t)^*av(t)$ for $a\in A$. 
Then $\psi^v=(\psi^v_t)_{t\in [0,1)}$ is a ucp asymptotic morphism from $A$ to $A$. 
Since $\phi^p\oplus 0$ and $\psi^v\oplus 0$ are asymptotically unitarily equivalent as asymptotic morphisms from 
$A$ to $M_2(A)$, they give the same $KK$-class. 

Assume that $KK(\phi^p)=KK(\phi^q)$ for non-zero projections $p,q\in A_\flat$. 
Let $v,w$ be isometries in $A^\flat$ satisfying $vv^*=p$, $ww^*=q$.  
Then there exists a unitary $u\in C^bA$  satisfying 
$\lim_{t\to 1}\|u(t)\psi^v_t(a)u(t)^*-\psi^w_t(a)\|=0,\quad \forall a\in A.$
This means that $wuv^*\in A_\flat$, and $[p]=[q]$ in $K_0(A_\flat)$. 
Thus the map $K_0(p)\mapsto  KK(\phi^p)$ is injective. 

Assume $x\in KK(A,A)$ now. 
Since $A$ is in the Cuntz standard form, there exist unital endomorphisms 
$\rho_1,\rho_2$ of $A$ satisfying 
$$KK(\rho_1)=KK(\id)-KK(\rho_2)=x.$$
Taking isometries $S_1,S_2\in A$ satisfying the $\cO_2$ relation, we set 
$$\sigma(a)=S_1\rho_1(a)S_1^*+S_2\rho_2(a)S_2^*,\quad a\in A.$$
Since $KK(\sigma)=KK(\id)$, there exists a unitary $z\in C^bA$ satisfying 
$$\lim_{t\to1}\|\sigma(a)-z(t)az(t)^*\|=0,\quad \forall a\in A,$$
and so 
$$\lim_{t\to 1}\|\rho_1(a)-S_1^*z(t)az(t)^*S_1\|=0,\quad \forall a\in A.$$
This implies that $p=z^*S_1S_1^*z$ is a projection in $A_\flat$, and $KK(\phi^p)=x$. 
\end{proof}

\begin{remark}
The $KK$-group $KK(A,A)$ is a ring with the Kasparov product, 
and this product is given by composition of asymptotic morphisms. 
By the isomorphism $KK(A,A)\cong K_0(A_\flat)$ stated in the lemma above, 
we can transfer the product to $K_0(A_\flat)$, 
which is described as follows. 
Let $p,q\in A_\flat$ be projections and 
let $\phi^p,\phi^q$ be the asymptotic morphisms discussed above. 
We can find a homeomorphism $r_0:[0,1)\to[0,1)$ such that 
for any homeomorphism $r:[0,1)\to[0,1)$ with $r(t)\geq r_0(t)$ 
one has $\lim_t\lVert[p(r(t)),q(t)]\rVert=0$. 
Then, $(p(r(t))q(t))_{t\in[0,1)}$ gives rises to a projection in $A_\flat$, 
and the map $t\mapsto\phi^p_{r(t)}\circ\phi^q_t$ is an asymptotic morphism. 
By \cite{CH}, 
this asymptotic morphism is the composition of $\phi^p$ and $\phi^q$. 
Thus, the product of $K_0(p)$ and $K_0(q)$ in $K_0(A_\flat)$ is equal to 
the $K_0$-class of the projection $(p(r(t))q(t))_{t\in[0,1)}$. 
\end{remark}

\begin{proof}[Proof of Theorem \ref{main}] 
Since $\overline{\chi}:\pi_n(\Omega\Aut(A\otimes \K))\to KK(A,S^{n+1}A)$ is an isomorphism for $n\geq 0$ 
thanks to \cite[Theorem 5.9]{D}, it suffices to show that $\mu_n:=\overline{\chi}\circ \Pi_{A,n}$ 
is an isomorphism from $\pi_n(U(A_\flat))$ onto $KK(A,S^{n+1}A)$ for $n\geq 0$. 
We already know from Lemma \ref{K_12} that it is the case for $n=0$. 
We denote by $\mu_{-1}$ the isomorphism from $K_0(A_\flat)$ onto $KK(A,A)$ given in 
Lemma \ref{K0}. 

Let 
$$\beta_n:\pi_n(U(A_\flat))\rightarrow \pi_{n+2}(U(A_\flat)),$$  
$$\beta_n':KK(A,S^nA)\rightarrow KK(A,S^{n+2}A),$$ 
be the isomorphisms given by the Bott periodicity. 
Setting $\pi_{-1}(U(A_\flat)):=K_0(A_\flat)$, the isomorphism   
$\beta_{-1}:K_0(A_\flat)\to K_1(SA_\flat)=\pi_1(U(A_\flat))$ also makes sense, which associates  
$K_1(u_p)$ to $K_0(p)$, where $p\in A_\flat$ is a projection and $u_p(x)=e^{2\pi i x}p+1-p$. 
To prove the statement by induction, it suffices to show that the diagram 
\begin{equation}\label{commutative}
\begin{CD}
\pi_n(U(A_\flat))@>\beta_n>> \pi_{n+2}(U(A_\flat))\\
@V{\mu_n}VV @VV{\mu_{n+2}}V\\
KK(A,S^{n+1}A)@>\beta'_{n+1}>>KK(A,S^{n+3}A)
\end{CD},
\end{equation}
is commutative for any $n\geq -1$. 
Thanks to Lemma \ref{Bott}, it is indeed the case for $n\geq 0$, and $n=-1$ is the only remaining case. 

Let $p\in A_\flat$ be a projection. 
Then $\mu_{-1}(K_0(p))=KK(\phi^p)$ and 
$$\mu_1\circ \beta_{-1}(K_0(p))=\mu_1([u_p])=KK(\psi^{u_p})-KK(j_{S^2}).$$ 
Since
$$P(s,u_p(x))=e_B(s,x)\otimes p+e_{11}\otimes (1-p),$$
we get 
$$\psi^{u_p}_t(a)(x,s)=e_B(s,x)\otimes \phi^p_t(a)+e_{11}\otimes \phi^{1-p}_t(a).$$
Thus 
$$KK(\psi^{u_p})=\beta_0'(KK(\phi^p))+KK(j_{S^2}),$$
and the above diagram is commutative for $n=-1$. 
\end{proof}

\begin{problem} Let $G$ be a discrete group and let $\alpha$ be a $G$-action on $A$. 
Then $\alpha$ induces a $G$-action on $A_\flat$, and we denote by $(A_\flat)^G$ the $G$-fixed point subalgebra of 
$A_\flat$. 
When a projection $p$ (resp. unitary $u$) is in $(A_\flat)^G$, the asymptotic morphism $\phi^p$ (resp. $\psi^u$) 
is $G$-equivariant, and it gives rise to a class in $KK^G(A,A)$ (resp. $KK^G(A,SA)$). 
With this correspondence, do we have $K_*((A_\flat)^G)\cong KK^G(A,S^*A)$ under a reasonable assumption of 
$G$ and $\alpha$ ? 
\end{problem}

\section{The $K$-groups of the central sequence algebras}
In Lemma \ref{K_12} and Lemma \ref{K0}, we computed the $K$-groups of the continuous asymptotic centralizer 
algebra $A_\flat$. 
Following the same idea, we present its discrete analogue in this section. 
Our original motivation of this research was to correct the statements in \cite[Theorem 2.1, Proposition 3.1]{K04}, 
and we would like to thank Akitaka Kishimoto for having informed us of them. 

Let $A$ be a $C^*$-algebra, and let $\omega\in \beta\N\setminus \N$ be a free ultra-filter. 
We set 
$$c_0^\omega(A)=\{(a_n)\in \ell^\infty(\N,A);\;\lim_{n\to\omega}\|a_n\|=0\},$$  
$A^\omega=A/c_0^\omega(A)$, and $A_\omega=A^\omega\cap A'$. 
Then it is known that if $A$ is a unital Kirchberg algebra, the two algebras 
$A^\omega$ and $A_\omega$ are purely infinite and simple (see \cite[Proposition 6.2.6, Proposition 7.1.1]{R2002}). 

For the definition of the $KL$-groups, see \cite[2.4.8]{R2002} and \cite[Definition 2.3]{L}. 

\begin{theorem}\label{central sequence} 
Let $A$ be a unital Kirchberg algebra and let $\omega\in \beta \N\setminus \N$ be a free ultra-filter. 
Then there exist injective group homomorphisms $\Lambda_{A,i}:K_i(A_\omega)\to KL(A,S^iA^\omega)$ for $i=0,1$. 
If moreover $A$ is in the UCT class $\cN$ of Rosenberg-Schochet \cite{RS}, 
the two maps $\Lambda_{A,0}$ and $\Lambda_{A,1}$ are isomorphisms. 
\end{theorem}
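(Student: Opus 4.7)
The plan is to imitate the constructions of Lemmas \ref{K_12} and \ref{K0}, replacing the continuous asymptotic centralizer $A_\flat$ by the ultrafilter centralizer $A_\omega$ and continuous asymptotic morphisms $A\to A$ by genuine $*$-homomorphisms $A\to A^\omega$. The reason the target group is $KL$ rather than $KK$ is that the $c_0^\omega$-indeterminacy in lifting elements of $A_\omega$ to bounded sequences produces exactly the $\Pext$-ambiguity that $KL=KK/\Pext$ kills, in line with the Dadarlat-Loring perspective on $KL$.

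For $i=0$, I would lift a projection $p\in A_\omega$ to a bounded sequence $(p_n)$ of projections (adjusted by functional calculus as in the proof of Lemma \ref{lift}) and define $\phi^p:A\to A^\omega$ by $a\mapsto[(p_na)_n]$. Centrality of $p$ in $A^\omega$ ensures $\phi^p$ is multiplicative, and $\Lambda_{A,0}([p])$ is set to be the image of $KK(\phi^p)$ in $KL(A,A^\omega)$. For $i=1$, given a unitary $u\in A_\omega$ with lift $(u_n)$, I would use the Bott construction $\psi^u(a)(s)=[(P(s,u_n)\,\mathrm{diag}(a,a))_n]$ from Theorem \ref{KK}, which gives a $*$-homomorphism $A\to M_2(SA^\omega)$; subtracting the constant contribution produces a class in $KK(A,SA^\omega)$ whose image in $KL(A,SA^\omega)$ is $\Lambda_{A,1}([u])$. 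Well-definedness under change of lift, additivity, and the group homomorphism property all follow from the Cuntz-standard-form and $\cO_\infty$-absorption tricks already used in Section 3, together with the observation that two genuine $*$-homomorphisms whose differences lie in a $\Pext$-trivial family coincide in $KL$.

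For injectivity, I would invoke the Kirchberg-Phillips classification of unital $*$-homomorphisms into a purely infinite simple algebra up to asymptotic unitary equivalence by $KL$-class: if $\Lambda_{A,0}([p])=\Lambda_{A,0}([q])$, then $\phi^p$ and $\phi^q$ are asymptotically unitarily equivalent in $A^\omega$ after $\cO_\infty$-stabilization, and a reindexing/diagonal argument along $\omega$ extracts a partial isometry $v\in A_\omega$ with $v^*v=q$ and $vv^*=p$, so $[p]=[q]$ in $K_0(A_\omega)$. The $i=1$ case is essentially identical after replacing $A$ by $SA$ and applying the suspension isomorphism.

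The surjectivity assertion under UCT would be shown by the identification $KL(A,S^iA^\omega)\cong\Hom_\Lambda(\underline{K}(A),\underline{K}(S^iA^\omega))$ together with Kirchberg-Phillips realization of any prescribed total-$K$-theory map by a unital $*$-homomorphism $A\to A^\omega$ (and by $A\to SA^\omega\otimes\cO_\infty$-type targets for $i=1$). The principal obstacle lies precisely here: one must upgrade such a homomorphism, whose image a priori lies in $A^\omega$, to one whose image commutes with $A$, i.e., produce the realizing projection or unitary inside $A_\omega$ itself. This requires a reindexing/absorption argument showing that any separable subalgebra of $A^\omega$ can be unitarily conjugated so as to lie in $A_\omega$, an ultrafilter analogue of Kirchberg's $\cO_\infty$-absorption combined with the usual $\varepsilon$-test along $\omega$. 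Once this absorption step is in place, the remainder of the argument is a direct translation of the continuous-parameter reasoning of Section 3.
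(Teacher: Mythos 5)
Your $i=0$ construction and injectivity argument are essentially the paper's (genuine homomorphisms attached to central projections, injectivity of the map from homomorphisms modulo approximate unitary equivalence into $KL$ via Lin's theorem, and a diagonal argument extracting a partial isometry in $A_\omega$), but two of your key steps fail as stated. First, the lemma you propose for surjectivity --- that any separable subalgebra of $A^\omega$ can be unitarily conjugated into $A_\omega$ --- is false: every projection in $A_\omega$ has trivial class in $K_0(A^\omega)\cong K_0(A)^\omega$ (it commutes with a unital copy of $\cO_2$), whereas a separable subalgebra such as $C^*(q)$, for a projection $q\in A$ with $[q]\neq 0$, contains projections of nontrivial class, and conjugation preserves that class. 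The paper never conjugates images into $A_\omega$: it first factors the prescribed $\Hom_\Lambda$-morphism through a separable simple $\cO_\infty$-absorbing subalgebra $B\subset A^\omega$ (Lemma \ref{separable}) so that Kirchberg--Phillips existence applies, and then invokes Kirchberg's dilation lemma (Lemma \ref{dilation}): a unital homomorphism $\rho:A\to A^\omega$ is exactly of the form $\rho(a)=s^*as$ for an isometry $s\in A^\omega$, and multiplicativity forces $p=ss^*$ to commute with $A$, so $p\in A_\omega$ automatically. Also, your rationale for $KL$ (a $\Pext$-ambiguity coming from $c_0^\omega$-lifting) is a red herring: $p$ and $u$ are genuine elements of $A^\omega$ and the maps are genuine homomorphisms, so no lifting indeterminacy occurs; $KL$ enters because Lin's approximate uniqueness theorem, which classifies homomorphisms into the non-separable algebra $C(X)\otimes A^\omega$ up to approximate unitary equivalence, is a $KL$-statement.

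The more serious gap is $i=1$. The reduction ``replace $A$ by $SA$ and apply the suspension isomorphism'' does not make sense: $SA$ is not a unital Kirchberg algebra, and central sequence algebras do not commute with suspension, so the $i=0$ case cannot simply be quoted. The paper instead encodes $u\in U(A_\omega)$ by the loop $\rho^u(a)(t)=u(t)au(t)^*$ along a path $u(t)$ in $U(A^\omega)$ from $1$ to $u$ (possible since $[u]=0$ in $K_1(A^\omega)$), works in $\Hom(A,C(S^1)\otimes A^\omega)_*$ with the concatenation group structure (Lemma \ref{sum}), and --- this is the crux --- proves injectivity by converting approximate unitary equivalence of $\rho^u$ with $\iota_{A,S^1}$ into a homotopy from $1$ to $u$ inside $U(A_\omega)$ using Nakamura's homotopy theorem \cite[Theorem 7]{N}, whose Lipschitz bound is precisely what makes the diagonal argument along $\omega$ work. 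Your Bott-projection homomorphism $\psi^u$ could serve to define $\Lambda_{A,1}$, but your injectivity claim bottoms out in the same missing ingredient: after the uniqueness theorem one must pass from almost-conjugating families back to a path of unitaries in $U(A_\omega)$, and without a Nakamura-type length-controlled homotopy (compare Lemma \ref{K_12} and Theorem \ref{refinedNakamura} in the continuous setting) the diagonalization fails. Surjectivity for $i=1$ also needs steps your sketch omits: the paper realizes a given class by a homomorphism into $C(S^1)\otimes A^\omega$, upgrades approximate to exact unitary equivalence in $C([0,1],A)^\omega$, reads off $u$ from the endpoint values, and compares the two pictures through the $K$-theory of the inclusion $C(S^1)\otimes A^\omega\to C(S^1,A)^\omega$ (Lemma \ref{criterion}).
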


\begin{remark} When $A$ is in $\cN$, we actually have $KL(A,S^iA^\omega)=KK(A,S^iA^\omega)$ for $i=0,1$. 
Indeed, for a discrete abelian group $G$, the ultra-product $G^\omega$ is defined by  
$$G^\omega=\prod_{n\in \N}G/\bigoplus_\omega G,$$
where 
$$\bigoplus_\omega G=\{(g_n)\in \prod_{n\in \N} G;\; \{n\in\N;\;g_n=0\}\in \omega\}.$$
It is known that $G^\omega$ is always algebraically compact (see \cite[Corollary 1.12]{F}), 
and in consequence $\Pext(H,G^\omega)$ is always trivial for any $H$ (see \cite[Proposition 5.4]{S}). 
It is routine work to show $K_*(A^\omega)=K_*(A)^\omega$ if $A$ is purely infinite and simple.  
Thus the UCT implies $KL(A,S^iA^\omega)=KK(A,S^iA^\omega)$. 
\end{remark}

\begin{example} When $G$ is a finite group, we have $G^\omega=G$. 
Thus for the Cuntz algebra $\cO_n$, Theorem \ref{central sequence} and the UCT imply 
$$K_0((\cO_n)_\omega)\cong K_1((\cO_n)_\omega)\cong \Z_{n-1}$$
for finite $n$, and 
$$K_0((\cO_\infty)_\omega)\cong \Z^\omega,\quad K_1((\cO_\infty)_\omega)=\{0\}.$$
\end{example}

For the proof of Theorem \ref{central sequence}, we need to go through R{\o}rdam's $H(A,B)$ 
introduced in \cite[p.431]{R95}. 
Here we give a modified definition of it adapted to our case. 
From now on we assume that $A$ and $\omega$ are as in Theorem \ref{central sequence} 
(we do not assume that $A$ is in the UCT class $\cN$ for the moment), 
and $X$ is a compact metrizable space. 
As before, it suffices to prove Theorem \ref{central sequence} when $A$ is in the Cuntz standard form, 
and we keep this assumption throughout this section. 
Let $\Hom(A,C(X)\otimes A^\omega)$ be the set of unital homomorphisms from $A$ to $C(X)\otimes A^\omega$, 
and let $H(A,C(X)\otimes A^\omega)$ be the quotient space of $\Hom(A,C(X)\otimes A^\omega)$ by approximate unitary equivalence. 
For $\rho\in \Hom(A,C(X)\otimes A^\omega)$, we denote by $[\rho]$ its equivalence class in $H(A,C(X)\otimes A^\omega)$. 
The direct sum of $\rho,\sigma\in \Hom(A,C(X)\otimes A^\omega)$ is defined (up to equivalence) by choosing a unital copy of the Cuntz algebra 
$\cO_2$ with the standard generators  $\{S_1,S_2\}$, and setting 
$$(\rho\oplus \sigma)(a)=S_1\rho(a)S_1^*+S_2\sigma(a)S_2^*,\quad a\in A.$$

We denote by $\kappa_X:H(A,C(X)\otimes A^\omega)\to KL(A,C(X)\otimes A^\omega)$ the map associating $KL(\rho)$ to $[\rho]$. 
Then $\kappa_X$ is a semigroup homomorphism.  
Since $A^\omega$ is in the Cuntz standard form, Kirchberg's embedding theorem \cite[Theorem 6.3.11]{R2002} implies 
that there exists an element in $\Hom(A,C(X)\otimes A^\omega)$ 
factoring through $\cO_2$, which gives $0\in H(A,C(X)\otimes A^\omega)$. 

Huaxin Lin's result \cite[Theorem 3.14]{L} shows the following lemma. 

\begin{lemma} The map $\kappa_X:H(A,C(X)\otimes A^\omega)\to KL(A,C(X)\otimes A^\omega)$ is injective. 
\end{lemma}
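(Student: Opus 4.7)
The plan is to derive the injectivity of $\kappa_X$ as a direct application of Huaxin Lin's uniqueness theorem \cite[Theorem 3.14]{L}. That theorem asserts, in the form relevant here, that for a unital Kirchberg algebra $A$ and a suitable unital target $B$, two unital homomorphisms $\rho,\sigma:A\to B$ with $KL(\rho)=KL(\sigma)$ are approximately unitarily equivalent. Translated into our notation, this is exactly the injectivity we want: if $\kappa_X([\rho])=\kappa_X([\sigma])$, then $[\rho]=[\sigma]$ in $H(A,C(X)\otimes A^\omega)$.

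The first step is to verify that the target $B=C(X)\otimes A^\omega$ fits the hypotheses of Lin's theorem. Since $A$ is a unital Kirchberg algebra, $A^\omega$ is purely infinite and simple by \cite[Proposition 6.2.6]{R2002}, and the central-sequence copies of $\cO_\infty$ in $A^\omega$ make $A^\omega$ (and hence $C(X)\otimes A^\omega$) $\cO_\infty$-absorbing in the sense Lin requires, with an abundance of unitaries available for perturbation. Once this is in place, the second step is purely formal: given $\rho,\sigma$ with $KL(\rho)=KL(\sigma)$, Lin's theorem produces unitaries $u_n\in C(X)\otimes A^\omega$ with $\|u_n\rho(a)u_n^*-\sigma(a)\|\to 0$ for every $a\in A$, which is exactly the definition of $[\rho]=[\sigma]$ in $H(A,C(X)\otimes A^\omega)$.

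The main obstacle I anticipate is matching our situation to the precise technical hypotheses of \cite[Theorem 3.14]{L}, which is typically stated for a unital separable target. Since $C(X)\otimes A^\omega$ is not separable, I would get around this by first passing to a separable unital $C^*$-subalgebra $B_0\subset C(X)\otimes A^\omega$ containing $\rho(A)\cup\sigma(A)$ and chosen so that Lin's theorem applies to homomorphisms $A\to B_0$, and then transporting the approximate unitary equivalence back to $C(X)\otimes A^\omega$. The $\cO_\infty$-absorbing central-sequence structure of $A^\omega$ should allow one to arrange $B_0$ with the required absorption and classifiability properties while preserving the $KL$-equality of $\rho$ and $\sigma$ after factoring through $B_0$.
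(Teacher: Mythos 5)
Your proposal takes essentially the same route as the paper, whose entire proof is a direct appeal to Lin's uniqueness theorem \cite[Theorem 3.14]{L}: equality of $KL$-classes yields approximate unitary equivalence, which is precisely injectivity of $\kappa_X$. The paper invokes Lin's result directly for the target $C(X)\otimes A^\omega$, so the reduction to a separable subalgebra you anticipate is not needed (and, if carried out, would require care, since $KL$-equality in $C(X)\otimes A^\omega$ does not automatically descend to a separable subalgebra containing the images).
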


The semigroup $H(A,A^\omega)$ is nothing but Kirchberg's $EK_\omega(A,A)$, which is actually a group thanks to  
Kirchberg's dilation lemma \cite[Corollary 4]{Kir}. 
We present an easy generalization of it here. 

\begin{lemma}\label{dilation} Let $\rho,\sigma\in \Hom(A,C(X)\otimes A^\omega$). 
Then for any $\epsilon>0$ and a finite subset $F\subset A$, there exists $v\in C(X)\otimes A^\omega$ satisfying 
$\|\rho(a)-v^*\sigma(a)v\|<\epsilon$ for any $a\in F$. 
If moreover $X=\{pt\}$, there exists an isometry $s\in A^\omega$ satisfying $\rho(a)=s^*\sigma(a)s$ for any $a\in A$.  
\end{lemma}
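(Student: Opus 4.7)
The case $X=\{pt\}$ is immediate: it is exactly Kirchberg's dilation lemma \cite[Corollary 4]{Kir} applied to the unital homomorphisms $\rho,\sigma:A\to A^\omega$. The plan for general $X$ is to patch pointwise dilations via a partition of unity, using orthogonal isometries drawn from a suitable relative commutant inside $A^\omega$.

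Given $\epsilon>0$ and a finite subset $F\subset A$, I use joint norm continuity of $(x,a)\mapsto\rho_x(a)$ and $(x,a)\mapsto\sigma_x(a)$ on the compact set $X\times F$, together with compactness of $X$, to fix a finite open cover $U_1,\dots,U_n$ of $X$, points $x_i\in U_i$, and a continuous partition of unity $\{\phi_i\}_{i=1}^n$ subordinate to $\{U_i\}$ such that both $\|\rho_x(a)-\rho_{x_i}(a)\|$ and $\|\sigma_x(a)-\sigma_{x_i}(a)\|$ are at most $\epsilon/4$ whenever $x\in U_i$ and $a\in F$. Applying the pointwise case to each pair $\rho_{x_i},\sigma_{x_i}:A\to A^\omega$ yields isometries $s_i\in A^\omega$ satisfying $s_i^*\sigma_{x_i}(a)s_i=\rho_{x_i}(a)$ for every $a\in A$.

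Next I choose orthogonal isometries $T_1,\dots,T_n\in A^\omega$ with $\sum_iT_iT_i^*=1$ that commute with $\sigma_x(a)$ for all $a\in F$ and $x\in X$. The set $\{\sigma_x(a):a\in F,\,x\in X\}\subset A^\omega$ is the image of the compact set $F\times X$ under the continuous map $\sigma$, hence norm-compact; let $E\subset A^\omega$ be the separable C$^*$-subalgebra it generates. Since $A^\omega$ has the property (invoked already for $A_\flat$ in Lemma \ref{identification}) that the relative commutant of any separable subalgebra admits a unital copy of $\cO_\infty$, the $T_i$ can be chosen inside $A^\omega\cap E'$. The element
\[
v:=\sum_{i=1}^n\phi_i^{1/2}\,T_is_i\in C(X)\otimes A^\omega
\]
then satisfies $v^*v=\sum_i\phi_i=1$ by virtue of $T_i^*T_j=\delta_{ij}$, and the identity $T_i^*\sigma_x(a)T_j=\delta_{ij}\sigma_x(a)$ collapses $v^*\sigma(a)v$ to $\sum_i\phi_i(x)\,s_i^*\sigma_x(a)s_i$ for $a\in F$. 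Replacing $\sigma_x(a)$ with $\sigma_{x_i}(a)$ on the support of $\phi_i$ turns this into $\sum_i\phi_i(x)\rho_{x_i}(a)$ up to error $\epsilon/4$, which is itself within $\epsilon/4$ of $\rho_x(a)$; the resulting estimate $\|\rho(a)-v^*\sigma(a)v\|<\epsilon$ is uniform in $x$.

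The main technical point I expect to require care is the production of the $T_i$, namely verifying that $A^\omega\cap E'$ contains $n$ orthogonal isometries summing to the unit for a separable $E\subset A^\omega$. This reduces to the standard ultrapower fact that separable subalgebras of $A^\omega$ can be complemented by unital copies of $\cO_\infty$, coming from the $\cO_\infty$-absorption of $A$ via a reindexing argument. Everything else is routine partition-of-unity bookkeeping on top of the pointwise case.
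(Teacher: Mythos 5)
Your argument is correct, but it takes a genuinely different route from the paper. The paper does not patch pointwise dilations: it sets $B=\sigma(A)\subset C(X)\otimes A^\omega$, uses simplicity of $A$ to invert $\sigma$ onto its image, and applies R{\o}rdam's \cite[Proposition 6.3.3]{R2002} to the nuclear unital homomorphism $\mu=\rho\circ\sigma^{-1}:B\to C(X)\otimes A^\omega$, which is legitimate because the target is properly infinite; this yields the approximate statement for general $X$ in one stroke, and the exact statement for $X=\{pt\}$ is then obtained from the approximate one by the usual diagonal sequence argument over the ultrafilter. In particular the paper's logic runs in the opposite direction from yours: you take the exact point case as input (by citing Kirchberg's dilation lemma \cite[Corollary 4]{Kir} --- defensible, since the paper itself presents this lemma as an easy generalization of that result, though it is precisely the statement the paper chooses to re-derive) and then globalize by a partition of unity, uniform continuity of $x\mapsto\rho_x(a),\sigma_x(a)$ on compact $X$, and isometries $T_1,\dots,T_n$ with orthogonal ranges in $A^\omega\cap E'$ for the separable subalgebra $E$ generated by $\{\sigma_x(a):x\in X,\ a\in F\}$; that relative commutant fact is indeed available (it is exactly what the paper uses in the proof of Lemma \ref{separable}). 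Your approach buys a more hands-on, localized argument that only needs the ultrapower dilation at finitely many points, while the paper's approach is shorter, avoids the gluing bookkeeping, and keeps the general-$X$ case independent of the exact point case. Two small corrections to your write-up: the normalization $\sum_i T_iT_i^*=1$ cannot be arranged inside a unital copy of $\cO_\infty$ for $n\geq 2$ (a $K_0$ obstruction), but it is also never used --- only $T_i^*T_j=\delta_{ij}1$ enters your computation of $v^*\sigma(a)v$; and the commutant property you invoke is the one for separable subalgebras of $A^\omega$ (as in Lemma \ref{separable}), not literally the hypothesis of Lemma \ref{identification}.
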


\begin{proof} 
Let $B=\sigma(A)\subset C(X)\otimes A^\omega$, and let $\mu=\rho\circ \sigma^{-1}$. 
Then $\mu$ is a nuclear unital homomorphism from $B$ to $C(X)\otimes A^\omega$, and $C(X)\otimes A^\omega$ is properly infinite. 
As in \cite[Proposition 6.3.3]{R2002}, we can find $v\in A^\omega$ satisfying 
$\|v^*bv-\mu(b)\|<\epsilon$ for any $b\in \sigma(F)$, and the first statement holds. 
The second statement follows from the usual diagonal sequence argument. 
\end{proof}

When $X$ has a base point $x_0$, we set 
$$\Hom(A,C(X)\otimes A^\omega)_*=\{\varphi\in \Hom (A,C(X)\otimes A^\omega);\; \forall\; a\in A,\; \varphi(a)(x_0)=a\},$$
$$H(A,C(X)\otimes A^\omega)_*=\{[\varphi]\in H(A,C(X)\otimes A^\omega);\; \varphi\in \Hom(A,C(X)\otimes A^\omega)_*\}.$$
Let $\iota_A:A\to A^\omega$ be the inclusion map, and let 
$\iota_{A,X}:A\to C(X)\otimes A^\omega$ be a homomorphism given by $\iota_{A,X}(a)=1_{C(X)}\otimes a$. 
Then $H(A,C(X)\otimes A^\omega)_*$ is the preimage of $[\iota_A]\in H(A,A^\omega)$ for the homomorphism 
$$H(A,C(X)\otimes A^\omega)\to H(A,A^\omega),$$
induced by the evaluation map at $x_0$. 

\begin{remark}\label{aue} It is easy to show the following statement: 
two homomorphisms $\rho,\sigma\in \Hom(A,C(X,A^\omega))_*$ are approximately unitarily equivalent 
in $\Hom(A,C(X,A^\omega))$ if and only if there exists a sequence of unitaries $\{u_n\}_{n=1}^\infty$ 
in $C(X,A^\omega)$ such that $u_n(x_0)=1$ for all $n\in \N$ and 
$$\lim_{n\to \infty}u_n\rho(a)u_n^*=\sigma(a),\quad \forall a\in A.$$
\end{remark}

Since $\kappa_X$ is injective, homotopic homomorphisms in $\Hom(A,C(X)\otimes A^\omega)$ give the same class in $H(A,C(X)\otimes A^\omega)$. 
Using this fact, we can introduce a group structure into $H(A,C(\Sigma X)\otimes A^\omega)_*$ as in \cite[p.269]{Bl}, 
though it is not a subsemigroup of $H(A,C(\Sigma X)\otimes A^\omega)$ with its original addition coming from a direct sum. 

For $\rho,\sigma\in \Hom(A,C(\Sigma X,A^\omega))_*$, we define $\rho\uplus \sigma\in \Hom(A,C(S^1,A^\omega))_*$ by  
$$(\rho\uplus \sigma)(a)(x,t)=\left\{
\begin{array}{ll}
\rho(a)(x,2t) , &\quad  t\in [0,1/2]\\
\sigma(a)(x,2t-1) , &\quad t\in [1/2,1]
\end{array}
\right..$$
Then the operation $\uplus$ on $H(A,C(\Sigma X)\otimes A^\omega)_*$ given by $[\rho]\uplus[\sigma]:=[\rho\uplus\sigma]$ 
is well-defined, and we get a group $(H(A,C(\Sigma X)\otimes A^\omega)_*,\uplus)$ with unit $[\iota_{A,\Sigma X}]$. 

\begin{lemma}\label{sum} The map 
$$S\kappa_{X}:H(A,C(\Sigma X)\otimes A^\omega)_*\to KL(A,SC(X)\otimes A^\omega)$$ 
defined by $S\kappa_{X}([\rho])=\kappa_{\Sigma X}([\rho])-\kappa_{\Sigma X}([\iota_{A,\Sigma X}])$ 
is an injective group homomorphism from $(H(A,C(\Sigma X)\otimes A^\omega)_*,\uplus)$ to $KL(A,SC(X)\otimes A^\omega)$. 
If moreover $\kappa_{\Sigma X}$ is an isomorphism, so is $S\kappa_{X}$. 
\end{lemma}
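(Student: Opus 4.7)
The plan has three parts: show that $S\kappa_X$ lands in the stated subgroup and is a group homomorphism with respect to $\uplus$, show it is injective, and then show it is surjective when $\kappa_{\Sigma X}$ is an isomorphism. Well-definedness of the codomain is immediate: for $\rho\in\Hom(A,C(\Sigma X)\otimes A^\omega)_*$, the homomorphisms $\rho$ and $\iota_{A,\Sigma X}$ agree at $\Sigma x_0$, so the difference $\kappa_{\Sigma X}([\rho])-\kappa_{\Sigma X}([\iota_{A,\Sigma X}])$ lies in the kernel of evaluation at $\Sigma x_0$, namely $KL(A,SC(X)\otimes A^\omega)$.

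For the homomorphism property, the core identity to establish is
\[
\kappa_{\Sigma X}([\rho \uplus \sigma]) = \kappa_{\Sigma X}([\rho]) + \kappa_{\Sigma X}([\sigma]) - \kappa_{\Sigma X}([\iota_{A, \Sigma X}]),
\]
which I would prove by a standard H-space/Eckmann--Hilton argument. First, the based self-map $(x,t)\mapsto(x,\min(2t,1))$ of $\Sigma X$ is based-homotopic to the identity, so pulling $\rho$ back along it identifies $\rho\uplus\iota_{A,\Sigma X}$ with $\rho$ up to homotopy in $\Hom_*$, giving $\kappa([\rho\uplus\iota_{A,\Sigma X}])=\kappa([\rho])$ and symmetrically $\kappa([\iota_{A,\Sigma X}\uplus\sigma])=\kappa([\sigma])$. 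Choosing Cuntz isometries $T_1,T_2\in A_\omega$ (available since $A_\omega$ is a unital Kirchberg algebra) and using additivity of $\kappa$ under Cuntz direct sums, one then compares the two doubled concatenations $(\rho\uplus\sigma)\oplus_{T_1,T_2}(\iota_{A,\Sigma X}\uplus\iota_{A,\Sigma X})$ and $(\rho\uplus\iota_{A,\Sigma X})\oplus_{T_1,T_2}(\iota_{A,\Sigma X}\uplus\sigma)$: they coincide on the slice $t\in[0,1/2]$, and on $t\in[1/2,1]$ are related by a swap of the two $T_i$-slots that can be absorbed by a rotation homotopy in the $\{T_iT_j^*\}$-subalgebra, with base-point conditions arranged to keep the homotopy in $\Hom_*$. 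Comparing $\kappa$-classes yields $\kappa([\rho\uplus\sigma])+\kappa([\iota_{A,\Sigma X}])=\kappa([\rho])+\kappa([\sigma])$, i.e.\ the displayed identity, from which the homomorphism property of $S\kappa_X$ follows.

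For injectivity, suppose $S\kappa_X([\rho])=0$, so $\kappa_{\Sigma X}([\rho])=\kappa_{\Sigma X}([\iota_{A,\Sigma X}])$. The injectivity of $\kappa_{\Sigma X}$ from the preceding lemma produces a sequence of unitaries $u_n\in C(\Sigma X)\otimes A^\omega$ with $u_n\rho(a)u_n^*\to \iota_{A,\Sigma X}(a)$ in norm for each $a\in A$. Setting $v_n(y):=u_n(\Sigma x_0)^*u_n(y)$ gives unitaries with $v_n(\Sigma x_0)=1$; evaluating the hypothesis at $\Sigma x_0$ gives $u_n(\Sigma x_0)\,a\,u_n(\Sigma x_0)^*\to a$, and a two-step triangle inequality then shows $v_n\rho v_n^*\to \iota_{A,\Sigma X}$, so $[\rho]=[\iota_{A,\Sigma X}]$ in $H(A,C(\Sigma X)\otimes A^\omega)_*$ by Remark \ref{aue}.

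For surjectivity when $\kappa_{\Sigma X}$ is an isomorphism, take $y\in KL(A,SC(X)\otimes A^\omega)$ and lift $y+\kappa_{\Sigma X}([\iota_{A,\Sigma X}])$ to $[\rho]\in H(A,C(\Sigma X)\otimes A^\omega)$ via the inverse of $\kappa_{\Sigma X}$. Since $y$ vanishes under evaluation at $\Sigma x_0$, the class $[\mathrm{ev}_{\Sigma x_0}\circ\rho]$ satisfies $\kappa([\mathrm{ev}_{\Sigma x_0}\circ\rho])=\kappa([\iota_A])$ in $KL(A,A^\omega)$; by injectivity of $\kappa$ on $H(A,A^\omega)$, there is an approximate unitary equivalence between $\mathrm{ev}_{\Sigma x_0}\circ\rho$ and $\iota_A$, and a diagonal/$\omega$-saturation argument together with Lemma \ref{dilation} upgrades this to a single unitary $u\in A^\omega$ with $u\,\rho(a)(\Sigma x_0)\,u^*=a$ for every $a\in A$. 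Setting $\tilde\rho:=\Ad(1\otimes u)\circ\rho$ places us in $\Hom_*$ without altering the $H$-class, and hence $S\kappa_X([\tilde\rho])=y$. The most delicate step is the homomorphism property, where one must carefully carry out the slot-swap rotation homotopy between the two doubled concatenations while keeping all intermediate homomorphisms in $\Hom_*$.
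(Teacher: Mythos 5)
Your route is essentially the paper's: the whole content is the identity $\kappa_{\Sigma X}([\rho\uplus\sigma])+\kappa_{\Sigma X}([\iota_{A,\Sigma X}])=\kappa_{\Sigma X}([\rho])+\kappa_{\Sigma X}([\sigma])$, which the paper also proves by reparametrization homotopies plus a rotation of the Cuntz isometries interchanging the two summands (the paper's rotation is an explicit unitary in $C(\Sigma X,A)$ giving an exact unitary equivalence of the two doubled maps, yours is a homotopy of homomorphisms; both suffice, since only $KL$-classes are compared --- which also means your insistence that the interchange homotopy stay in $\Hom(A,C(\Sigma X)\otimes A^\omega)_*$ is unnecessary, and indeed the doubled maps are not in that set anyway). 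Your injectivity argument is more elaborate than needed (equality in $H(A,C(\Sigma X)\otimes A^\omega)_*$ is just equality in $H(A,C(\Sigma X)\otimes A^\omega)$, so injectivity of $\kappa_{\Sigma X}$ gives it at once, without producing base-point-fixing unitaries), and your surjectivity argument correctly fleshes out the identification of $H(A,C(\Sigma X)\otimes A^\omega)_*$ with the preimage of $[\iota_A]$ under evaluation that the paper records before Remark \ref{aue}; there the citation of Lemma \ref{dilation} is slightly misplaced (it yields isometries, not unitaries) --- the upgrade from approximate to exact unitary equivalence at the base point is the usual diagonal sequence argument over $\omega$, which the paper uses elsewhere in the same section.

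The one genuine error is the claim that Cuntz isometries $T_1,T_2$ with $T_1T_1^*+T_2T_2^*=1$ can be chosen in $A_\omega$ ``since $A_\omega$ is a unital Kirchberg algebra''. $A_\omega$ is not a Kirchberg algebra (it is not separable), and, more importantly, a unital copy of $\mathcal{O}_2$ in $A_\omega$ would force $[1_{A_\omega}]=0$ in $K_0(A_\omega)$, which fails in general: by the results of this very section $K_0(A_\omega)$ embeds into $KL(A,A^\omega)$ with $[1_{A_\omega}]\mapsto KL(\iota_A)$, and this is nonzero whenever $\underline{K}(A)\neq0$; the Cuntz standard form assumption makes $[1_A]=0$ in $K_0(A)$, not $[1_{A_\omega}]=0$ in $K_0(A_\omega)$ (this is why the paper takes only isometries with orthogonal ranges when it works inside $A_\omega$). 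The error is harmless for your argument, because nothing in the interchange and rotation computations uses that the isometries commute with $A$: take the unital copy of $\mathcal{O}_2$ inside $A$ (available by the standing Cuntz-standard-form assumption), exactly as the paper does, and with that replacement your proof is correct.
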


\begin{proof} Let $\rho,\sigma \in \Hom(A,C(\Sigma X)\otimes A^\omega)_*$. 
We claim 
$$KK(\rho\uplus \sigma)+KK(\iota_{A,\Sigma X})=KK(\rho)+KK(\sigma),$$
which is probably well-known, but we include the proof here for the reader's convenience. 
Let $\mu,\rho', \sigma'\in \Hom(A,C(\Sigma X)\otimes A^\omega)_*$ be given by 
$$\mu(a)(x,t)=\left\{
\begin{array}{ll}
 \rho(a)(x,4t), &\quad  t\in [0,1/4]\\
 a, &\quad  t\in [1/4,2/4]\\
\sigma(a)(x,4t-2) , &\quad t\in [1/2,3/4]\\
a,&\quad t\in [1/4,1]
\end{array}
\right., 
$$
$$\rho'(a)(x,t)=\left\{
\begin{array}{ll}
\rho(a)(x,4t) , &\quad t\in [0,1/4] \\
a , &\quad t\in [1/4,1]
\end{array}
\right.,
$$
$$\sigma'(a)(x,t)=\left\{
\begin{array}{ll}
a , &\quad t\in [0,1/2]\cup [3/4,1] \\
\sigma(a)(x,4t-2) , &\quad t\in [1/2,3/4]
\end{array}
\right..
$$
Then $\mu$ is homotopic to $\rho \uplus \sigma$, $\rho'$ is homotopic to $\rho$, and $\sigma'$ is homotopic to $\sigma$.  

We choose isometries $S_1,S_2\in A$ satisfying the $\cO_2$ relation, and set 
$$(\mu\oplus \iota_{A,\Sigma X})(a)=S_1\mu(a)S_1^*+S_2aS_2^*,$$
$$(\rho'\oplus \sigma')(a)=S_1\rho'(a)S_1^*+S_2\sigma'(a)S_2^*.$$
Let $c(t)=\cos(\pi t/2)$, $s(t)=\sin(\pi t/2)$. 
For $t\in [0,1]$, we set 
$$T_1(t)=\left\{
\begin{array}{ll}
S_1 , &\quad  t\in [0,1/4]\\
c(4t-1)S_1+s(4t-1)S_2 , &\quad t\in [1/4,1/2] \\
S_2 , &\quad t\in [1/2,3/4]\\
s(4t-3)S_1+c(4t-3)S_2,&\quad  t\in [3/4,1]
\end{array}
\right., 
$$ 
$$T_2(t)=\left\{
\begin{array}{ll}
S_2 , &\quad  t\in [0,1/4]\\
c(4t-1)S_2-s(4t-1)S_1 , &\quad t\in [1/4,1/2] \\
-S_1 , &\quad t\in [1/2,3/4]\\
s(4t-3)S_2-c(4t-3)S_1,&\quad  t\in [3/4,1]
\end{array}
\right.. 
$$ 
Then $T_1$ and $T_2$ are isometries in $C(\Sigma X,A)$ satisfying the $\cO_2$ relation. 
Let 
$$u=T_1S_1^*+T_2S_2^*\in U(C(\Sigma X,A)).$$ Then we get 
$u(\mu\oplus \iota_{A,\Sigma X})(a)u^*=(\rho'\oplus \sigma')(a)$, which shows the claim.  

The claim implies  
$$KL(\rho\uplus \sigma)-KL(\iota_{A,\Sigma X})=(KL(\rho)-KL(\iota_{A,\Sigma X}))+(KL(\sigma)-KL(\iota_{A,\Sigma X})),$$
which shows that $S\kappa_{X}$ is a group homomorphism. 
Since $\kappa_{\Sigma X}$ is injective, so is $S\kappa_{X}$, and if $\kappa_{\Sigma X}$ is surjective, so is $S\kappa_{X}$ too. 
\end{proof}

Let $p\in A_\omega$ be a non-zero projection. 
Since $A^\omega\cap \{p\}'$ has a unital copy of the Cuntz algebra $\cO_2$, its $K_0$-class in $K_0(A^\omega)$ is trivial. 
Since $K_0(p)=K_0(1)=0$ in $K_0(A^\omega)$ and $A^\omega$ is purely infinite simple, 
there exists an isometry $v\in A^\omega$ whose range projection is $p$. 
Then $\psi^v(a)=v^*av$ is an element of $\Hom (A,A^\omega)$. 
As in the proof of Lemma \ref{K0}, we can show that the map $\nu_0:K_0(A_\omega)\to H(A,A^\omega)$ defined by 
$\nu_0(K_0(p))=[\psi^v]$ is actually a well-defined injective map.

\begin{lemma} The map $\nu_0:K_0(A_\omega)\to H(A,A^\omega)$ is a group isomorphism. 
\end{lemma}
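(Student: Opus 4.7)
My plan is to address the two remaining content points for this lemma---that $\nu_0$ is a group homomorphism and that it is surjective---since well-definedness and injectivity are indicated in the preceding discussion. For the homomorphism property, given nonzero projections $p_1, p_2 \in A_\omega$, I would use the pure infiniteness of $A_\omega$ (together with its unital copy of $\cO_\infty$) to replace the $p_i$'s by orthogonal MvN-equivalent representatives $p'_1, p'_2 \in A_\omega$. Choosing isometries $v_i \in A^\omega$ with $v_i v_i^* = p_i'$ and fixing $\cO_2$-isometries $S_1, S_2 \in A^\omega$, the element $w := v_1 S_1^* + v_2 S_2^*$ is an isometry with $ww^* = p_1' + p_2'$ and $\psi^w = \psi^{v_1} \oplus \psi^{v_2}$, yielding $\nu_0([p_1] + [p_2]) = \nu_0([p_1]) + \nu_0([p_2])$.

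For surjectivity, my plan is to exploit the fact already cited in the text that $H(A, A^\omega)$ is a group. Given $[\rho] \in H(A, A^\omega)$, form $[\rho_1] := [\iota_A] - [\rho] \in H(A, A^\omega)$ and pick a representative $\rho_1 \in \Hom(A, A^\omega)$, so that $\rho_1 \oplus \rho \sim_{\mathrm{aue}} \iota_A$. Fixing $\cO_2$-isometries $T_1, T_2 \in A^\omega$, a diagonal-sequence argument (using the countable saturation of $A^\omega$, or directly by lifting to $\ell^\infty(\N, A)$) promotes the aue sequence of unitaries to a single unitary $u \in A^\omega$ satisfying
$$u\bigl(T_1 \rho_1(a) T_1^* + T_2 \rho(a) T_2^*\bigr) u^* = a$$
in $A^\omega$ for every $a \in A$. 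Setting $V_i := uT_i$ gives $\cO_2$-isometries in $A^\omega$ with $V_1 \rho_1(a) V_1^* + V_2 \rho(a) V_2^* = a$. Conjugating this identity by $V_2^*$ on the left and $V_2$ on the right (using $V_1^* V_2 = 0$ and $V_2^* V_2 = 1$) yields $V_2^* a V_2 = \rho(a)$, so $\psi^{V_2} = \rho$. A direct computation from the same identity also shows $a V_2 V_2^* = V_2 \rho(a) V_2^* = V_2 V_2^* a$, hence $p := V_2 V_2^* \in A_\omega$ and therefore $\nu_0([p]) = [\psi^{V_2}] = [\rho]$.

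The main obstacle I anticipate is the promotion from the approximating sequence of unitaries to a single unitary $u$ realizing the identity exactly in $A^\omega$; this is a standard but delicate application of the countable saturation of the ultra-product $A^\omega$ (valid because $A$ is separable and $\omega$ is a free ultra-filter on $\N$). Otherwise, the argument is essentially a direct transcription of the construction $p = z^* S_1 S_1^* z$ in the proof of Lemma \ref{K0}, with the asymptotic unitary $z$ replaced by the single unitary $u$ and with the group inverse in $H(A, A^\omega)$ playing the role of the $KK$-inverse used in the continuous case.
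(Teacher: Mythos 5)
Your proof is correct, and its two halves line up with the paper's proof to different degrees. The homomorphism half is essentially the paper's argument: the paper takes isometries $T_1,T_2\in A_\omega$ with orthogonal ranges and sets $w=T_1v_1S_1^*+T_2v_2S_2^*$, which is exactly your orthogonalization $p_i'=T_ip_iT_i^*$ folded into a single formula, followed by the same computation $\psi^w=\psi^{v_1}\oplus\psi^{v_2}$ and $ww^*\sim p_1\oplus p_2$ in $A_\omega$. For surjectivity you take a genuinely different (though equally valid) route. The paper gets it in one stroke from the second statement of Lemma \ref{dilation} applied with $\sigma=\iota_A$: there is an isometry $s\in A^\omega$ with $\rho(a)=s^*as$ for all $a\in A$, and the multiplicative-domain identity $\lVert (1-ss^*)as\rVert^2=\lVert \rho(a^*a)-\rho(a)^*\rho(a)\rVert=0$ shows $ss^*\in A_\omega$, so $\nu_0(K_0(ss^*))=[\rho]$. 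You instead invoke the group structure of $H(A,A^\omega)$ (Kirchberg's $EK_\omega(A,A)$, which the paper justifies by the same dilation lemma), form the inverse class $[\iota_A]-[\rho]$, upgrade the approximate unitary equivalence $\rho_1\oplus\rho\sim\iota_A$ to an exact one by the saturation/diagonal-sequence argument you flag as the delicate point, and then extract the isometry $V_2$ and the projection $V_2V_2^*\in A_\omega$; this is in effect a transplant of the surjectivity step of Lemma \ref{K0}. What the paper's route buys is brevity: the exactification is already packaged inside Lemma \ref{dilation}, so no inverse class or separate saturation step is needed. What your route buys is that it makes the analogy with the continuous case of Lemma \ref{K0} completely explicit and never uses the dilation lemma beyond the already-cited fact that $H(A,A^\omega)$ is a group.
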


\begin{proof} Lemma \ref{dilation} shows that $\nu_0$ is surjective, and it suffices to show that $\nu_0$ is a group homomorphism. 
Let $p_1,p_2\in A_\omega$ be non-zero projections, and let $v_1,v_2\in A^\omega$ be isometries satisfying $v_iv_i^*=p_i$, $i=1,2$. 
We choose a unital copy of the Cuntz algebra $\cO_2$ in $A$ with the canonical generators $\{S_1,S_2\}$, and set 
$$\rho(a)=S_1\psi^{v_1}(a)S_1^*+S_2\psi^{v_2}(a)S_2^*.$$
By construction, we have $[\rho]=\nu_0(K_0(p_1))+\nu_0(K_0(p_2))$. 
We choose two isometries $T_1,T_2\in A_\omega$ with mutually orthogonal ranges, and set 
$$w=T_1v_1S_1^*+T_2v_2S_2^*\in A^\omega.$$
Then $w$ is an isometry satisfying $w^*aw=\rho(a)$ for any $a\in A$, and 
$$ww^*=T_1p_1T_1^*+T_2p_2T_2^*,$$
which shows $[\rho]=\nu_0(K_0(ww^*))=\nu_0(K_0(p_1)+K_0(p_2))$. 
\end{proof}

Let $u\in U(A_\omega)$. 
Since $A^\omega\cap \{u\}'$ contains a unital copy of the Cuntz algebra $\cO_2$, 
the $K_1$-class of $u$ in $K_1(A^\omega)$ is trivial. 
Thus we can choose a continuous path $u(t)$ in $U(A^\omega)$ with $u(0)=1$ and $u(1)=u$. 
Then $\rho^u(a)(t)=u(t)au(t)^*$ gives an element in $\Hom(A,C(S^1,A^\omega))_*$. 
If $\{u'(t)\}_{t\in [0,1]}$ is another continuous path from 1 to $u$ in $U(A^\omega)$, 
we have $u'(t)au'(t)^*=\Ad w(t)\circ \rho^u(a)$ with $w(t)=u'(t)u(t)^*$. 
Since $w\in \Map(S^1,U(A^\omega))$, the class $[\rho^u]\in H(A,C(S^1,A^\omega))_*$ 
does not depend on the choice of the path $\{u(t)\}_{t\in [0,1]}$. 

Assume $v\in U(A_\omega)$ with $K_1(u)=K_1(v)$. 
Since $A_\omega$ is purely infinite simple, there exist self-adjoint $a,b\in A_\omega$ with $0\leq a,b <2\pi$ 
satisfying $v=ue^{ia}e^{ib}$. 
We can choose a path $v(t)=u(t)e^{ita}e^{itb}$ connecting 1 and $v$ in $U(A^\omega)$, and with this choice, we get 
$\rho^v=\rho^u$. 
Thus $\nu_1(K_1(u))=[\rho^u]$ gives a well-defined map 
$$\nu_1:K_1(A_\omega)\to H(A,C(S^1)\otimes A^\omega)_*.$$

\begin{lemma} The map $\nu_1$ from $K_1(A_\omega)$ to $H(A,C(S^1)\otimes A^\omega)_*$ is an injective group homomorphism. 
\end{lemma}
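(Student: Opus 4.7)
For the homomorphism property, I take $u_1, u_2 \in U(A_\omega)$ with chosen paths $u_i(t)$ in $U(A^\omega)$ from $1$ to $u_i$. Since $K_1$ is abelian and $K_1(u_1) + K_1(u_2) = K_1(u_2 u_1)$, I construct a concrete path from $1$ to $u_2 u_1$ by concatenating $t \mapsto u_1(2t)$ on $[0, 1/2]$ with $t \mapsto u_2(2t-1) u_1$ on $[1/2, 1]$; the two pieces match at $t = 1/2$. Conjugation of $a \in A$ by this path equals $u_1(2t) a u_1(2t)^*$ on the first half, and, exploiting $u_1 \in A_\omega \subset A'$ to cancel the outer factors, equals $u_2(2t-1) a u_2(2t-1)^*$ on the second half. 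This is precisely $(\rho^{u_1} \uplus \rho^{u_2})(a)(t)$, so by path-independence of the class $[\rho^{u_2 u_1}]$ we conclude $\nu_1(K_1(u_1) + K_1(u_2)) = \nu_1(K_1(u_1)) \uplus \nu_1(K_1(u_2))$.

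For injectivity, I suppose $\nu_1(K_1(u)) = [\iota_{A, S^1}]$. By Remark \ref{aue} there exist unitaries $w_n \in U(C(S^1, A^\omega))$ with $w_n(x_0) = 1$ such that $\lim_n \sup_{t \in I} \|w_n(t) u(t) a u(t)^* w_n(t)^* - a\| = 0$ for every $a \in A$. Setting $y_n(t) = w_n(t) u(t)$ yields a continuous path $I \to U(A^\omega)$ from $1$ to $u$ whose values asymptotically commute with $A$ uniformly in $t$. It then suffices to promote $\{y_n\}$ to a single continuous path $z : I \to U(A_\omega)$ from $1$ to $u$, since such a path witnesses $K_1(u) = 0$ in $K_1(A_\omega)$. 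I would do this by lifting each $y_n(t)$ to a representative sequence $y_n(t)^{(k)} \in U(A)$ jointly continuous in $(t,k)$ (via a Bartle--Graves selection followed by polar decomposition), enumerating a dense subset $\{a_j\} \subset A$, and extracting a diagonal reindexing $k \mapsto n(k)$ using compactness of $I$, so that the resulting path lies in $U(A_\omega)$ and still connects $1$ to $u$.

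The main obstacle is this last diagonal/reindexing step: one must simultaneously preserve continuity in $t$, unitarity, the prescribed boundary values $1$ and $u$, and asymptotic centrality with respect to all of $A$. This parallels the role of Theorem \ref{refinedNakamura} in the injectivity argument of Lemma \ref{K_12} for the continuous centralizer $A_\flat$; in the discrete setting I expect a standard $\varepsilon$-test or $\omega$-saturation argument, using the separability of $C(I,A)$, to furnish the required path in $U(A_\omega)$ and thereby complete the proof of injectivity.
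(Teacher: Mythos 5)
Your homomorphism argument is fine and is essentially the paper's: one rewrites a concatenated path as a path for the product unitary (the paper chooses the two paths to be constant on half of $I$ so that their pointwise product is the concatenation; your version, using that $u_1$ commutes exactly with $A$ inside $A^\omega$, amounts to the same computation), and then invokes path-independence of $[\rho^{u}]$.

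The injectivity half has a genuine gap at precisely the step you flag. From $\nu_1(K_1(u))=0$ you correctly extract unitaries $w_n\in C(S^1,A^\omega)$ with $w_n(0)=1$ and $\sup_t\|w_n(t)u(t)au(t)^*w_n(t)^*-a\|\to0$, i.e.\ paths $y_n(t)=w_n(t)u(t)$ from $1$ to $u$ that are asymptotically central as $n\to\infty$. But these paths come with no control whatsoever on their modulus of continuity: the $w_n$ produced by approximate unitary equivalence may oscillate arbitrarily wildly in $t$ as $n\to\infty$. A continuous path in $U(A_\omega)$, lifted to representatives, forces an essentially uniform (in the coordinate index) modulus of continuity, so your proposed diagonal reindexing $k\mapsto n(k)$ of jointly continuous lifts $y_{n(k)}^{(k)}(t)$ need not define a \emph{continuous} map $I\to U(A_\omega)$; neither an $\varepsilon$-test nor $\omega$-saturation can manufacture the missing equicontinuity, since the statement one would need to saturate --- ``for every finite $F\subset A$ and $\varepsilon>0$ there is an $(F,\varepsilon)$-central path from $1$ to $u$ of uniformly bounded length'' --- is not a consequence of what you have. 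This is exactly where the paper inserts Nakamura's homotopy theorem \cite[Theorem 7]{N}: from the data $(y_n)$ it produces, for each finite $F$ and $\varepsilon>0$, a path $w$ in $U(A^\omega)$ with $w(0)=1$, $w(1)=u$, $\|[w(t),x]\|<\varepsilon$ for $x\in F$, and the crucial uniform bound $\Lip(w)<6\pi$; only with this uniform Lipschitz control does the diagonal sequence argument yield a continuous path in $U(A_\omega)$ from $1$ to $u$, hence $K_1(u)=0$. (This is also the role the refined Nakamura theorem plays in the injectivity part of Lemma \ref{K_12}, so your instinct about the parallel is right, but in the present lemma the appeal to Nakamura is not optional --- it is the missing ingredient your sketch leaves unproved.)
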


\begin{proof} We first show that $\nu_1$ is a group homomorphism. 
Let $u,v\in U(A_\omega)$. 
We choose continuous paths $\{u(t)\}_{t\in [0,1]}$ and $\{v(t)\}_{t\in[0,1]}$ in $U(A^\omega)$ satisfying 
$u(0)=1$, $u(t)=u$ for $t\in [1/2,1]$, $v(t)=1$ for $[0,1/2]$, and $v(1)=v$. 
Then $\{u(t)v(t)\}_{t\in [0,1]}$ is a continuous path from 1 to $uv$ in $U(A^\omega)$, and 
$[\rho^{uv}]=[\rho^u\uplus\rho^v]$. 
Thus $\nu_1$ is a group homomorphism. 

Next we show that $\nu_1$ is injective. 
Assume $u\in U(A_\omega)$ with $\nu_1(K_1(u))=0$. 
We choose a continuous path $\{u(t)\}_{t\in [0,1]}$ in $U(A^\omega)$, and define $\rho^u$ as before. 
Then $\rho^u\in \Hom(A,C(S^1,A^\omega))_*$ with $[\rho^u]=[\iota_{A,S^1}]$, and as noted in Remark \ref{aue}, 
there exists a sequence of unitaries 
$\{v_n\}_{n=1}^\infty$ in $C(S^1,A^\omega)$ such that $v_n(0)=1$ and 
$$\lim_{n\to\infty}\sup_{t\in [0,1]}\|v_n(t)u(t)xu(t)^*v_n(t)^*-x\|=0,\quad \forall x\in A.$$
This with Nakamura's Theorem \cite[Theorem 7]{N} shows that for any positive number $\epsilon$ and any finite 
subset $F\subset A$, there exists a continuous path $\{w(t)\}_{t\in [0,1]}$ in $U(A^\omega)$ such that 
$w(0)=1$, $w(1)=u$, $\|[w(t),x]\|<\varepsilon$ for any $t\in [0,1]$, $x\in F$, and $\mathrm{Lip}(w)<6\pi$. 
Hence, a diagonal sequence argument shows $u$ is connected to 1 in $U(A_\omega)$, and 
$K_1(u)=0$ in $K_1(A_\omega)$. 
\end{proof}

So far we have proved the first statement of Theorem \ref{central sequence}. 
To prove the second one, it suffices to show that $\kappa_{X}$ and $\nu_1$ are surjective under 
the assumption that $A$ is in the UCT class $\cN$. 

We denote by $\underline{K}(A)$ the entire $K$-group of $A$, and by $\Hom_\Lambda(\underline{K}(A),\underline{K}(B))$ 
the set of graded homomorphisms from $\underline{K}(A)$ to $\underline{K}(B)$ preserving the action of the category $\Lambda$ 
(see \cite{DL} for the definition). 
 
\begin{lemma} \label{separable} With the notation above, we assume that $A$ is in the UCT class $\cN$. 
Let 
$$h\in \Hom_\Lambda(\underline{K}(A),\underline{K}(C(X)\otimes A^\omega)).$$ 
Then there exist a unital simple separable $C^*$-subalgebra $B$ of $A^\omega$ and a homomorphism 
$$k\in \Hom_\Lambda(\underline{K}(A),\underline{K}(C(X)\otimes B))$$
satisfying $B\cong B\otimes \cO_\infty$ and  
$h=\underline{K}(\iota)\circ k$, where 
$\iota:C(X)\otimes B\rightarrow C(X)\otimes A^\omega$ is the inclusion map. 
\end{lemma}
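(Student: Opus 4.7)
My plan is to build $B$ by a separable exhaustion argument inside $A^\omega$, exploiting the fact that $\underline{K}(A)$ is countable and that every $K$-theoretic relation witnessed in $\underline{K}(C(X)\otimes A^\omega)$ involves only countably much data. Since $A$ is separable, nuclear, and in $\cN$, the total $K$-theory $\underline{K}(A)$ is countable and admits a countable presentation: countably many generators together with countably many relations, including the compatibilities with the Bockstein and coefficient-change natural transformations that encode the $\Lambda$-structure. Fix such a presentation, with generators $\{x_n\}_{n\in\N}$.

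For each $n$, the class $h(x_n)\in\underline{K}(C(X)\otimes A^\omega)$ has a representative — a projection, a unitary, or a Dadarlat-Loring type Cuntz pair in a matrix algebra over $C(X)\otimes A^\omega\cong C(X,A^\omega)$. Since $X$ is compact metrizable, any such representative is determined by countably much $A^\omega$-data, namely the values of its matrix entries on a countable dense subset of $X$. Similarly, each prescribed $\Lambda$-relation among the $h(x_n)$ (equality of $K_0$-classes, triviality of a $K_1$-class, equivariance under a Bockstein, etc.) is witnessed in $C(X)\otimes A^\omega$ by countably many additional elements: partial isometries implementing Murray-von Neumann equivalences, continuous unitary paths sampled on a countable dense subset of $I$, lifting data coming from short exact sequences. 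Collect all these representatives and witnesses, together with $1_{A^\omega}$, into a countable subset $F_0\subset A^\omega$.

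Now construct an increasing chain of separable unital subalgebras $D_0\subset D_1\subset\cdots$ of $A^\omega$ as follows. Let $D_0=C^*(F_0)$. At stage $n$, exploit the fact that $A\cong A\otimes\cO_\infty$ (so that $A^\omega\cap D_n'$ admits a unital embedding of $\cO_\infty$, as in the hypothesis of Lemma \ref{identification}) to adjoin to $D_n$ a commuting unital copy of $\cO_\infty$; simultaneously, for each nonzero element $d$ produced so far, adjoin isometries of $A^\omega$ witnessing that $d$ is full, which exist by pure infiniteness and simplicity of $A^\omega$. Setting $B=\overline{\bigcup_n D_n}$, the standard telescoping argument shows $B\cong B\otimes\cO_\infty$, the fullness witnesses make $B$ simple, and unitality and separability are clear from the construction. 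By construction each $h(x_n)$ is represented by elements of $C(X)\otimes B$ and the $\Lambda$-relations are witnessed already in $C(X)\otimes B$, so we may define $k$ on generators by the chosen representatives, extend by the relations, and verify $\underline{K}(\iota)\circ k=h$ on generators.

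The most delicate point I anticipate is the simplicity of $B$: unlike $\cO_\infty$-absorption, simplicity is a global property of the ideal lattice that is not automatically inherited by separable subalgebras of a simple $C^*$-algebra, so the fullness witnesses at each stage must be carefully interleaved with the $\cO_\infty$-absorption step and the $K$-theoretic witnesses, so that in the inductive limit every nonzero element generates $B$ as a two-sided ideal while the other required properties persist. Everything else is routine bookkeeping once the exhaustion is set up.
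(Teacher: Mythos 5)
Your argument is correct and is essentially the paper's own proof: factor $h$ through a separable unital subalgebra using countability of $\underline{K}(A)$ and of the $K$-theoretic witnessing data, enlarge by interleaving fullness witnesses (available because $A^\omega$ is purely infinite and simple) so that the limit is simple, and use that $\cO_\infty$ embeds unitally into the relative commutant in $A^\omega$ of any separable subalgebra. The only cosmetic difference is that the paper adjoins a single commuting copy of $\cO_\infty$ once at the end, obtaining $B\cong B_\infty\otimes\cO_\infty$ and hence $B\cong B\otimes\cO_\infty$ directly, whereas you interleave $\cO_\infty$ at every stage and then need the standard approximately-central-embedding (telescoping) criterion for $\cO_\infty$-absorption of the limit.
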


\begin{proof} Since $\underline{K}(A)$ is a countable group and $X$ is a compact metrizable space, 
there exists a unital separable $C^*$-subalgebra $B_0$ of $A^\omega$ and a homomorphism 
$$k_0\in \Hom_\Lambda(\underline{K}(A),\underline{K}(C(X)\otimes B_0))$$
satisfying $h=\underline{K}(\iota_0)\circ k$, where 
$\iota_0:C(X)\otimes B_0\rightarrow C(X)\otimes A^\omega$ is the inclusion map. 
To prove the statement, it suffices to show that there exists a separable simple $C^*$-subalgebra $B$ of $A^\omega$ 
containing $B_0$ and satisfying $B\cong B\otimes \cO_\infty$.

Since $A^\omega$ is unital simple purely infinite, we can inductively construct an increasing sequence of separable 
$C^*$-subalgebras $\{B_n\}_{n=0}^\infty$ of $A^\omega$ such that for any non-zero positive element $a\in B_n$, there exists $b\in B_{n+1}$ 
satisfying $b^*ab=1_{A^\omega}$. 
Then the closure $B_\infty$ of $\cup_{n}B_n$ is simple.  
Since $B_\infty$ is separable, we can find a unital embedding $\theta:\cO_\infty\to A^\omega\cap B_\infty'$,  
and the $C^*$-algebra $B$ generated by $B_\infty$ and $\theta(\cO_\infty)$ is isomorphic to $B_\infty\otimes \cO_\infty$, 
and hence $B\cong B\otimes \cO_\infty$. 
\end{proof}

\begin{lemma}\label{H=KL} Let the notation be as above. 
If $A$ is in the UCT class $\cN$, the map $\kappa_X:H(A,C(X)\otimes A^\omega))\to KL(A,C(X)\otimes A^\omega))$ is an isomorphism. 
\end{lemma}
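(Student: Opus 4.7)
The plan is to establish the surjectivity of $\kappa_X$; injectivity has already been obtained. Given $y \in KL(A, C(X) \otimes A^\omega)$, the UCT for $A$ identifies $y$ with a $\Lambda$-homomorphism $h \colon \underline{K}(A) \to \underline{K}(C(X) \otimes A^\omega)$, and Lemma \ref{separable} produces a separable simple unital $\cO_\infty$-absorbing $C^*$-subalgebra $B \subset A^\omega$ together with $k \in \Hom_\Lambda(\underline{K}(A), \underline{K}(C(X) \otimes B))$ satisfying $h = \underline{K}(\iota) \circ k$. Because $A$ is in Cuntz standard form we have $[1_{A^\omega}] = 0$ in $K_0(A^\omega)$, and a standard argument using proper infiniteness then yields a unital embedding of the Cuntz algebra $\cO_2$ into $A^\omega$. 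Incorporating such a unital copy of $\cO_2$ into the seed $B_0$ used in the proof of Lemma \ref{separable} guarantees $\cO_2 \subset B$ unitally, so $[1_B] = 0$ in $K_0(B)$ and, by K\"unneth, $[1_{C(X) \otimes B}] = 0$ in $K_0(C(X) \otimes B)$.

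With this normalization, I would realize $k$ by a unital $*$-homomorphism $\rho \colon A \to C(X) \otimes B$. Since $A$ is a unital Kirchberg algebra in the UCT class $\cN$ and $C(X) \otimes B$ is a separable unital $\cO_\infty$-absorbing $C^*$-algebra, the existence half of the Kirchberg--Phillips classification theorem, in its continuous-field form as developed by Dadarlat and Dadarlat--Pennig, produces such a $\rho$; the unit condition $k([1_A]) = [1_{C(X) \otimes B}]$ reduces to $0 = 0$ and is automatic. The composition $\varphi := \iota \circ \rho \colon A \to C(X) \otimes A^\omega$ is then a unital homomorphism in $\Hom(A, C(X) \otimes A^\omega)$ with $\kappa_X([\varphi]) = \underline{K}(\iota) \circ k = h$, so $\varphi$ realizes $y$.

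The principal obstacle is the invocation of Kirchberg--Phillips existence in the non-simple $C(X)$-algebra target $C(X) \otimes B$, which requires the continuous-field version from the classification literature. The normalization via $\cO_2 \hookrightarrow B$ is a crucial but elementary ingredient; once both are in place, the remainder is a routine $K$-theory chase using the factorization of Lemma \ref{separable} and the already-established injectivity of $\kappa_X$.
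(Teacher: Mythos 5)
Your proposal follows essentially the same route as the paper: identify the $KL$-class with a $\Lambda$-homomorphism via the UMCT, factor it through a separable simple $\cO_\infty$-absorbing subalgebra $B\subset A^\omega$ using Lemma \ref{separable}, realize the factored data by a homomorphism into $C(X)\otimes B$ via the Kirchberg--Phillips existence theorem, and compose with the inclusion. The only differences are minor: the paper makes explicit the intermediate UMCT lift of $k$ to an actual class $\tilde{k}\in KK(A,C(X)\otimes B)$ before invoking classification (a step your ``realize $k$ by a homomorphism'' silently absorbs), while your extra $\cO_2$-normalization of $B$ is a careful and harmless way of securing unitality of $\rho$, a point the paper leaves implicit.
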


\begin{proof} It suffices to show that $\kappa_X$ is surjective. 
Recall that Dadarlat-Loring's UMCT \cite{DL} implies   
$$KL(A,C(X)\otimes A^\omega)\cong \Hom_\Lambda(\underline{K}(A),\underline{K}(C(X)\otimes A^\omega)).$$

Let $x\in KL(A,C(X)\otimes A^\omega))$ be given, and let 
$h\in  \Hom_\Lambda(\underline{K}(A),\underline{K}(C(X)\otimes A^\omega))$ be the corresponding homomorphism. 
Then thanks to Lemma \ref{separable}, there exist a unital simple separable $C^*$-subalgebra $B$ of 
$A^\omega$ and  $k\in \Hom_\Lambda(\underline{K}(A),\underline{K}(C(X)\otimes B))$ satisfying $B\cong B\otimes \cO_\infty$, and 
$h=\underline{K}(\iota)\circ k$, where 
$\iota:C(X)\otimes B\rightarrow C(X)\otimes A^\omega$ is the inclusion map. 
Thanks to the UMCT exact sequence, we can choose $\tilde{k}\in KK(A,C(X)\otimes B)$ giving rise to $k$. 
Thus the Kirchberg-Phillips classification theorem implies that there exists $\rho\in \Hom(A,C(X)\otimes B)$ 
satisfying $\tilde{k}=KK(\rho)$, and so 
$\underline{K}(\iota\circ \rho)=h$. 
This implies $KL(\iota\circ \rho)=x$, and $\kappa_X$ is surjective. 
\end{proof}

\begin{lemma} \label{criterion} Assume that $A$ is in the UCT class $\cN$. 
Let $\rho,\sigma \in \Hom(A,C(S^1,A^\omega))_*$, and let 
$\gamma:C(S^1)\otimes A^\omega\rightarrow C(S^1,A)^\omega$ be the inclusion map. 
If $\gamma\circ \rho$ and $\gamma\circ \sigma$ are unitarily equivalent, 
then $[\rho]=[\sigma]$ in $H(A,C(S^1)\otimes A^\omega)_*$.
\end{lemma}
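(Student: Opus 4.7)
The strategy is to reduce the desired identity in $H(A,C(S^1)\otimes A^\omega)_*$ to an equality of $KL$-classes, exploiting that $A\in\cN$ makes Lemma~\ref{H=KL} applicable. The hypothesis $\gamma\circ\rho=\Ad(u)\circ\gamma\circ\sigma$ immediately gives $KL(\gamma\circ\rho)=KL(\gamma\circ\sigma)$ in $KL(A,C(S^1,A)^\omega)$, which by naturality reads $\gamma_*(KL(\rho))=\gamma_*(KL(\sigma))$ for
\[
\gamma_*:KL(A,C(S^1)\otimes A^\omega)\to KL(A,C(S^1,A)^\omega).
\]
Once the injectivity of $\gamma_*$ is established, one gets $KL(\rho)=KL(\sigma)$, hence $[\rho]=[\sigma]$ in $H(A,C(S^1)\otimes A^\omega)$ via Lemma~\ref{H=KL}; Remark~\ref{aue} then promotes this to the starred version because $\rho,\sigma$ already lie in $\Hom(A,C(S^1,A^\omega))_*$.

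To prove that $\gamma_*$ is injective, I will invoke Dadarlat-Loring's UMCT to obtain natural isomorphisms $KL(A,B)\cong\Hom_\Lambda(\underline{K}(A),\underline{K}(B))$, thereby reducing the task to injectivity of $\underline{K}(\gamma):\underline{K}(C(S^1)\otimes A^\omega)\to\underline{K}(C(S^1,A)^\omega)$. Evaluation at $x_0$ provides split short exact sequences
\[
0\to SA^\omega\to C(S^1)\otimes A^\omega\to A^\omega\to 0,\qquad 0\to (SA)^\omega\to C(S^1,A)^\omega\to A^\omega\to 0,
\]
which $\gamma$ intertwines by the identity on the common quotient $A^\omega$ and by $\gamma_S:SA^\omega\to(SA)^\omega$ on the ideal. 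Since $\underline{K}$ splits compatibly along both extensions, a Five-Lemma argument reduces the problem to showing that $\underline{K}(\gamma_S)$ is an isomorphism.

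For the integer part, Bott periodicity together with the Remark after Theorem~\ref{central sequence} (using that $A$ is Kirchberg) gives $K_*(SA^\omega)\cong K_{*+1}(A^\omega)\cong K_{*+1}(A)^\omega$. The counterpart identification $K_*((SA)^\omega)\cong K_*(SA)^\omega\cong K_{*+1}(A)^\omega$ should follow from $\cO_\infty$-absorption of $SA$ (inherited from $A\cong A\otimes\cO_\infty$), and under these identifications $K_*(\gamma_S)$ becomes the identity. The $\Z/n$-coefficient components of $\underline{K}$ are handled via the Bockstein exact sequence in the same way.

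The principal obstacle is the identification $K_*((SA)^\omega)\cong K_*(SA)^\omega$: the Remark in the paper establishes this only for purely infinite \emph{simple} algebras, whereas $SA$ has many ideals. The workaround should exploit $\cO_\infty$-absorption of $SA$ to lift projections and unitaries from matrices over $(SA)^\omega$ to matrices over $SA$ uniformly in the ultrafilter variable, via Kirchberg-type functional-calculus perturbations. A pleasant byproduct, if the argument goes through, is that $\gamma_*$ is in fact an \emph{isomorphism} on $KL(A,-)$, not merely an injection.
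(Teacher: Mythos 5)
Your proposal is correct in outline and follows essentially the same route as the paper: reduce $[\rho]=[\sigma]$ to an equality of total $K$-theory classes via Lin's theorem and the UMCT, and then show that the inclusion $\gamma$ induces an injection on $\underline{K}$. The differences are mainly organizational. First, the paper never passes through $KL(A,C(S^1,A)^\omega)$: since unitarily equivalent homomorphisms induce the same map on $\underline{K}$, the hypothesis directly gives $\underline{K}(\gamma)\circ\underline{K}(\rho)=\underline{K}(\gamma)\circ\underline{K}(\sigma)$, and the UMCT/Lin input is only needed for the target $C(S^1)\otimes A^\omega$ (where the remark following Theorem \ref{central sequence} kills $\Pext$); this sidesteps the delicate question of invoking the UMCT for the huge non-separable algebra $C(S^1,A)^\omega$, which your $KL$-level naturality argument would require. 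Second, instead of your split-exact-sequence/five-lemma reduction to $\gamma_S:S(A^\omega)\to(SA)^\omega$, the paper computes both sides directly, $K_*(C(S^1)\otimes A^\omega)\cong K_0(A)^\omega\oplus K_1(A)^\omega$ and $K_*(C(S^1,A)^\omega)=(K_0(A)\oplus K_1(A))^\omega$, and then upgrades from $K_*$ to $\underline{K}$ by the K\"unneth theorem rather than by Bockstein sequences (equivalent bookkeeping). The step you flag as the principal obstacle, namely $K_*((SA)^\omega)\cong K_*(SA)^\omega$, or equivalently $K_*(C(S^1,A)^\omega)\cong K_*(C(S^1,A))^\omega$, is exactly the point the paper dispatches as routine: because $C(S^1)\otimes A$ is $\cO_\infty$-absorbing, every $K_0$-class is represented by a properly infinite full projection in the algebra itself, $K_1=U/U_0$, and the relevant equivalences and homotopies can be implemented with uniform control, so the ultrapower computation goes through just as in the purely infinite simple case; your proposed lifting-with-uniform-control workaround is the right idea rather than a defect of the approach. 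Finally, as you note, injectivity of $\underline{K}(\gamma)$ suffices, although the computation in fact yields an isomorphism.
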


\begin{proof} It suffices to show that $\underline{K}(\gamma)$ is an isomorphism because 
$[\rho]=[\sigma]$ is equivalent to $\underline{K}(\rho)=\underline{K}(\sigma)$. 
First we claim $K_*(\gamma)$ is an isomorphism for $*=0,1$. 
Indeed, this follows from 
$$K_*(C(S^1)\otimes A^\omega)\cong K_0(A^\omega)\oplus K_1(A^\omega)\cong K_0(A)^\omega\oplus K_1(A)^\omega,$$
$$K_*(C(S^1,A)^\omega)=(K_0(A)\oplus K_1(A))^\omega\cong K_0(A)^\omega\oplus K_1(A)^\omega.$$
Now, the statement follows from the K{\"u}nneth theorem. 
\end{proof}

\begin{lemma} Under the assumption that $A$ is in the UCT class $\cN$, the map 
$\nu_1:K_1(A_\omega)\to H(A,C(S^1)\otimes A^\omega)_*$ is an isomorphism. 
\end{lemma}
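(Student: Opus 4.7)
The preceding lemma establishes that $\nu_1$ is an injective group homomorphism, so only surjectivity remains. By Lemma~\ref{H=KL} combined with Lemma~\ref{sum} applied to $X=\{pt\}$, the map $S\kappa_{\{pt\}}\colon H(A,C(S^1)\otimes A^\omega)_*\to KL(A,SA^\omega)$ is already a group isomorphism; hence it suffices to show that every class $[\rho]\in H(A,C(S^1)\otimes A^\omega)_*$ is of the form $[\rho^u]$ for some $u\in U(A_\omega)$.

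Fix such a representative $\rho$ with $\rho(a)(x_0)=a$, and identify $S^1$ with $I/\partial I$ so that $\rho$ becomes a continuous family $\{\rho_t\}_{t\in I}$ of unital homomorphisms $A\to A^\omega$ with $\rho_0=\rho_1=\iota_A$. Since the family factors through $A\to C(I)\otimes A^\omega$ and $C(I)$ is contractible, the class $KL(\rho_t)\in KL(A,A^\omega)$ is independent of $t$ and equals $KL(\iota_A)$. By the injectivity of $\kappa$, each $\rho_t$ is approximately unitarily equivalent to $\iota_A$ in $\Hom(A,A^\omega)$.

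The goal is now to upgrade this pointwise-in-$t$ equivalence to a single continuous path of unitaries implementing $\rho$. I would represent $\rho$ by a sequence of unital homomorphisms $\rho^{(n)}\colon A\to C(I,A)$, using the weak semiprojectivity of the Kirchberg algebra $A$ to lift from the ultrapower. For sufficiently large $n$ both endpoints $\rho^{(n)}_0,\rho^{(n)}_1$ approximate $\iota_A$; applying a parametric version of the Kirchberg--Phillips uniqueness theorem on the compact base $I$ together with Lemma~\ref{dilation}, one constructs continuous paths $u^{(n)}\colon I\to U(A)$ with $u^{(n)}(0)=1$ such that $u^{(n)}(t)\,a\,u^{(n)}(t)^*\to\rho_t(a)$ uniformly in $t$ as $n\to\omega$, for every $a$ in a fixed dense sequence of $A$. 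Setting $u(t):=[(u^{(n)}(t))_n]\in U(A^\omega)$ then yields a continuous path from $1$ to $u:=u(1)$, and the identity $\Ad u(t)\circ\iota_A=\rho_t$ forces $u\in U(A_\omega)$ together with $[\rho^u]=[\rho]$.

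The main obstacle is this parametric unitary lifting: a pointwise approximate unitary equivalence $\rho^{(n)}_t\sim\iota_A$ must be assembled into a continuous path of unitaries in $U(A)$ with the prescribed basepoint $u^{(n)}(0)=1$. I expect to handle it by covering $I$ by finitely many small intervals on which the local Kirchberg--Phillips approximation can be solved, patching the resulting local unitaries along overlaps, and absorbing the patching errors in the ultrapower limit $n\to\omega$.
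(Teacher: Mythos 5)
Your overall strategy (implement $\rho$ by a path of unitaries, evaluate at the endpoint to obtain $u\in U(A_\omega)$, and identify $[\rho]$ with $[\rho^u]$) is in the same spirit as the paper, but two of your key steps have genuine gaps. First, you lift $\rho$ to a sequence of unital homomorphisms $\rho^{(n)}:A\to C(I,A)$ by invoking ``weak semiprojectivity of the Kirchberg algebra $A$''; general unital Kirchberg algebras, even in the UCT class, are \emph{not} weakly semiprojective (this holds only under finiteness assumptions on the $K$-groups), so this lifting is unavailable and the whole construction of the $\rho^{(n)}$ collapses. Second, and more seriously, the heart of the matter is exactly the step you defer: upgrading the pointwise-in-$t$ approximate unitary equivalences $\rho_t\sim\iota_A$ to a \emph{parametrized} one over $I$. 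Your plan of covering $I$ by small intervals and patching local unitaries requires connecting the transition unitaries (which are only almost central) to $1$ through almost central paths; this carries $K$-theoretic obstructions and controlled-Lipschitz issues of Nakamura/basic-homotopy-lemma type and cannot simply be ``absorbed in the ultrapower limit''. The paper sidesteps this entirely: it regards $\sigma$ as an element of $\Hom(A,C([0,1],A^\omega))_*$, uses contractibility of $[0,1]$ to get $KL(\sigma)=KL(\iota_{A,[0,1]})$, and then applies \cite[Theorem 3.14]{L} (the same result underlying the injectivity of $\kappa_X$) with the parameter space built into the codomain, obtaining in one stroke an approximate unitary equivalence by unitaries in $C([0,1])\otimes A^\omega$; a diagonal sequence argument then yields an exact unitary $w\in U(C([0,1],A)^\omega)$ with $\sigma(x)=wxw^*$, and $u=(w_n(1))_n\in U(A_\omega)$.

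There are also two smaller problems. Even granting your construction, the ultralimit $u(t):=[(u^{(n)}(t))_n]$ need not be a \emph{continuous} path in $U(A^\omega)$ without an equicontinuity (Lipschitz) control on the $u^{(n)}$, which you do not provide; what one naturally gets is a unitary in $C(I,A)^\omega$, not in $C(I)\otimes A^\omega$, and these algebras differ. Consequently one only obtains unitary equivalence of $\gamma\circ\sigma$ and $\gamma\circ\rho^u$ after the embedding $\gamma:C(S^1)\otimes A^\omega\to C(S^1,A)^\omega$, and to descend this to $[\sigma]=[\rho^u]$ in $H(A,C(S^1)\otimes A^\omega)_*$ one needs Lemma \ref{criterion}, whose proof is where the UCT (via the K\"unneth theorem) actually enters. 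Your argument never uses the UCT hypothesis, which is a symptom of this missing step; the opening reduction via Lemma \ref{H=KL} and Lemma \ref{sum} is harmless but does no work, since it merely restates the surjectivity you must prove.
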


\begin{proof} It suffices to show that $\nu_1$ is surjective. 
Let $\sigma\in \Hom(A,C(S^1)\otimes A^\omega)_*$. 
We first claim that there exists a unitary $w\in U(C([0,1],A)^\omega)$ satisfying 
$\sigma(x)=wxw^*$ for all $x\in A$, where 
we emphasize that the equality is understood in $C([0,1],A)^\omega$. 
Indeed, we consider $[0,1]$ as a based space with a base point 0, and treat $\sigma$ as an element 
of $\Hom(A,C([0,1],A^\omega))_*$. 
Then $\sigma$ is homotopic to $\iota_{A,[0,1]}$, and we have $KL(\sigma)=KL(\iota_{A,[0,1]})$ 
in $KL(A,C([0,1],A^\omega))$. 
Thus \cite[Theorem 3.14]{L} implies that $\sigma$ and $\iota_{A,X}$ are approximately unitarily equivalent. 
By the usual diagonal sequence argument, we get $w$.

Let $(w_n)$ be a representing sequence of $w$. 
By replacing $w_n(t)$ with $w_n(t)w_n(0)^{-1}$ if necessary, we may and do assume $w_n(0)=1$. 
Let $u_n=w_n(1)$. 
Then $u=(u_n)$ belongs to $U(A_\omega)$.  
We take a continuous path $\{u(t)\}_{t\in [0,1]}$ of unitaries in $A^\omega$ such that $u(0)=1$ and 
$u(1)=u$, and let $\rho(x)(t)=u(t)xu(t)^*$ for $x\in A$. 
We show $[\sigma]=[\rho]$ in $H(A,C(S^1)\otimes A^\omega)_*$. 
Indeed, let $\gamma:C(S^1)\otimes A^\omega\rightarrow C(S^1,A)^\omega$ be the inclusion map. 
Then by construction $\gamma\circ \sigma$ and $\gamma\circ \rho$ are unitarily equivalent. 
Therefore thanks to Lemma \ref{criterion}, we conclude $[\sigma]=[\rho]$ in $H(A,C(S^1)\otimes A^\omega)_*$. 
This shows that $\mu$ is a surjection. 
\end{proof}


\section{Appendix}
In this appendix, we establish a refinement of Nakamura's homotopy theorem (Theorem \ref{refinedNakamura}), 
which is used in  Lemma \ref{K_12}. 
We formulate the statement involving group actions as we need it in our companion papers \cite{IM1}, \cite{IM2}.

We first recall Haagerup-R\o rdam's construction 
\cite[Lemma 5.1]{HR95} of a continuous path of unitaries. 
Consider the Cuntz algebra $\cO_2=C^*(S_1,S_2)$ and let 
\[
R_1=S_1,\quad R_2=S_2S_1,\quad R_3=S_2S_2, 
\]
\[
T_1=S_2,\quad T_2=S_1S_1,\quad T_3=S_1S_2. 
\]
Then $\{R_1,R_2,R_3\}$ and $\{T_1,T_2,T_3\}$ satisfy 
the $\mathcal{O}_3$-relation. 
Let $\rho$ and $\tau$ be 
the unital homomorphisms from $M_3(\C)$ to $\mathcal{O}_2$ given by 
\[
\rho(e_{ij})=R_iR_j^*,\quad \tau(e_{ij})=T_iT_j^*, 
\]
where $(e_{ij})_{ij}$ is the canonical system of matrix units in $M_3(\C)$. 
We have 
\[
\rho(e_{11})=\tau(e_{22}+e_{33}),\quad \tau(e_{11})=\rho(e_{22}+e_{33}). 
\]
Let 
\[
w=\begin{bmatrix}0&0&1\\1&0&0\\0&1&0\end{bmatrix}\in M_3(\C)
\]
and let $h\in M_3(\C)$ be the self-adjoint matrix 
such that $e^{2\pi\sqrt{-1}h}=w$ and $\lVert h\rVert=1/3$. 

Let $A$ be a (unital or non-unital) $C^*$-algebra and let $u\in U(A)$. 
We construct a continuous map $\hbar(u):[0,1]\to U(A\otimes\mathcal{O}_2)$ 
from $1$ to $u\otimes1$ as follows. 
Set 
\[
\tilde u=\begin{bmatrix}u^*&0&0\\0&1&0\\0&0&u\end{bmatrix}
\in U(A\otimes M_3(\C))
\]
and set 
\[
\tilde u(t)=(1\otimes e^{2\pi\sqrt{-1}th})\tilde u
(1\otimes e^{-2\pi\sqrt{-1}th})\tilde u^*. 
\]
We define 
\[
\hbar(u)(t)
=(\id\otimes\rho)(\tilde u(t))(\id\otimes\tau)(\tilde u(t))
\in U(A\otimes\mathcal{O}_2). 
\]

\begin{lemma}[{\cite[Lemma 5.1]{HR95}}]\label{HaagerupRordam}
Let $A$ be a $C^*$-algebra and let $u\in U(A)$. 
The continuous path $\hbar(u):[0,1]\to U(A\otimes\mathcal{O}_2)$ 
constructed above satisfies the following. 
\begin{enumerate}
\item [$(1)$] $\hbar(u)(0)=1$, $\hbar(u)(1)=u\otimes1$ and 
$\Lip(\hbar(u))\leq8\pi/3$. 
\item [$(2)$] $\lVert[\hbar(u)(t),a\otimes1]\rVert\leq4\lVert[u,a]\rVert$ holds 
for any $t\in[0,1]$ and $a\in A$. 
\item [$(3)$] Let $v\in U(A)$ be another unitary. 
Then $\lVert\hbar(u)(t)-\hbar(v)(t)\rVert\leq4\lVert u-v\rVert$. 
\item [$(4)$] If $u$ is in $U(B)$ for some subalgebra $B\subset A$, 
then $\hbar(u)(t)$ is in $U(B\otimes\mathcal{O}_2)$. 
\end{enumerate}
\end{lemma}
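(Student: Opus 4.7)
My plan is to verify the four items as direct calculations, exploiting heavily the special structure of the path
$$\tilde u(t)=(1\otimes e^{2\pi\sqrt{-1}th})\,\tilde u\,(1\otimes e^{-2\pi\sqrt{-1}th})\,\tilde u^*,$$
namely that it is a commutator of a scalar unitary with $\tilde u$, together with the specific algebraic relations among $\rho,\tau$.

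For (1), the endpoint at $t=0$ is immediate: $e^{0}=1$, so $\tilde u(0)=\tilde u\tilde u^*=1$, hence $\hbar(u)(0)=1$. The endpoint at $t=1$ is the main algebraic step. Since $e^{2\pi\sqrt{-1}h}=w$ is the cyclic permutation, conjugation by $1\otimes w$ cycles the diagonal entries of $\tilde u=\mathrm{diag}(u^*,1,u)$, and a direct $3\times3$ computation yields $\tilde u(1)=\mathrm{diag}(u^2,u^*,u^*)$. Applying $\id\otimes\rho$ and $\id\otimes\tau$, using $R_1R_1^*=S_1S_1^*$, $R_2R_2^*+R_3R_3^*=S_2S_2^*$, and symmetrically $T_1T_1^*=S_2S_2^*$, $T_2T_2^*+T_3T_3^*=S_1S_1^*$, the four terms in the product $(\id\otimes\rho)(\tilde u(1))\,(\id\otimes\tau)(\tilde u(1))$ collapse (the cross terms vanish by $S_1^*S_2=0$) to $u\otimes(S_1S_1^*+S_2S_2^*)=u\otimes 1$. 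The Lipschitz estimate uses $\|h\|=1/3$: differentiating gives $\|\tfrac{d}{dt}(1\otimes e^{\pm2\pi\sqrt{-1}th})\|\leq 2\pi/3$, so the product rule applied twice in $\tilde u(t)$ gives $\Lip(\tilde u)\leq 4\pi/3$; each of $\id\otimes\rho$ and $\id\otimes\tau$ is contractive, and one further product doubles the bound to $8\pi/3$.

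For (2) and (3), the common observation is that the scalar factor $w_t:=e^{2\pi\sqrt{-1}th}$ sits in $1\otimes M_3$, hence commutes with $a\otimes 1$ and has norm one. Therefore
$$[\tilde u(t),a\otimes 1]=(1\otimes w_t)[\tilde u,a\otimes 1](1\otimes w_t^*)\tilde u^*+(1\otimes w_t)\tilde u(1\otimes w_t^*)[\tilde u^*,a\otimes 1],$$
and each commutator on the right is block-diagonal with entries among $0,\pm[u,a],\pm[u^*,a]$, hence has norm at most $\|[u,a]\|$; this gives $\|[\tilde u(t),a\otimes 1]\|\leq 2\|[u,a]\|$. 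Contractivity of $\id\otimes\rho$ and $\id\otimes\tau$ passes this bound through, and one Leibniz expansion
$$[\hbar(u)(t),a\otimes 1]=[(\id\otimes\rho)(\tilde u(t)),a\otimes 1]\,(\id\otimes\tau)(\tilde u(t))+(\id\otimes\rho)(\tilde u(t))\,[(\id\otimes\tau)(\tilde u(t)),a\otimes 1]$$
doubles it once more to $4\|[u,a]\|$, yielding (2). For (3), an entirely parallel computation with $[\,\cdot\,,a\otimes 1]$ replaced by the difference $\tilde u(t)-\tilde v(t)$ gives $\|\tilde u(t)-\tilde v(t)\|\leq 2\|u-v\|$, and the identity $AB-A'B'=(A-A')B+A'(B-B')$ applied to the product doubles it to $4\|u-v\|$.

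Part (4) is then automatic: $u\in U(B)$ implies $\tilde u\in U(B\otimes M_3)$, conjugation by the scalar unitary $1\otimes w_t$ preserves $B\otimes M_3$, and the homomorphisms $\id\otimes\rho$, $\id\otimes\tau$ map $B\otimes M_3$ into $B\otimes\cO_2$. The only real obstacle in this plan is the telescoping identity needed to verify $\hbar(u)(1)=u\otimes 1$; once that particular combinatorial identity between the $R_i$ and the $T_j$ is pinned down, every remaining estimate is a routine application of the Leibniz rule, the triangle inequality, and contractivity of $\ast$-homomorphisms.
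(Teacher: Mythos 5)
Your proposal is correct and is essentially the paper's route: the paper simply cites Haagerup--R{\o}rdam \cite[Lemma 5.1]{HR95} for (1) and (2) and declares (3) and (4) immediate from the construction, and what you have done is carry out that verification explicitly. Your computations all check out -- with the given $w$ one indeed gets $\tilde u(1)=\mathrm{diag}(u^2,u^*,u^*)$, the $\rho$/$\tau$ images collapse to $u^2\otimes S_1S_1^*+u^*\otimes S_2S_2^*$ and $u^2\otimes S_2S_2^*+u^*\otimes S_1S_1^*$ whose product is $u\otimes1$, and the Leibniz/contractivity estimates reproduce exactly the constants $8\pi/3$, $4\lVert[u,a]\rVert$ and $4\lVert u-v\rVert$, while (4) follows since conjugation by $1\otimes e^{2\pi\sqrt{-1}th}$ and the unital maps $\id\otimes\rho$, $\id\otimes\tau$ respect $B\otimes M_3(\C)$ and $B\otimes\mathcal{O}_2$ (also in the non-unital convention $u-1\in B$).
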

\begin{proof}
(1) and (2) are consequences of \cite[Lemma 5.1]{HR95}. 
(3) and (4) are immediate from the construction. 
\end{proof}

\begin{lemma}\label{hmtpyofhomo}
Let $A$ be a unital $C^*$-algebra such that $U(A)=U(A)_0$. 
Let $\sigma_1$ and $\sigma_2$ be 
unital homomorphisms from $\mathcal{O}_2$ to $A$. 
Then there exists a homotopy of unital homomorphisms $(\rho_t)_{t\in[0,1]}$ 
between $\sigma_1$ and $\sigma_2$. 
\end{lemma}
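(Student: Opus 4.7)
The proof will rest on the observation that any two unital $\ast$-homomorphisms from $\mathcal{O}_2$ into a unital $C^*$-algebra differ by an inner automorphism in a completely explicit way. Writing $\sigma_i(S_j)=V_j^{(i)}$ for $i,j=1,2$, I will set
\[
W=V_1^{(1)}V_1^{(2)*}+V_2^{(1)}V_2^{(2)*}\in A.
\]
A direct computation using the $\mathcal{O}_2$-relations $V_j^{(i)*}V_k^{(i)}=\delta_{jk}$ and $\sum_j V_j^{(i)}V_j^{(i)*}=1$ shows $WW^*=W^*W=1$, so $W\in U(A)$. Moreover $WV_k^{(2)}=V_k^{(1)}$ for $k=1,2$, hence $W\sigma_2(S_k)W^*=V_k^{(1)}W^*$ is not immediately $V_k^{(1)}$, so I must instead verify the cleaner identity $W\sigma_2(S_kS_l^*)W^*=\sigma_1(S_kS_l^*)$. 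Since the $\ast$-subalgebra generated by the monomials $S_kS_l^*$ is dense in $\mathcal{O}_2$, this yields $\sigma_1=\Ad W\circ\sigma_2$.

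Once this inner-conjugacy is established, the hypothesis $U(A)=U(A)_0$ gives a norm-continuous path $\{W(t)\}_{t\in[0,1]}$ in $U(A)$ with $W(0)=1$ and $W(1)=W$. Setting $\rho_t=\Ad W(t)\circ\sigma_2$ produces a family of unital $\ast$-homomorphisms from $\mathcal{O}_2$ to $A$ with $\rho_0=\sigma_2$ and $\rho_1=\sigma_1$, and for each $a\in\mathcal{O}_2$ the map $t\mapsto\rho_t(a)=W(t)\sigma_2(a)W(t)^*$ is norm continuous, which is the point-norm continuity required for a homotopy of homomorphisms. This gives the desired homotopy.

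The main substantive point is therefore the algebraic identity $\sigma_1=\Ad W\circ\sigma_2$ with $W$ as defined above; everything else is a soft consequence of the hypothesis on $U(A)$. I do not anticipate any genuine obstacle, since $\mathcal{O}_2$ is simple and the generators satisfy relations whose preservation under $W(\cdot)W^*$ is immediate. In particular the Haagerup--R\o rdam path $\hbar(\cdot)$ introduced just before the lemma is not needed here; it will presumably play its role in the next lemma where no connectedness assumption on $U(A)$ is available and one needs to work inside $A\otimes\mathcal{O}_2$ instead.
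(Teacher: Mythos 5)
Your proposal has a genuine gap: the claimed identity $\sigma_1=\Ad W\circ\sigma_2$ is false in general, and the argument you give for it does not work. With $W=\sum_j V_j^{(1)}V_j^{(2)*}$ one has $W\sigma_2(S_k)W^*=V_k^{(1)}W^*$, and the ``cleaner'' identity $W\sigma_2(S_kS_l^*)W^*=\sigma_1(S_kS_l^*)$ only holds for the length-one monomials: the $*$-subalgebra they generate is the $4$-dimensional copy of $M_2(\C)$ spanned by $\{S_kS_l^*\}$, which is nowhere near dense in $\cO_2$, and the identity does not propagate to longer words. For instance
$W\sigma_2(S_kS_mS_l^*)W^*=V_k^{(1)}V_m^{(2)}V_l^{(1)*}$, which differs from $\sigma_1(S_kS_mS_l^*)=V_k^{(1)}V_m^{(1)}V_l^{(1)*}$ unless $\sigma_1$ and $\sigma_2$ already agree on that copy of $M_2(\C)$. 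In fact no unitary conjugacy between $\sigma_1$ and $\sigma_2$ can exist in general: take $A=\cO_2$, $\sigma_1=\id$ and $\sigma_2(x)=S_1xS_1^*+S_2xS_2^*$; here $U(A)=U(A)_0$, but $\sigma_2$ is not surjective (e.g.\ $S_1S_2^*$ is not in its range), so it cannot be of the form $\Ad W\circ\sigma_1$, even though the lemma correctly asserts that $\sigma_1$ and $\sigma_2$ are homotopic. So two unital homomorphisms from $\cO_2$ are in general only related by the weaker statement $\sigma_2(S_k)=u\,\sigma_1(S_k)$ with $u=\sigma_2(S_1)\sigma_1(S_1)^*+\sigma_2(S_2)\sigma_1(S_2)^*$, i.e.\ they differ by a ``gauge'' perturbation of the generators, not by an inner automorphism.

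The repair is exactly where your own computation was pointing, and it is what the paper does: use the connectedness of $U(A)$ to choose a continuous path $u(t)\in U(A)$ with $u(0)=1$, $u(1)=u$, and define $\rho_t$ on the generators by $\rho_t(S_k)=u(t)\sigma_1(S_k)$ for $k=1,2$. Left multiplication of the two isometries by a single unitary preserves the Cuntz relations, so each $\rho_t$ is a unital homomorphism $\cO_2\to A$, the map $t\mapsto\rho_t(a)$ is norm continuous for every $a$ (it suffices to check this on the generators), and $\rho_0=\sigma_1$, $\rho_1=\sigma_2$ by the defining property of $u$. Your second paragraph (conjugating $\sigma_2$ along a path from $1$ to $W$) would only give a homotopy from $\sigma_2$ to $\Ad W\circ\sigma_2$, which, as above, need not be $\sigma_1$.
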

\begin{proof}
We can define a unitary $u\in U(A)$ by 
\[
u=\sigma_2(S_1)\sigma_1(S_1)^*+\sigma_2(S_2)\sigma_1(S_2)^*. 
\]
By $U(A)=U(A)_0$, there exists a continuous path $u:[0,1]\to U(A)$ 
such that $u(0)=1$ and $u(1)=u$. 
Now $\rho_t:\mathcal{O}_2\to A$ defined by 
\[
\rho_t(S_1)=u(t)\sigma_1(S_1),\quad \rho_t(S_2)=u(t)\sigma_1(S_2)
\]
gives a desired homotopy. 
\end{proof}

\begin{lemma}\label{explog}
Let $A$ be a unital $C^*$-algebra. 
If a unitary $u\in A$ satisfies $\lVert u-1\rVert<1$, then 
\[
\lVert[\exp(s\log u),a]\rVert
\leq\frac{1}{1-\lVert u{-}1\rVert}\lVert[u,a]\rVert
\]
holds for any $a\in A$ and $s\in[0,1]$. 
\end{lemma}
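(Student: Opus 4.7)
The plan is to expand $\exp(s\log u)$ as a power series in $u-1$ and bound each term. Since $\|u-1\|<1$, the principal branch logarithm $\log u = \sum_{n\geq 1}\tfrac{(-1)^{n+1}}{n}(u-1)^n$ converges in norm, and $\exp(s\log u)$ coincides with the holomorphic functional calculus $u^s$. Since the function $z\mapsto z^s$ is holomorphic on $\{|z-1|<1\}$, we have the convergent binomial expansion
\[
\exp(s\log u)=\sum_{n=0}^{\infty}\binom{s}{n}(u-1)^n,
\]
where $\binom{s}{0}=1$ and $\binom{s}{n}=\tfrac{s(s-1)\cdots(s-n+1)}{n!}$ for $n\geq 1$.

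Next I would estimate the commutator of each term with $a$. Setting $v=u-1$, one has $[v,a]=[u,a]$, and the standard telescoping identity
\[
[v^n,a]=\sum_{k=0}^{n-1}v^k[v,a]v^{n-1-k}
\]
yields $\|[(u-1)^n,a]\|\leq n\,\|u-1\|^{n-1}\,\|[u,a]\|$ for every $n\geq 1$. Term-by-term, this gives
\[
\|[\exp(s\log u),a]\|\leq\Bigl(\sum_{n=1}^{\infty}\Bigl|\binom{s}{n}\Bigr|\,n\,\|u-1\|^{n-1}\Bigr)\,\|[u,a]\|.
\]

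The key numerical estimate is that for $s\in[0,1]$ and $n\geq 1$ one has $|\binom{s}{n}|\,n\leq 1$. Indeed,
\[
\Bigl|\binom{s}{n}\Bigr|\,n=\frac{s(1-s)(2-s)\cdots(n-1-s)}{(n-1)!}
\]
and each factor $k-s$ with $k\geq 1$ satisfies $0\leq k-s\leq k$, while $s\leq 1$, so the numerator is at most $(n-1)!$. Summing the resulting geometric series,
\[
\sum_{n=1}^{\infty}\Bigl|\binom{s}{n}\Bigr|\,n\,\|u-1\|^{n-1}\leq\sum_{n=1}^{\infty}\|u-1\|^{n-1}=\frac{1}{1-\|u-1\|},
\]
which yields the desired bound.

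The only nontrivial step is the bound $|\binom{s}{n}|\,n\leq 1$ for $s\in[0,1]$; everything else is the standard commutator telescoping together with the power series representation of $u^s$ that is available thanks to the hypothesis $\|u-1\|<1$.
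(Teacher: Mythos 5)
Your proof is correct and essentially identical to the paper's: the paper expands $f(t)=\exp(s\log(1+t))=(1+t)^s$ in its Taylor series (computing $f^{(n)}(0)=s(s-1)\cdots(s-n+1)$ via the ODE $(1+t)f'=sf$, which are exactly your binomial coefficients), uses the same telescoping bound $\lVert[(u-1)^n,a]\rVert\leq n\lVert u-1\rVert^{n-1}\lVert[u,a]\rVert$, the same estimate $n\lvert\binom{s}{n}\rvert\leq1$, and the same geometric series. No substantive difference.
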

\begin{proof}
Let $f(t)=\exp(s\log(1{+}t))$. 
We have $(1{+}t)f'(t)=sf(t)$, 
and so $(1{+}t)f^{(n+1)}(t)+nf^{(n)}(t)=sf^{(n)}(t)$. 
Therefore, we get $f(0)=1$, $f'(0)=s$ and 
\[
f^{(n)}(0)=s(s-1)(s-2)\dots(s-(n{-}1)). 
\]
Because $\lvert f^{(n)}(0)\rvert\leq(n{-}1)!$ for $n\geq2$, one obtains 
\begin{align*}
\lVert[\exp(s\log u),a]\rVert
&=\lVert[f(u{-}1),a]\rVert\\
&\leq\sum_{n=0}^\infty\frac{\lvert f^{(n)}(0)\rvert}{n!}
\lVert[(u{-}1)^n,a]\rVert\\
&\leq\sum_{n=0}^\infty\frac{\lvert f^{(n)}(0)\rvert}{n!}
\cdot n\lVert u-1\rVert^{n-1}\lVert[u,a]\rVert\\
&\leq\sum_{n=1}^\infty\lVert u-1\rVert^{n-1}\lVert[u,a]\rVert
=\frac{1}{1-\lVert u{-}1\rVert}\lVert[u,a]\rVert. 
\end{align*}
\end{proof}

By using Lemma \ref{HaagerupRordam}, Lemma \ref{hmtpyofhomo} 
and Lemma \ref{explog}, 
we prove the following proposition. 

\begin{proposition}
Let $A$ be a unital $C^*$-algebra and let $B\subset A$ be a subalgebra. 
Let $x:[0,1]\times[0,\infty)\to U(B)$ be a continuous map. 
Then there exists a continuous map 
$y:[0,1]\times[0,\infty)\to U(B\otimes\mathcal{O}_\infty)$ such that 
\[
y(0,t)=x(0,t)\otimes1,\quad y(1,t)=x(1,t)\otimes1,\quad 
\Lip(y(\cdot,t))\leq24\pi
\quad\forall t\in[0,\infty)
\]
and 
\[
\limsup_{t\to\infty}\sup_{s\in[0,1]}\lVert[y(s,t),a\otimes1]\rVert
\leq243\limsup_{t\to\infty}\sup_{s\in[0,1]}\lVert[x(s,t),a]\rVert
\quad\forall a\in A. 
\]
\end{proposition}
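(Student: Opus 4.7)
The plan is to construct $y$ directly from the Haagerup--R{\o}rdam continuous path $\hbar$ of Lemma \ref{HaagerupRordam}, applied to the pointwise ``difference'' unitary $u(t):=x(1,t)x(0,t)^*\in U(B)$, and then shifted by right-multiplication so as to match the prescribed endpoints.

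First I would fix a unital embedding $\cO_2\hookrightarrow\cO_\infty$: choose two isometries $R_1,R_2\in\cO_\infty$ with $R_1R_1^*+R_2R_2^*=1$, which exist by pure infiniteness of $\cO_\infty$. This gives a unital inclusion $B\otimes\cO_2\subset B\otimes\cO_\infty$, so paths in $U(B\otimes\cO_2)$ can be viewed in $U(B\otimes\cO_\infty)$. Then set
\[
y(s,t):=\hbar(u(t))(s)\,(x(0,t)\otimes 1)\in U(B\otimes\cO_\infty),\quad u(t):=x(1,t)x(0,t)^*.
\]
The endpoint identities $y(0,t)=x(0,t)\otimes 1$ and $y(1,t)=x(1,t)\otimes 1$ are immediate from Lemma \ref{HaagerupRordam}(1), since $\hbar(u(t))(0)=1$ and $\hbar(u(t))(1)=u(t)\otimes 1$. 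Lemma \ref{HaagerupRordam}(4) ensures the path indeed lies in $U(B\otimes\cO_\infty)$. Joint continuity of $y$ follows from continuity of $t\mapsto u(t)$ together with the Lipschitz dependence of $\hbar(u)(s)$ on $s$ (Lemma \ref{HaagerupRordam}(1)) and on $u$ (Lemma \ref{HaagerupRordam}(3)). The Lipschitz bound in $s$ is preserved by right-multiplication by the unitary $x(0,t)\otimes 1$, giving $\Lip(y(\cdot,t))\leq 8\pi/3\leq 24\pi$.

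For the commutator estimate, the Leibniz rule combined with Lemma \ref{HaagerupRordam}(2) and the identity $\|[u^*,a]\|=\|[u,a]\|$ for unitary $u$ (which follows from $u[u^*,a]u=-[u,a]$ and the fact that conjugation by a unitary is isometric) yields
\[
\|[y(s,t),a\otimes 1]\|\leq 4\|[u(t),a]\|+\|[x(0,t),a]\|\leq 9\sup_{s'\in[0,1]}\|[x(s',t),a]\|,
\]
after bounding $\|[u(t),a]\|\leq\|[x(0,t),a]\|+\|[x(1,t),a]\|$. Taking $\limsup_{t\to\infty}\sup_{s\in[0,1]}$ delivers the required estimate, with constant $9$ comfortably below the stated $243$.

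There is no serious obstacle; the construction is essentially a single application of the Haagerup--R{\o}rdam path. The one point worth verifying carefully is the unital embedding step, so that the ``$1$'' in $\hbar(u(t))(1)=u(t)\otimes 1$ aligns with the ``$1$'' in $x(0,t)\otimes 1$ and the multiplication at $s=1$ telescopes correctly to $x(1,t)\otimes 1$. The constants $24\pi$ and $243$ stated in the proposition are generous compared to the $8\pi/3$ and $9$ produced by this direct argument, which presumably leaves slack for more elaborate equivariant variants required in the companion papers.
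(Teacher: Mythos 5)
Your construction stands or falls on the very first step: ``fix a unital embedding $\cO_2\hookrightarrow\cO_\infty$: choose two isometries $R_1,R_2\in\cO_\infty$ with $R_1R_1^*+R_2R_2^*=1$, which exist by pure infiniteness.'' No such isometries exist. In $K_0(\cO_\infty)\cong\Z$ the class $[1]$ is a generator, and $R_1R_1^*+R_2R_2^*=1$ would give $2[1]=[1]$, i.e.\ $[1]=0$, a contradiction; pure infiniteness only yields isometries whose range projections sum to a proper subprojection of $1$. So there is no unital copy of $\cO_2$ (nor of $M_3(\C)$) inside $\cO_\infty$, and the Haagerup--R{\o}rdam path $\hbar(u(t))$, which by Lemma \ref{HaagerupRordam}(4) lives in $U(B\otimes\cO_2)$, cannot be transported into $U(B\otimes\cO_\infty)$ as you propose. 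Your endpoint, Lipschitz and commutator estimates are fine as computations in $B\otimes\cO_2$, but the target algebra in the statement is $B\otimes\cO_\infty$, and that is exactly where the argument breaks.

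This is not a repairable detail; it is the entire content of the proposition. The target is $\cO_\infty$ rather than $\cO_2$ because in the application (Theorem \ref{refinedNakamura}, used in Lemma \ref{K_12}) only a unital copy of $\cO_\infty$ is available in $(A_\flat)^\alpha$ --- a unital $\cO_2$ there would force $[1]=0$ in $K_0(A_\flat)\cong KK(A,A)$, which fails in general. If a unital $\cO_2\subset\cO_\infty$ existed, the statement would indeed be almost trivial, exactly as your computation suggests. The paper's proof gets around the $K_0$-obstruction by cutting $1_{\cO_\infty}$ into projections $e^{(n)}_1,\dots,e^{(n)}_{m_n}$ whose classes alternate between $\pm[1_{\cO_\infty}]$, so that each adjacent sum $e^{(n)}_j+e^{(n)}_{j+1}$ has trivial class and its corner admits a unital $\cO_2$; the Haagerup--R{\o}rdam paths are then run only inside these corners, with the path $x(\cdot,t)$ sampled at the points $j/m_n$ (uniform continuity on $[0,1]\times[n,n{+}1]$), two staggered pairings of the projections bridging the two endpoints through the sampled values, and a further patching over the intervals $[n,n{+}1]$ plus an $\exp\text{--}\log$ correction in the middle third. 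That machinery is the source of the constants $24\pi$ and $243=3^5$; they are not slack left for equivariant variants. Note also that passing to a non-unital embedding $\cO_2\to p\cO_\infty p$ does not rescue your argument: you would still have to connect $x(0,t)\otimes(1-p)$ to $x(1,t)\otimes(1-p)$ on the complementary corner with controlled Lipschitz constant and commutators, which is the original problem over again.
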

\begin{proof}
Since $x$ is uniformly continuous on $[0,1]\times[n,n{+}1]$, 
we can choose a strictly increasing sequence of natural numbers $(m_n)_n$ 
such that $\lVert x(s_1,t)-x(s_2,t)\rVert\leq1/4$ holds 
for all $(s_1,t),(s_2,t)\in[0,1]\times[n,n{+}1]$ 
with $\lvert s_1-s_2\rvert\leq1/m_n$. 
We may assume that $m_n$ are odd numbers and $m_n$ divides $m_{n+1}$. 
For each $n$, we choose a partition of unity of $\mathcal{O}_\infty$ 
consisting of projections $\{e^{(n)}_1,e^{(n)}_2,\dots,e^{(n)}_{m_n}\}$ 
so that 
\[
K_0(e^{(n)}_j)=\begin{cases}K_0(1_{\mathcal{O}_\infty})&\text{$j$ is odd}\\
-K_0(1_{\mathcal{O}_\infty})&\text{$j$ is even}\end{cases}
\quad\forall j=1,2,\dots,m_n
\]
and 
\[
e^{(n)}_j=\sum_{i=(j-1)k_n+1}^{jk_n}e^{(n+1)}_i
\quad\forall j=1,2,\dots,m_n, 
\]
where $k_n=m_{n+1}/m_n$. 
The class of $e^{(n)}_j+e^{(n)}_{j+1}$ is trivial 
in $K_0(\mathcal{O}_\infty)$, and so 
there exists a unital homomorphism 
\[
\sigma^{(n)}_j:\mathcal{O}_2
\to\left(e^{(n)}_j+e^{(n)}_{j+1}\right)\mathcal{O}_\infty
\left(e^{(n)}_j+e^{(n)}_{j+1}\right)
\]
for $j=1,2,\dots,m_n{-}1$. 
Let 
\[
I_{j,n}=\{(j{-}1)k_n+1+2l\mid l=0,1,\dots,k_n{-}1\}. 
\]
One has 
\[
\sigma^{(n)}_j(1)=e^{(n)}_j+e^{(n)}_{j+1}
=\sum_{i=(j-1)k_n+1}^{(j+1)k_n}e^{(n+1)}_i
=\sum_{i\in I_{j,n}}\sigma^{(n+1)}_i(1). 
\]
For any non-zero projection $e\in\mathcal{O}_\infty$, 
the corner algebra $e\mathcal{O}_\infty e$ is purely infinite and simple, 
and $K_1(e\mathcal{O}_\infty e)=K_1(\mathcal{O}_\infty)=0$. 
Hence $U(e\mathcal{O}_\infty e)=U(e\mathcal{O}_\infty e)_0$. 
Therefore, by Lemma \ref{hmtpyofhomo}, 
there exists a continuous family $(\rho^{(n)}_{j,t})_{t\in[n,n+1]}$ 
of unital homomorphisms from $\mathcal{O}_2$ to 
$\left(e^{(n)}_j+e^{(n)}_{j+1}\right)\mathcal{O}_\infty
\left(e^{(n)}_j+e^{(n)}_{j+1}\right)$ such that 
\[
\rho^{(n)}_{j,n}=\sigma^{(n)}_j,\quad 
\rho^{(n)}_{j,n+1}=\bigoplus_{i\in I_{j,n}}\sigma^{(n+1)}_i. 
\]

We construct a continuous map 
\[
v_n:([0,1/3]\cup[2/3,1])\times[n,n{+}1]\to U(B\otimes\mathcal{O}_\infty)
\]
as follows. 
Let $J_{0,n}=\{2,4,\dots,m_n{-}1\}$ and 
$J_{1,n}=\{1,3,\dots,m_n{-}2\}$. 
For $(s,t)\in[0,1/3]\times[n,n{+}1]$, we set 
\[
v_n(s,t)=x(0,t)\otimes e^{(n)}_1
+\sum_{j\in J_{0,n}}(\id\otimes\rho^{(n)}_{j,t})
\bigl((x(0,t)\otimes1)\hbar(x(0,t)^*x(j/m_n,t))(3s)\bigr). 
\]
For $(s,t)\in[2/3,1]\times[n,n{+}1]$, we set 
\[
v_n(s,t)=\sum_{j\in J_{1,n}}(\id\otimes\rho^{(n)}_{j,t})
\bigl((x(1,t)\otimes1)\hbar(x(1,t)^*x(j/m_n,t))(3{-}3s)\bigr)
+x(1,t)\otimes e^{(n)}_{m_n}. 
\]
By Lemma \ref{HaagerupRordam} (3) and (4), these maps are well-defined. 
By Lemma \ref{HaagerupRordam} (2), one has 
\[
\lVert[v_n(s,t),a\otimes1]\rVert
\leq9\sup_{r\in[0,1]}\lVert[x(r,t),a]\rVert\quad\forall a\in A. 
\]
By Lemma \ref{HaagerupRordam} (1), 
we also have $\Lip(v_n(\cdot,t))\leq8\pi$, $v_n(0,t)=x(0,t)$, 
\[
v_n(1/3,t)=x(0,t)\otimes e^{(n)}_1
+\sum_{j\in J_{0,n}}x(j/m_n,t)\otimes\left(e^{(n)}_j+e^{(n)}_{j+1}\right), 
\]
\[
v_n(2/3,t)=\sum_{j\in J_{1,n}}x(j/m_n,t)\otimes
\left(e^{(n)}_j+e^{(n)}_{j+1}\right)+x(1,t)\otimes e^{(n)}_{m_n}
\]
and $v_n(1,t)=x(1,t)$. 
By the choice of $m_n$, we get 
\[
\left\lVert v_n(1/3,t)-v_n(2/3,t)\right\rVert\leq1/4. 
\]

Next, we would like to patch $(v_n)_n$ together and construct 
$y:([0,1/3]\cup[2/3,1])\times[0,\infty)\to U(B\otimes\mathcal{O}_\infty)$. 
For $s\in[0,1/3]$, we have 
\begin{align*}
v_n(s,n{+}1)&=x(0,n{+}1)\otimes e^{(n)}_1\\
&\! +\sum_{j\in J_{0,n}}\sum_{i\in I_{j,n}}
(\id\otimes\sigma^{(n+1)}_i)
\bigl((x(0,n{+}1)\otimes1)\hbar(x(0,n{+}1)^*x(j/m_n,n{+}1))(3s)\bigr)
\end{align*}
and 
\begin{align*}
v_{n+1}(s,n{+}1)&=x(0,n{+}1)\otimes e^{(n+1)}_1\\
&\! +\sum_{i\in J_{0,n+1}}(\id\otimes\sigma^{(n+1)}_i)
\bigl((x(0,n{+}1)\otimes1)\hbar(x(0,n{+}1)^*x(i/m_{n+1},n{+}1))(3s)\bigr). 
\end{align*}
For each $i\in J_{0,n+1}$, we define $j_i\in\N\cup\{0\}$ as follows. 
For $i=2,4,\dots,k_n{-}1$, let $j_i=0$. 
In this case $i/m_{n+1}<k_n/m_{n+1}=1/m_n$. 
When $k_n<i<m_{n+1}$, let $j_i\in J_{0,n}$ be the number 
satisfying $(j_i{-}1)k_n{+}1\leq i\leq(j_i{+}1)k_n{-}1$. 
Then one has
\[
\frac{j_i{-}1}{m_n}+\frac{1}{m_{n+1}}\leq\frac{i}{m_{n+1}}
\leq\frac{j_i{+}1}{m_n}-\frac{1}{m_{n+1}}, 
\]
which implies 
\[
\left\lvert\frac{i}{m_{n+1}}-\frac{j_i}{m_n}\right\rvert<\frac{1}{m_n}. 
\]
Define continuous functions $f_i:[n,n{+}1]\to[0,1]$ by 
\[
f_i(t)=\begin{cases}j_i/m_n&t=n\\
\text{linear}&n<t<n+1\\ i/m_{n+1}&t=n{+}1. \end{cases}
\]
For $(s,t)\in[0,1/3]\times[n,n{+}1]$, we set 
\begin{align*}
w_n(s,t)&=x(0,n{+}1)\otimes e^{(n+1)}_1\\
&\quad+\sum_{i\in J_{0,n+1}}(\id\otimes\sigma^{(n+1)}_i)
\bigl((x(0,n{+}1)\otimes1)\hbar(x(0,n{+}1)^*x(f_i(t),n{+}1))(3s)\bigr). 
\end{align*}
Then $w_n(s,n)=v_n(s,n{+}1)$, $w_n(s,n{+}1)=v_{n+1}(s,n{+}1)$, 
$\Lip(w_n(\cdot,t))\leq8\pi$ and 
\[
\lVert[w_n(s,t),a\otimes1]\rVert
\leq9\sup_{r\in[0,1]}\lVert[x(r,n{+}1),a]\rVert\quad\forall a\in A. 
\]
Moreover, we can see $\lVert w_n(1/3,t)-v_n(1/3,n{+}1)\rVert\leq1/4$. 
Define a continuous map 
$y:[0,1/3]\times[0,\infty)\to U(B\otimes\mathcal{O}_\infty)$ by 
\[
y(s,t)=w_n(s,t)v_n(s,n{+}1)^*v_n(s,t)
\]
for $(s,t)\in[0,1/3]\times[n,n{+}1]$. 
This is well-defined, and $y(0,t)=v_n(0,t)=x(0,t)$, 
$\Lip(y(\cdot,t))\leq24\pi$ and 
\[
\limsup_{t\to\infty}
\sup_{s\in[0,1/3]}\lVert[y(s,t),a\otimes1]\rVert
\leq27\limsup_{t\to\infty}\sup_{r\in[0,1]}\lVert[x(r,t),a]\rVert
\quad\forall a\in A. 
\]
Furthermore, we get $\lVert y(1/3,t)-v_n(1/3,t)\rVert\leq1/4$. 

In the same way, we can construct 
$w_n:[2/3,1]\times[n,n{+}1]\to U(B\otimes\mathcal{O}_\infty)$ and 
define $y:[2/3,1]\times[0,\infty)\to U(B\otimes\mathcal{O}_\infty)$ 
satisfying $y(1,t)=x(1,t)$, $\Lip(y(\cdot,t))\leq24\pi$ and 
\[
\limsup_{t\to\infty}
\sup_{s\in[2/3,1]}\lVert[y(s,t),a\otimes1]\rVert
\leq27\limsup_{t\to\infty}\sup_{r\in[0,1]}\lVert[x(r,t),a]\rVert
\quad\forall a\in A. 
\]
Furthermore, one has $\lVert y(2/3,t)-v_n(2/3,t)\rVert\leq1/4$. 
Then 
\begin{align*}
&\lVert y(1/3,t)-y(2/3,t)\rVert\\
&\leq\lVert y(1/3,t)-v_n(1/3,t)\rVert
+\lVert v_n(1/3,t)-v_n(2/3,t)\rVert+\lVert v_n(2/3,t)-y(2/3,t)\rVert\\
&\leq3/4. 
\end{align*}
For $(s,t)\in[1/3,2/3]\times[0,\infty)$, we set 
\[
y(s,t)=y(1/3,t)\exp((3s{-}1)\log(y(1/3,t)^*y(2/3,t)))
\in U(B\otimes\mathcal{O}_\infty). 
\]
On the interval $[1/3,2/3]$, we have $\Lip(y(\cdot,t))<3\pi/2<24\pi$. 
By Lemma \ref{explog}, 
\[
\limsup_{t\to\infty}
\sup_{s\in[0,1]}\lVert[y(s,t),a\otimes1]\rVert
\leq243\limsup_{t\to\infty}\sup_{r\in[0,1]}\lVert[x(r,t),a]\rVert
\quad\forall a\in A. 
\]
Hence the proof is complete. 
\end{proof}

For a closed ideal $J$ of a unital $C^*$-algebra $A$, we denote 
$$U(J)=\{u\in U(A);\; u-1\in J\}.$$
In the next theorem, we replace $[0,1)$ with $[0,\infty)$ in the definitions of 
$A^\flat$ and $A_\flat$.  

\begin{theorem}\label{refinedNakamura}
Let $A$ be a unital separable $C^*$-algebra and let $J\subset A$ be a closed ideal. 
Let $(\alpha,u):G\curvearrowright A$ be a cocycle action 
of a countable discrete group $G$. 
Note that $\alpha$ induces a $G$-action on $A_\flat$, and we denote by 
$(A_\flat)^\alpha$ the fixed point subalgebra of $A_\flat$ for this action. 
Suppose that $(A_\flat)^\alpha$ contains 
a unital copy of $\mathcal{O}_\infty$. 
For any continuous map $x:[0,1]\times[0,\infty)\to U(J)$, 
there exists a continuous map 
$y:[0,1]\times[0,\infty)\to U(J)$ such that 
\[
y(0,t)=x(0,t),\quad y(1,t)=x(1,t),\quad \Lip(y(\cdot,t))<25\pi
\quad\forall t\in[0,\infty), 
\]
\[
\limsup_{t\to\infty}\sup_{s\in[0,1]}\lVert[y(s,t),a]\rVert
\leq243\limsup_{t\to\infty}\sup_{s\in[0,1]}\lVert[x(s,t),a]\rVert
\quad\forall a\in A
\]
and 
\[
\limsup_{t\to\infty}\sup_{s\in[0,1]}\lVert\alpha_g(y(s,t))-y(s,t)\rVert
\leq243\limsup_{t\to\infty}\sup_{s\in[0,1]}\lVert\alpha_g(x(s,t))-x(s,t)\rVert
\quad\forall g\in G. 
\]
\end{theorem}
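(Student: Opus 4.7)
The plan is to reduce to the preceding Proposition by \emph{internalizing} the $\mathcal{O}_\infty$-factor inside $(A_\flat)^\alpha$. The Proposition produces a path $y'$ in $U(J\otimes\mathcal{O}_\infty)$ with the correct endpoint, Lipschitz, and commutator estimates; by pushing the external $\mathcal{O}_\infty$-factor through the unital embedding $\iota:\mathcal{O}_\infty\hookrightarrow(A_\flat)^\alpha$ we will obtain $y$ in $U(J)$, and the fixedness of $\iota(\mathcal{O}_\infty)$ under the induced $G$-action will automatically yield the approximate $\alpha$-equivariance.

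First, fix such an $\iota$ and lift it to a norm-continuous map $\tilde\iota:\mathcal{O}_\infty\to C^b([0,\infty),A)$ via the Bartle--Graves selection theorem (exactly as in the proof of Lemma \ref{lift}(1)). Because $\iota$ lands in $(A_\flat)^\alpha\subset A^\flat\cap A'$, for every $a,b\in\mathcal{O}_\infty$, $c\in A$ and $g\in G$ one has
\[
\lim_{t\to\infty}\lVert\tilde\iota(ab)(t)-\tilde\iota(a)(t)\tilde\iota(b)(t)\rVert
=\lim_{t\to\infty}\lVert[\tilde\iota(a)(t),c]\rVert
=\lim_{t\to\infty}\lVert\alpha_g(\tilde\iota(a)(t))-\tilde\iota(a)(t)\rVert=0.
\]
Next, apply the preceding Proposition with $B=J$ to obtain $y':[0,1]\times[0,\infty)\to U(J\otimes\mathcal{O}_\infty)$ satisfying the three stated conclusions. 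An inspection of its proof shows that $y'(s,t)$ is built from $x(0,t)$, $x(j/m_n,t)$, $x(1,t)$ through the Haagerup--R{\o}rdam path $\hbar$ and certain fixed elements of $\mathcal{O}_\infty$, and that $\hbar$ is functorial under any automorphism of $A$ acting trivially on $\mathcal{O}_\infty$; hence $(\alpha_g\otimes\id)(y'(s,t))$ is obtained from $y'(s,t)$ by replacing each $x(r,t)$ with $\alpha_g(x(r,t))$, and Lemma \ref{HaagerupRordam}(3) gives
\[
\limsup_{t\to\infty}\sup_s\lVert(\alpha_g\otimes\id)(y'(s,t))-y'(s,t)\rVert
\leq 243\limsup_{t\to\infty}\sup_s\lVert\alpha_g(x(s,t))-x(s,t)\rVert,
\]
by exactly the same combinatorics as the commutator bound with $a\otimes 1$.

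Third, internalize. The embedding $\iota$ defines a unital $*$-homomorphism $\Phi:J\otimes\mathcal{O}_\infty\to J^\flat$ by $\Phi(j\otimes a)=j\cdot\iota(a)$, which is well defined because $\iota(\mathcal{O}_\infty)$ commutes with $A\supset J$ in $A^\flat$ and because $J$ is an ideal. Concretely, for $t$ beyond some threshold $T$ we define $y(s,t)$ as the unitarization (by polar decomposition) of the element obtained from the explicit formula for $y'(s,t)$ in the proof of the Proposition by replacing each fixed $\mathcal{O}_\infty$-generator $o$ by $\tilde\iota(o)(t)\in A$; asymptotic multiplicativity of $\tilde\iota$ makes this substitute close to a unitary in $1+J$. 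For $t\in[0,T]$ we glue to an ad~hoc continuous path in $U(J)$ from $x(0,t)$ to $x(1,t)$ obtained by applying $\hbar$ internally (using a unital copy $\mathcal{O}_2\subset\mathcal{O}_\infty\subset(A_\flat)^\alpha$), so that the total Lipschitz constant remains under $25\pi$. The endpoint conditions hold because $y'(\epsilon,t)=x(\epsilon,t)\otimes 1$ for $\epsilon=0,1$ and $\Phi$ sends $x(\epsilon,t)\otimes 1$ to $x(\epsilon,t)\cdot\iota(1)=x(\epsilon,t)$; the commutator and $\alpha$-invariance bounds inherit the constant $243$ from Step~2, picking up only $o(1)$ terms as $t\to\infty$ coming from the asymptotic centrality and $\alpha$-fixedness of $\tilde\iota$.

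The main obstacle is the \emph{uniform} (not merely asymptotic) control of $\Lip(y(\cdot,t))<25\pi$ across the transitional regime where $\tilde\iota$ has not yet become close to a $*$-homomorphism. The extra Lipschitz slack $25\pi-24\pi=\pi$ must absorb both the polar-decomposition correction in the asymptotic regime and the Haagerup--R{\o}rdam patching in the initial segment, which forces one to glue carefully at $t=T$ using the uniform Lipschitz bound $8\pi/3$ of Lemma \ref{HaagerupRordam}(1). A secondary nuisance is verifying the equivariance of the Proposition's construction from the explicit formula for $\hbar$, so that no additional constants are introduced beyond the $243$ already present.
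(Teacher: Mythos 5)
Your overall strategy (apply the Proposition, then internalize the external $\mathcal{O}_\infty$ via the embedding into $(A_\flat)^\alpha$) is the right one, and your route to the $\alpha$-estimate differs from the paper's in an interesting way: you argue that the Proposition's construction is natural in $x$ under $\alpha_g\otimes\id$ and re-run the estimates with differences in place of commutators (which would also require a difference analogue of Lemma \ref{explog}, obtainable by a $2\times2$ matrix trick), whereas the paper gets the equivariance bound for free by applying the Proposition with ambient algebra $A\rtimes_{(\alpha,u)}G$, so that $\lVert\alpha_g(y)-y\rVert$ is literally a commutator estimate with the canonical unitaries. Your route is workable, but it is asserted rather than carried out, and it is not where the real difficulty lies.

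The genuine gap is in your internalization step. You propose to substitute $\tilde\iota(o)(t)$, at the \emph{same} parameter $t$, for each $\mathcal{O}_\infty$-element $o$ occurring in the formula for $y'(s,t)$, and to claim the errors are $o(1)$ ``for $t$ beyond some threshold $T$''. No such threshold exists: the Bartle--Graves lift $\tilde\iota$ is asymptotically multiplicative, central, and $\alpha$-invariant only on each \emph{fixed} finite subset of $\mathcal{O}_\infty$ and against each \emph{fixed} element of $A$, while the elements needed at time $t$ change with $t$ and proliferate (the partitions $\{e^{(n)}_j\}_{j\le m_n}$ with $m_n\to\infty$, the homotopies $\rho^{(n)}_{j,t}$, and the $t$-dependent elements $x(s,t)\in J$ they must almost commute with). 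One must decouple the internal asymptotic variable from $t$ and run a diagonal/reparametrization argument; this is exactly the content of the paper's second half, where $\phi\circ y$ is an exact unitary path in $U(J^\flat)$, lifted to a three-variable function $\hat y(s,t,r)$ and evaluated at a rapidly increasing $r(t)$, with the endpoint conditions restored by a small perturbation valid for all large $r$. Your fallback for $t\in[0,T]$ is also not available: ``applying $\hbar$ internally'' needs a unital copy of $\mathcal{O}_2$ inside $A$ at finite times, which the hypotheses do not provide (the $\mathcal{O}_\infty$ lives only in $(A_\flat)^\alpha$), and a uniformly Lipschitz path in $U(J)$ from $x(0,t)$ to $x(1,t)$ at finite $t$ is not automatic; the paper's construction avoids any finite-$t$ gluing altogether, which is also how it keeps $\Lip(y(\cdot,t))<25\pi$ for \emph{all} $t$ --- precisely the point you flag as unresolved.
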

\begin{proof}
Applying the proposition above to 
$J\subset A\rtimes_{(\alpha,u)}G$ and $x$, 
we get a continuous map 
$y:[0,1]\times[0,\infty)\to U(J\otimes\mathcal{O}_\infty)$ such that 
\[
y(0,t)=x(0,t)\otimes1,\quad y(1,t)=x(1,t)\otimes1,\quad 
\Lip(y(\cdot,t))\leq24\pi
\quad\forall t\in[0,\infty)
\]
and 
\[
\limsup_{t\to\infty}\sup_{s\in[0,1]}\lVert[y(s,t),a\otimes1]\rVert
\leq243\limsup_{t\to\infty}\sup_{s\in[0,1]}\lVert[x(s,t),a]\rVert
\quad\forall a\in A\rtimes_{(\alpha,u)}G. 
\]
Letting $\iota:\mathcal{O}_\infty\to(A_\flat)^\alpha$ be a unital embedding, 
we define a unital embedding 
\[
\phi:(A\rtimes_{(\alpha,u)}G)\otimes\mathcal{O}_\infty
\to(A\rtimes_{(\alpha,u)}G)^\flat
\]
by $\phi(a\otimes1)=a$ for $a\in A\rtimes_{(\alpha,u)}G$ and 
$\phi(1\otimes b)=\iota(b)$ for $b\in\mathcal{O}_\infty$. 
Then, we have 
\[
\phi(y(0,t))=x(0,t),\quad \phi(y(1,t))=x(1,t),\quad 
\Lip(\phi(y(\cdot,t)))\leq24\pi
\quad\forall t\in[0,\infty)
\]
and 
\[
\limsup_{t\to\infty}\sup_{s\in[0,1]}\lVert[\phi(y(s,t)),a]\rVert
\leq243\limsup_{t\to\infty}\sup_{s\in[0,1]}\lVert[x(s,t),a]\rVert
\quad\forall a\in A\rtimes_{(\alpha,u)}G. 
\]
Notice that $\phi(y(s,t))$ is in $U(J^\flat)$, 
because $J$ is an ideal of $A$. 

We would like to find a lift 
$\hat y:[0,1]\times[0,\infty)\to U(C^b([0,\infty),J))$ 
of $\phi(y(\cdot,\cdot)):[0,1]\times[0,\infty)\to U(J^\flat)$. 
For each $n\in\N\cup\{0\}$, 
the restriction of $\phi\circ y$ to $[0,1]\times[n,n{+}1]$ is regarded 
as an element of $U(C([0,1]\times[n,n{+}1])\otimes J^\flat)$, 
and so there exists a lift 
$\hat y_n:[0,1]\times[n,n{+}1]\to U(C^b([0,\infty),J))$. 
Inductively we define $\hat y$ by 
$\hat y(s,t)=\hat y_0(s,t)$ for $(s,t)\in[0,1]\times[0,1]$ and 
\[
\hat y(s,t)=\hat y(s,n)\hat y_n(s,n)^*\hat y_n(s,t)
\quad\forall (s,t)\in[0,1]\times(n,n{+}1]. 
\]
Then $\hat y:[0,1]\times[0,\infty)\to U(C^b([0,\infty),J))$ 
is a lift of $\phi\circ y$. 
We treat $\hat y$ as a continuous function 
in three variables $(s,t,r)\in[0,1]\times[0,\infty)\times[0,\infty)$. 

For any $t\in[0,\infty)$, we get 
\[
\lim_{r\to\infty}\hat y(0,t,r)=x(0,t),\quad 
\lim_{r\to\infty}\hat y(1,t,r)=x(1,t)
\]
and 
\[
\limsup_{r\to\infty}\Lip(\hat y(\cdot,t,r))<25\pi. 
\]
By a simple perturbation, 
we may assume that $\hat y(0,t,r)=x(0,t)$ and $\hat y(1,t,r)=x(1,t)$ hold 
for any sufficiently large $r>0$. 
Moreover, we have 
\[
\limsup_{t\to\infty}\sup_{s\in[0,1]}\limsup_{r\to\infty}
\lVert[\hat y(s,t,r)),a]\rVert
\leq243\limsup_{t\to\infty}\sup_{s\in[0,1]}\lVert[x(s,t),a]\rVert
\]
for all $a\in A\rtimes_{(\alpha,u)}G$. 
Hence, 
by choosing a rapidly increasing function $r:[0,\infty)\to[0,\infty)$ 
in a suitable way, one obtains 
\[
\limsup_{t\to\infty}\sup_{s\in[0,1]}\lVert[\hat y(s,t,r(t))),a]\rVert
\leq243\limsup_{t\to\infty}\sup_{s\in[0,1]}\lVert[x(s,t),a]\rVert
\]
for all $a\in A\rtimes_{(\alpha,u)}G$. 
Thus, the map $(s,t)\mapsto\hat y(s,t,r(t))$ does the job. 
\end{proof}


\end{document}